\def \ms{\medskip}
\def \bs{\bigskip}
\def\boxx{\theta}
\def\ts{\textstyle}
\def\Sym{\operatorname{Sym}}
\def\kk{{\pmb k}}
\def\im{\operatorname{im}}
\def\w{\wedge}
\def\ann{\operatorname{ann}}
\def\p{\oplus}
\def\Hom{\operatorname{Hom}}
\def\mult{\operatorname{mult}}
\def\t{\otimes}
\def\a{\alpha}
\def\bw{\bigwedge}
\def\rank{\operatorname{rank}}
\def\e{\epsilon}
\newtheorem{theorem}{Theorem}[section]
\newtheorem{lemma}[theorem]{Lemma}
\newtheorem{observation}[theorem]{Observation}
\newtheorem{proposition}[theorem]{Proposition}
\newtheorem{claim-no-advance}[equation]{Claim}
\theoremstyle{definition}
\newtheorem{conventions}[theorem]{Conventions}
\newtheorem{data}[theorem]{Data}
\newtheorem{notation}[theorem]{Notation}
\newtheorem{definition}[theorem]{Definition}
\newtheorem{Cases}[theorem]{Cases}
\newtheorem{chunk}[theorem]{}
\newtheorem{chunk-no-advance}[equation]{}
\newtheorem{example}[theorem]{Example}
\newtheorem{remark}[theorem]{Remark}
\newtheorem*{Remark}{Remark}
\numberwithin{equation}{theorem}
\begin{document}

\baselineskip=16pt

\title[Weak Lefschetz property and socle degree three]{The weak Lefschetz property for  Standard graded, Artinian Gorenstein  algebras of embedding dimension four and  socle degree three}

\date{\today}

\author[Andrew R. Kustin]
{Andrew R. Kustin}

\address{Department of Mathematics\\ University of South Carolina\\\newline
Columbia SC 29208\\ U.S.A.} \email{kustin\@math.sc.edu}

\subjclass[2010]{13C40, 13D02, 13H10, 13E10, 13A02}

\keywords{Artinian algebra,  
Gorenstein algebra,  Macaulay inverse system,    weak Lefschetz property.}

\thanks{We made extensive use of   Macaulay2 \cite{M2}. We are very appreciative of this computer algebra system.}

\begin{abstract} Let $\kk$ be an arbitrary field and  $A$ be  a standard graded
Artinian Gorenstein  
$\kk$-algebra of embedding dimension four and  socle degree three.
Then, except for exactly one exception, $A$ has the weak Lefschetz property. Furthermore, the exception occurs only in characteristic two.
 \end{abstract}

\maketitle

\tableofcontents

\section{Introduction.}

The Lefschetz property is a ring-theoretic abstraction of the Hard Lefschetz
Theorem for compact K\"ahler manifolds.
Let $\kk$ be a field. A graded $\kk$-algebra $A$, equal to $\bigoplus_{i=0}^sA_i$, has the weak Lefschetz property if there is a linear form $\ell\in A_1$ so that  multiplication by $\ell$ from $A_i$ to $A_{i+1}$ has maximal rank for each $i$.
(Similarly, $A$
has the strong Lefschetz property  if multiplication by $\ell^s$ has maximal
rank in each degree for every positive integer $s$ for some $\ell\in A_1$.) Stanley introduced the concept in \cite{S80} where he proved that if the characteristic of $\kk$ is zero, $P=\kk[x_1,\dots,x_n]$, and $A=P/(x_1^{a_1},\dots,x_n^{a_n})$, then $A$ has the strong Lefschetz property. Other early proofs of Stanley's theorem were also given by \cite{RRR91} and \cite{W87}.

Our interest in the weak Lefschetz property for fields of arbitrary characteristic is a consequence of our desire to record 
 the minimal resolution of $A$ by free $P$-modules in terms of the coefficients of the Macaulay inverse system which determines $A$. See \cite{EKK1}, \cite{EKK2}, and especially \cite{EKK3}, 
where this project is carried out for 
 quotient rings $A$ with Castelnuovo regularity two as a $P$-module, and  \cite{qp} for quotient rings $A$ of 
 regularity three 
 in three variables.
(We also have results along these lines pertaining to ideals of 
regularity three in four and five variables.) 
We are able to carry out this project  
provided $A$ has the weak Lefschetz property, independent of the characteristic of $\kk$.
The absence of the weak Lefschetz property is an obstruction to  this project; but positive characteristic, in and of itself, does not  cause any problem.

\bs The weak Lefschetz property is very sensitive to change in characteristic. 
In positive characteristic $p$, 
Brenner and 
 Kaid \cite{BK11} 
 gave an explicit description of those $d$ and $p$ for which  
$P/(x^d_1 , x^d_2 ,\dots, x^d_n)$ has the weak Lefschetz property 
 when $P$ equals $\kk[x_1,\dots,x_n]$, $\kk$ is a field of characteristic $p$, and $n=3$.
The paper \cite{KV14}  is devoted to the analogous project for $4\le n$. 

\bs Artinian rings with socle degree three are somewhat mysterious. 
 B{\o}gvad's \cite{B} examples of Artinian Gorenstein rings with transcendental Poincar\'e series have socle degree three. Rossi and \c Sega \cite{RS14} prove that if $R$ is a compressed Artinian Gorenstein local ring with 
socle degree not equal to three, then the Poincar\'e series of all finitely generated $R$-modules  are rational, sharing a common
denominator. Similarly, it is shown in \cite{KSV18} that if $R$ is a compressed local Artinian ring
with odd top socle degree  at least five, then the Poincar\'e
series of all finitely generated $R$-modules are rational, sharing a common denominator. 
The same conclusion does not hold when the top socle degree is three.

\bs 

Theorem~\ref{main} is the main result in the paper.

\begin{theorem}
\label{main}Let $\kk$ be a field 
 and $A$ be a standard graded Artinian Gorenstein $\kk$-algebra of embedding dimension four and socle degree three.
If the characteristic of $\kk$ is different than two, then $A$ has the weak Lefschetz property. 
If the characteristic of $A$ is equal to two, then  
$A$ has the weak Lefschetz property
if and only if $A$ is not isomorphic to
\begin{equation}\label{1.1.1}\frac{\kk[x,y,z,w]}{(xy,xz,xw,y^2,z^2,w^2,x^3+yzw)}.\end{equation}
\end{theorem}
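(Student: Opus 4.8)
The plan is to reduce the statement to a finite computation by first normalizing the Macaulay inverse system. A standard graded Artinian Gorenstein $\kk$-algebra $A$ of embedding dimension four and socle degree three is $A = P/\ann_P(F)$, where $P = \kk[x,y,z,w]$ acts on the divided power (or polynomial, in characteristic zero) algebra by differentiation and $F$ is a cubic form in the dual variables. The Hilbert function of $A$ is forced to be $(1,4,4,1)$. The weak Lefschetz property for such an $A$ amounts to a single condition: there must exist a linear form $\ell$ so that $\times\ell\colon A_1\to A_2$ is an isomorphism (the maps $A_0\to A_1$ and $A_2\to A_3$ automatically have maximal rank, being injective and surjective respectively on account of the symmetry of the Hilbert function and $\times\ell$ being dual to $\times\ell$ in the Gorenstein pairing). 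So the entire problem is: for which cubics $F$ does every linear form fail to give an isomorphism $A_1 \xrightarrow{\ell} A_2$?

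First I would set up the coordinates: $A_1$ and $A_2$ are both four-dimensional, and the multiplication map $A_1 \to A_2$ by a general linear form $\ell = ax+by+cz+dw$ is given by a $4\times 4$ matrix whose entries are linear in $(a,b,c,d)$ and whose coefficients are (up to the duality identification) read off from the partial second derivatives of $F$, i.e.\ from the catalecticant-type data of $F$. The determinant of this matrix is a quartic form in $(a,b,c,d)$ — call it $\det_F$ — and $A$ fails WLP precisely when $\det_F$ is the zero polynomial. Next I would invoke the classification of cubic forms in four variables up to the action of $\mathrm{GL}_4$ — this is the classical classification of plane cubic curves together with the degenerate cases (and one must be careful that this classification is characteristic-dependent, especially in characteristics two and three). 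For each normal form $F$ in the list, one computes $\det_F$ and checks whether it vanishes identically; since $A\cong A'$ iff the corresponding cubics are $\mathrm{GL}_4$-equivalent, this is a complete and finite check. This is exactly the kind of computation for which Macaulay2 (acknowledged in the paper) is used.

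The main obstacle — and where the real content lies — is handling the characteristic-dependent part of the classification and the characteristic-two anomaly. In characteristic zero or large characteristic, one expects $\det_F$ to vanish identically only for very degenerate $F$ (e.g.\ $F$ a cubic in fewer than four variables, which would lower the embedding dimension and is therefore excluded), so that WLP holds across the board; the delicate work is to show that over small-characteristic fields, and over non-algebraically-closed fields where the classification has more orbits, the only new degeneracy producing an identically-zero determinant is the single cubic $F$ dual to $x^3 + yzw$ in characteristic two, corresponding to the ring in \eqref{1.1.1}. I would verify directly that for the presentation in \eqref{1.1.1} every linear form $\ell$ has $\times\ell\colon A_1\to A_2$ of rank at most three: the relations $xy=xz=xw=0$ and $y^2=z^2=w^2=0$ make the multiplication table extremely sparse, and in characteristic two the term $x^3 + yzw$ forces a nontrivial kernel for every choice of $\ell$ (one checks the $4\times 4$ determinant is a sum of squares that collapses mod $2$). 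Conversely, for all other rings in the classification I would exhibit an explicit $\ell$ with nonzero determinant, or argue that $\det_F \not\equiv 0$ by producing one point where it is nonzero.

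One technical point deserving care: the statement is over an arbitrary field, not an algebraically closed one, so one cannot simply quote the geometric classification of plane cubics. I would handle this by noting that WLP is detected by the non-vanishing of the polynomial $\det_F$, which is an algebraic condition stable under field extension; hence $A$ over $\kk$ has WLP if and only if $A\otimes_\kk \bar\kk$ does, reducing to the algebraically closed case for the positive results, while the single exceptional ring \eqref{1.1.1} is already defined over the prime field $\mathbb{F}_2$ and its failure of WLP is checked directly there. The remaining effort is organizational: enumerating the orbit representatives (with attention to characteristics two and three), and for each one either certifying WLP or recognizing it as a ring isomorphic to \eqref{1.1.1}.
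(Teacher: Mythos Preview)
Your reduction to the vanishing of a quartic polynomial $\det_F$ in the coefficients of $\ell$ is exactly right, and is essentially the paper's map $\Gamma_{\phi_3}$; the field-extension stability argument is also correct and parallels the paper's Observation~\ref{28.9}. But the core of your plan has a genuine gap: there is no finite classification of cubic forms in four variables up to $\mathrm{GL}_4$. Cubic forms in four variables cut out cubic \emph{surfaces} in $\mathbb{P}^3$, not plane cubic curves, and smooth cubic surfaces already have a four-dimensional moduli space (the space of quaternary cubics has dimension $20$, and $\mathrm{GL}_4$ has dimension $16$). Even ternary cubics are not a finite list (the $j$-invariant). So the step ``enumerate orbit representatives and check $\det_F$ on each'' cannot be carried out as stated.

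What the paper does instead is only a \emph{partial} normalization: Lemma~\ref{61.1} puts $\phi_3$ into one of the shapes $x^{*(3)}+x^*\phi_{2,0}+\phi_{3,0}$ or $x^{*(2)}y^*+x^*\phi_{2,0}+\phi_{3,0}$ (plus a trivial characteristic-two case), still carrying a dozen or so free parameters. The proof then assumes $\Gamma_{\phi_3}\equiv 0$ and evaluates $\Gamma_{\phi_3}$ at carefully chosen monomials of $D_4U$; each evaluation yields a polynomial relation among the parameters. These relations are solved by hand (Sections~\ref{cubic,r=2}--\ref{nocubic,other}) to force either the exceptional form $x^{*(3)}+y^*z^*w^*$ in characteristic two, or a linear dependence among $\ell\phi_3$ for $\ell$ ranging over a basis, contradicting the embedding-dimension hypothesis. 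In characteristic zero this is essentially a hands-on proof of the Gordan--Noether theorem for $n=4$ (vanishing Hessian implies a cone), redone with divided powers so that it works in every characteristic; the paper discusses this connection explicitly. Your proposal would need to replace the nonexistent orbit enumeration with an argument of this type.
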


\bs We first demonstrate that the $\kk$-algebra $A$ of (\ref{1.1.1}) does not satisfy the weak Lefschetz property. Indeed, let $\ell=ax+by+cz+dw$ be an arbitrary nonzero linear form in $A$. If at least one of the parameters $b$, $c$, or $d$ is nonzero, then let $\ell'$ be the nonzero linear form $\ell'=by+cz+dw\in A_1$. Observe that $$\ell \ell'=ax (by+cz+dw)+b^2y^2+c^2z^2+d^2w^2=0\in A.$$ 
 If $b=c=d=0$, then let $\ell'$ be the nonzero linear form $\ell'=y$ of $A_1$.
Observe that 
$\ell\ell'=axy=0$ in $A$.
In either case, the arbitrary nonzero linear form $\ell$ is zero divisor on $A$.

\bs In the rest of the paper we show that if $A$ satisfies the hypotheses of Theorem~\ref{main}, but is not  isomorphic to the ring of (\ref{1.1.1}), then $A$ has the weak Lefschetz property. We use a Macaulay inverse system for $A$.

Let $U$ be the vector space $A_1$, $P$ be the polynomial ring $P=\Sym_\bullet U$, $I$ be a homogeneous ideal of $P$ with $A$ isomorphic to $P/I$, $U^*$ be the dual space $\Hom_{\kk}(U,\kk)$ of $U$,  and $D_\bullet U^*$ be the divided power $\kk$-algebra $\bigoplus_{0\le i} D_iU^*$, with $$D_iU^*=\Hom_{\kk}(\Sym_i U,\kk).$$
The rules for a divided power algebra are recorded in \cite[Section~7]{GL} or \cite[Appendix 2]{Eis}. (In practice these rules say that $w^{(n)}$ behaves like $w^n/(n!)$ would behave if $n!$ were a unit in $R$.)

Macaulay duality guarantees that 
$$\ann_{D_\bullet U^*}I$$ is a cyclic $P$-submodule of $D_\bullet U^*$ generated by an element 
in $D_3U^*$. (Any generator 
$\ann_{D_\bullet U^*}I$
in $D_3U^*$ 
is called 
a Macaulay inverse system for $A$.) Furthermore, if $\phi_3$ is a Macaulay inverse system for $A$, then
$$I=\ann_P \phi_3.$$The hypothesis that $A$ has embedding dimension four ensures that
\begin{equation}\label{non-trivial}\ell \phi_3\neq 0\quad\text{for any nonzero $\ell$ in $U$.}\end{equation}
We observe that the Macaulay inverse system for the $\kk$-algebra $A$ of (\ref{1.1.1}) is \begin{equation}\label{evil}\phi_3=x^{*(3)}+y^*z^*w^*.\end{equation}

\bs In section~\ref{convenient} we put the Macaulay inverse system into the form
\begin{align}\a\phi_3={}&x^{*(3)}+x^*\phi_{2,0}+\phi_{3,0} \label{form1}
\intertext{or}
\a\phi_3={}&x^{*(2)}y^*+x^*\phi_{2,0}+\phi_{3,0}, \label{form2}
\end{align}
for some unit $\a$,
with $\phi_{i,0}\in D_i(\kk y^*\p \kk z^*\p\kk w^*)$.
We study various cases depending on whether $\phi_3$ has form (\ref{form1}) or (\ref{form2}) and also depending on how complicated $\phi_{2,0}$ is. We treat the four significant cases in Sections \ref{cubic,r=2}, \ref{cubic,r=1}, \ref{nocubic,and}, and \ref{nocubic,other}.
 All of our calculations employ the homomorphism $$\ts \Gamma_{\phi_3}:D_dU\t \bw^d U\to \bw^dU^*$$ which is introduced in section \ref{Gamma}. The connection between $\Gamma_{\phi_3}$ and the weak Lefschetz property is explained in Lemma~\ref{28.10}. Roughly speaking, 
$$\Gamma_{\phi_3}(\ell^{(d)}\t x_1\w\ldots\w x_d)=\ell x_1\phi_3\w \ldots\w \ell x_d\phi_3\in \ts\bw^dU^*,$$ 
for $\ell,x_1,\ldots,x_d\in U$ and $\phi_3\in U^*$. It is shown in Lemma~\ref{28.10} that if $\phi_3$ is a Macaulay inverse system for $A$ and $\ell\phi_3$ is nonzero for all nonzero $\ell$ in $U$, then
\begin{equation}\notag \text{$A$ has the weak Lefschetz property} \iff \Gamma_{\phi_3} \text{ is not identically zero.}\end{equation} The hypothesis that $\ell\phi_3$ is nonzero whenever $\ell$ in $U$ is nonzero is innocuous. It merely says that the embedding dimension of $A$ is equal to the number of variables of $P$. If this hypothesis is not satisfied, then one can view $A$ as a quotient of a polynomial ring with one fewer variable than $P$ has.

\ms 
Traditionally, the 
Lefschetz properties are studied in a graded $\kk$-algebra where 
$\kk$ is a  field 
 of characteristic zero.
In particular, for example,  Gondim and Zappala \cite[Cor. 5.5]{GZ19} have proven  that a standard graded Gorenstein $\kk$-algebra of small codimension, with socle degree three, and presented by quadrics, has the weak Lefschetz property, provided the field $\kk$ has characteristic zero. 
 Duality is obtained using the algebra of differential operators  $Q = \kk[\frac{\partial }{\partial x_1},\dots,\frac{\partial }{\partial x_n}]$.
Every graded Artinian Gorenstein algebra $A$ has a 
 presentation of the form 
 \begin{equation}\label{A_F}A\cong Q/\ann_QF,\end{equation}
for some $n$ and some homogeneous polynomial $F\in \kk[x_1, . . . , x_n]$, where $$\ann_QF=\{\phi\in Q 
\mid \phi(F(x)) = 0\}.$$
The Hessian 
of the form $F$ is the determinant of the  square matrix $(\frac{\partial^2 F}{\partial x_i\partial x_j})$ of  second order partial derivatives of $F$.
 When $\kk$ has characteristic zero, Watanabe (see \cite{W00,MW09}) has shown that 
(\ref{A_F}) fails to have  the Strong Lefschetz Property if and only if one of the
non-trivial higher Hessians of $F$ vanishes. 
This result has been generalized to the weak Lefschetz property using ``mixed
Hessians'', see \cite{GZ19}.

The study of when the Hessian of a homogeneous form vanishes has a very long history and continues to be active. For the time being keep $\kk$ a field of characteristic zero and let $F$ be a homogeneous form in $\kk[x_1,\dots,x_n]$.
Hesse \cite{H51,H59} believed that the Hessian of 
$F$ vanishes if and only if the projective variety $X$, defined by $F$, 
is a cone.
 However, Gordan and Noether \cite{GN76}
proved that while Hesse's claim is true when the degree of $F$ is $2$ or $n \le 4$, it is false for
$5\le n$ and for forms of degree at least three. 
Gordan and Noether realized that $X$ being a cone
is equivalent to the condition that the partial derivatives of $F$ are $\kk$-linearly dependent,
while $F$ has vanishing Hessian if and only if the partial derivatives of $F$
are $\kk$-algebraically dependent. 
A class of forms $F$, discussed both in \cite{GN76}
and by Perazzo \cite{Per00},  
with vanishing Hessian and for which $V (F)$
is not a cone, are the Perazzo forms
$$F =  x_1p_1 + \dots + x_ap_a + G \in \kk[x_1, \dots ,x_a, u_1,\dots, u_b],$$
where  $p_1,\dots, p_a$ and $G$ are in  $\kk[u_1, \dots , u_b]$, and $p_1, \dots, p_a$ are
linearly independent, but algebraically dependent. In particular, for
$ a + b = 5$, all non-cones defined by a form with vanishing Hessian
are defined by a
Perazzo form. 
(See
\cite{GN76} or \cite[Theorem 7.3]{WdeB20}.) 
The Lefschetz properties of rings defined by Perazzo forms in characteristic zero are investigated in the recent papers \cite{FMM23,AA}.

J. Watanabe and M.~de Bondt \cite{WdeB20} have written a detailed modern argument for the Gordan-Noether Theorem. The paper \cite{BFP22} uses geometric 
techniques to give a new proof of the Gordan-Noether Theorem. In both of these papers the field $\kk$ has characteristic zero. 

\ms 
We work in arbitrary characteristic; so we do not  take literal partial derivatives. Instead we use the divided power algebra $D_\bullet U^*$ which is associated to the polynomial ring $P=\Sym_\bullet U$ for the vector space $U$ over the field $\kk$. We replace the ``Hessian of $F$'' with the homomorphism ``$\Gamma_{\phi_3}$''  of Section~\ref{Gamma}.

\bs
In Section~\ref{3var} we state and prove the three variable version of the Main Theorem (Theorem \ref{main}). Our precise formulation of the three variable version (see Lemma~\ref{lemma3}) is used in the inductive part of the proof of Theorem~\ref{main}. (See the case $r=0$ in Lemma~\ref{case3a}.)
Furthermore, we prove the three variable version using the same argument as we use for the four variable version; except there are fewer cases and each calculation is more straightforward. The reader might want to read Section~\ref{3var} as a preparation for reading the proof of Theorem~\ref{main}.

\section
{Notation, conventions, and elementary results.}\label{prelims}

\subsection{The language.}
\begin{conventions}
\label{The language}
\begin{enumerate}[\rm(a)] 
 \item The graded algebra $A=\bigoplus_{0\le i}A_i$ is a {\it standard graded $A_0$-algebra} if $A_1$ is finitely generated as an $A_0$-module  and 
$A$ is generated as an $A_0$-algebra by $A_1$.

\item Let $\kk$ be a field and $A=\bigoplus A_i$ be a standard graded $\kk$-algebra. Then $A$ has the {\it weak Lefschetz property}  if there exists a linear form $\ell$ of $A_1$ such that the $\kk$-module homomorphism
$$\mu_{\ell}:A_i\to A_{i+1}$$ has maximal rank for each index $i$, where $\mu_{\ell}$ is multiplication by $\ell$. 
(A homomorphism $\xi:V\to W$ of finitely generated $\kk$-modules has {\it maximal rank} if $\rank \xi$ is equal to $\min\{\dim V,\dim W\}$.)

\item If $A=\bigoplus_{i=0}^\sigma A_i$ 
 is an Artinian standard-graded $\kk$-algebra, then $A$ is {\it Gorenstein with socle degree $\sigma$} if $A_\sigma$ is a one dimensional vector space and every ideal of $A$ contains $A_\sigma$.

\item
In this paper $\kk$ is an arbitrary field (unless otherwise noted) and $\Hom$, $\Sym$, $D$, $\bw$, $\w$, and $\t$ mean  $\Hom_\kk$, $\Sym^\kk$, $D^\kk$, $\bw_\kk$, $\w_\kk$ and $\t_\kk$, respectively. 
\item If $U$ is a vector space over the field $\kk$, then $T_\bullet U$, $\Sym_\bullet U$, $D_\bullet U$, and $\bw^\bullet U$ are the tensor algebra, symmetric algebra, divided power algebra, and exterior algebra of $U$ over $\kk$, respectively. See, for example, \cite{Nor} or \cite{Eis}.

\item  If $M$ is a matrix, then $\det M=|M|$  is the determinant 
of $M$. \item If $f$ is a homomorphism, then we write $\im f$ and $\ker f$ for the image and kernel of $f$, respectively.
\item If $U$ is a vector space, then $\dim U$ is the dimension of $U$ as a vector space.
\end{enumerate}
\end{conventions}
\begin{conventions}
Let $U$ be a finite dimensional vector space  over the field $\kk$.
\begin{enumerate}[\rm(a)]
\item 
 Let $U^*$ represent $\Hom_\kk(U,\kk)$ and $D_\bullet U^*$ represent the divided power algebra $\bigoplus_{i=0}^\infty D_iU^*$ for $$D_iU^*=\Hom_\kk(\Sym_iU,\kk).$$

\item\label{co-mu} The co-multiplication map $\Delta: D_iU\to T_iU$ is the composition
\begin{align*}D_iU&{}\subseteq  D_\bullet U\xrightarrow{D_{\bullet}(d)}D_{\bullet}(\underbrace{U\p\dots\p U}_{i\text{ times}})=\underbrace{D_{\bullet}U\t \dots \t D_{\bullet}U}_{i\text{ times}}\\{}&\xrightarrow{\text{projection}}\underbrace{D_{1}U\t \dots \t D_{1}U}_{i\text{ times}},\end{align*} where $D_{\bullet}(d)$ is the divided power algebra homomorphism induced by the diagonal map
$$d:U\to \underbrace{U\p\dots\p U}_{i\text{ times}}, \quad \text{given by $d(u)=(\underbrace{u,\dots,u}_{i\text{ times}})$, for $u\in U$}.$$ 
In particular, $\Delta:D_3U\to T_3U$ sends $x^{(2)}y$ to $x\t x\t y+x\t y\t x+ y\t x\t x$, for $x$ and $y$ in $U$.
\item If $x_1,\dots,x_d$ is a basis for $U$, then the set of monomials of degree $i$ in  
$x_1,\dots,x_d$, denoted $\binom{x_1,\dots,x_d}{i}$, is a basis for the $i$-th symmetric power, $\Sym_iU$, of $U$ and the set of homomorphisms
$$\left\{ m^*\left| m\in \binom{x_1,\dots,x_d}{i}\right.\right\}$$ is a basis for $D_iU^*$ where $m^*:\Sym_i U\to \kk$ is the $\kk$-module homomorphism with
\begin{equation}\label{*}m^*(m')=\begin{cases} 1,&\text{if $m=m'$, and }\\0,&\text{if $m\neq m'$,}\end{cases}\end{equation} for $m,m'$ in $\binom{x_1,\dots,x_d}{i}$. \item We make 
much use of the structure of $D_\bullet U^*$ as a module over $\Sym_\bullet U$.
If $v_i\in D_iU^*$ and $u_j\in \Sym_jU$, then $u_jv_i$ is the element of $D_{i-j}U^*$ which sends $u_{i-j}$ in $\Sym_{i-j}U$ to $v_i(u_ju_{i-j})$.
In particular, if $m$ and $m'$ are monomials in $\Sym_\bullet U$
(with respect to some basis $x_1,\dots,x_d$ for $U$ and $*$ is defined as in (\ref{*})), then
$$m'(m)^*= \begin{cases} (\frac m{m'})^*,&\text{if $m'$ divides $m$},\\
0,&\text{if $m'$ does not divide $m$}.\end{cases}
$$
\item If $u^*$ is an element of $U^*$, then we write $u^{*(n)}$ for the element $(u^*)^{(n)}$ in $D_nU^*$.

\item If $x_1,\dots,x_d$ is a basis for the vector space $U$, then $x_1^*,\dots,x_d^*$ is the dual basis for $U^*$. Similarly, if then $x_1^*,\dots,x_d^*$ is a basis for a vector space $U^*$, then $x_1,\dots,x_d$ is the dual basis for the vector space $U$.
\item  If $u_i\in \Sym_iU$ and $\phi_i\in D_iU^*$, then $u_i\phi_i=\phi_iu_i$ is an element of $\kk$.
\end{enumerate}
 \end{conventions}

The following data is used throughout the paper.
\begin{data}
\label{28.4}
 Let $\kk$ be a field,  $U$ be a $d$-dimensional vector space over $\kk$, $P$ be the polynomial ring $P=\Sym_\bullet U$, $\phi_3$ be a non-zero element of $D_3U^*$, $I=\ann_P(\phi_3)$, and $A_{\phi_3}$ be the standard graded Artinian Gorenstein $\kk$-algebra
$A_{\phi_3}={P}/{I}$. The socle degree of $A$ is three. 
\end{data}

\subsection{The homomorphisms $\bowtie$, $p_{\phi_3}$,  and $\Gamma_{\phi_3}$.}
\label{Gamma}

\begin{notation} 
Let $\kk$ be a field, $E$ and $G$ be $\kk$-modules, and $m$ be a positive integer. Each pair  of elements $( X,Y )$, with $X\in  
D_ m E$ and $Y \in  \bw^m G$, gives rise to an element of $\bw^m( E\t G  )$, which we denote
by $X \bowtie Y$. We now give the definition of $X \bowtie Y$. Consider the composition
$$D_m E\t 
T_m G \xrightarrow{\Delta\t 1}
T_m E \t T_m G\xrightarrow{\xi}  \ts\bw^m ( E \t G ), $$
where $\Delta:D_m E\to T_mE$ is co-multiplication (see Convention~\ref{The language}.(\ref{co-mu})) and 
$$\xi\big(( x_1 \t \ldots \t x_ m )\t ( y_ 1 \t\ldots \t y_m )\big) = ( x_1\t y_1 ) \w\ldots \w ( x_m\t y_m ),$$ for
$x_i \in E$ and
$y_i \in  G$. It is easy to see that the above composition factors  through
$D_mE\t \bw ^m G$. 
Let $X\t Y\mapsto  X \bowtie Y$ be the resulting map from $D_m E\t \bw^m G$ to $\bw^m(E\t G)$. This map is used 
in \cite{K08} 
 and is called $\langle -,-\rangle$ in \cite[III.2]{ABW}.
\end{notation}

\begin{definition}
\label{28.7} Adopt  
Data~\ref{28.4}.
\begin{enumerate}[\rm(a)]

\item \label{28.7.a} Define the $\kk$-module homomorphism $p_{\phi_3}:\Sym_2U\to U^*$ by $p_{\phi_3}(u_2)=u_2(\phi_3)$, for $u_2\in \Sym_2U$. 

\item\label{28.7.b} Define the $\kk$-module homomorphism $\ts\Gamma_{\phi_3}:D_dU\t \ts\bw^dU\to \bw^dU^*$ 
to be the composition
$$\ts D_dU\t\ts\bw^dU\xrightarrow{\bowtie}\bw^d(U\t U)\xrightarrow{\bw^d\mult}
\bw^d(\Sym_2U)\xrightarrow{\bw^dp_{\phi_3}}\bw^dU^*,$$where $$\mult:U\t U\to \Sym_2U$$ is multiplication in the Symmetric algebra $\Sym_\bullet U$.
\end{enumerate}
\end{definition}

\begin{example}
\label{28.8} Adopt Data~\ref{28.4}. Let $\Gamma_{\phi_3}$ be the $\kk$-module homomorphism of Definition~\ref{28.7}.(\ref{28.7.b}). Let $\ell_1,\dots,\ell_d$ be a basis for $U$.
 
If $d=3$, then \begingroup\allowdisplaybreaks
\begin{align*}
\Gamma_{\phi_3}(\ell_1^{(3)}\t \ell_1\w \ell_2\w \ell_3)={}&\phantom{++}\ell_1^2\phi_3\w \ell_1 \ell_2\phi_3\w \ell_1 \ell_3\phi_3,\\
\Gamma_{\phi_3}(\ell_1^{(2)}\ell_2\t \ell_1\w \ell_2\w \ell_3)={}
&\begin{cases}
\phantom{+}\ell_1^2\phi_3\w \ell_2^2\phi_3\w \ell_1 \ell_3\phi_3\\
+\ell_1^2\phi_3\w \ell_1 \ell_2\phi_3\w \ell_2 \ell_3\phi_3,
\text{ and}
\end{cases}\\
\Gamma_{\phi_3}(\ell_1\ell_2\ell_3\t \ell_1\w \ell_2\w \ell_3)={}&
\begin{cases}
\phantom{+}\ell_1^2\phi_3\w \ell_2^2\phi_3\w \ell_3^2\phi_3\\
+2 \ell_1\ell_2\phi_3\w \ell_2\ell_3\phi_3\w \ell_1\ell_3\phi_3.\end{cases}
\end{align*}\endgroup

If $\dim U=4$, then 
\begingroup\allowdisplaybreaks
\begin{align*}
\Gamma_{\phi_3}(\ell_1^{(4)}\t \ell_1\w \ell_2\w \ell_3 \w \ell_4)={}&\phantom{++}
\ell_1^2\phi_3\w \ell_1\ell_2\phi_3\w \ell_1\ell_3\phi_3\w \ell_1\ell_4\phi_3,\\
\Gamma_{\phi_3}(\ell_1^{(3)}\ell_2\t \ell_1\w \ell_2\w \ell_3 \w \ell_4)={}&
\begin{cases}
\phantom{+}\ell_1^2\phi_3\w \ell_2^2\phi_3\w \ell_1\ell_3\phi_3\w \ell_1\ell_4\phi_3\\
+\ell_1^2\phi_3\w \ell_1\ell_2\phi_3\w \ell_2\ell_3\phi_3\w \ell_1\ell_4\phi_3\\
+\ell_1^2\phi_3\w \ell_1\ell_2\phi_3\w \ell_1\ell_3\phi_3\w \ell_2\ell_4\phi_3,\end{cases}
\\
\Gamma_{\phi_3}(\ell_1^{(2)}\ell_2^{(2)}\t \ell_1\w \ell_2\w \ell_3 \w \ell_4)={}& 
\begin{cases}
\phantom{+}\ell_1^2\phi_3\w \ell_1\ell_2\phi_3\w \ell_2\ell_3\phi_3\w \ell_2\ell_4\phi_3\\
+\ell_1^2\phi_3\w \ell_2^2\phi_3\w \ell_1\ell_3\phi_3\w \ell_2\ell_4\phi_3\\
+\ell_1^2\phi_3\w \ell_2^2\phi_3\w \ell_2\ell_3\phi_3\w \ell_1\ell_4\phi_3\\
+\ell_1\ell_2\phi_3\w \ell_2^2\phi_3\w \ell_1\ell_3\phi_3\w \ell_1\ell_4\phi_3,\\
\end{cases}\\
\Gamma_{\phi_3}(\ell_1^{(2)}\ell_2\ell_3\t \ell_1\w \ell_2\w \ell_3 \w \ell_4)={}&
\begin{cases}
\phantom{+}\ell_1^2\phi_3\w \ell_1\ell_2\phi_3\w \ell_2\ell_3\phi_3\w \ell_3\ell_4\phi_3&\\
+\ell_1^2\phi_3\w \ell_1\ell_2\phi_3\w \ell_3^2\phi_3\w \ell_2\ell_4\phi_3&\\
+\ell_1^2\phi_3\w \ell_2^2\phi_3\w \ell_1\ell_3\phi_3\w \ell_3\ell_4\phi_3&\\
+\ell_1^2\phi_3\w \ell_2\ell_3\phi_3\w \ell_1\ell_3\phi_3\w \ell_2\ell_4\phi_3&\\
+\ell_1^2\phi_3\w \ell_2^2\phi_3\w \ell_3^2\phi_3\w \ell_1\ell_4\phi_3&\\
+2\ell_1\ell_2\phi_3\w \ell_2\ell_3\phi_3\w \ell_1\ell_3\phi_3\w \ell_1\ell_4\phi_3, \text{ and}&\\
\end{cases}\\
\Gamma_{\phi_3}(\ell_1\ell_2\ell_3\ell_4\t \ell_1\w \ell_2\w \ell_3 \w \ell_4)
={}&\begin{cases}
\phantom{+}\ell_1^2\phi_3\w \ell_2^2\phi_3\w \ell_3^2\phi_3\w \ell_4^2\phi_3\\
+2\ell_1^2\phi_3\w \ell_2\ell_3\phi_3\w \ell_3\ell_4\phi_3\w \ell_2\ell_4\phi_3\\
+2\ell_1\ell_3\phi_3\w \ell_2^2\phi_3\w \ell_3\ell_4\phi_3\w \ell_1\ell_4\phi_3\\
+2\ell_1\ell_2\phi_3\w \ell_2\ell_4\phi_3\w \ell_3^2\phi_3\w \ell_1\ell_4\phi_3\\
+2\ell_1\ell_2\phi_3\w \ell_2\ell_3\phi_3\w \ell_1\ell_3\phi_3\w \ell_4^2\phi_3.\\
\end{cases}
\end{align*}
\endgroup
For example, we calculate $\Gamma_{\phi_3}(\ell_1^{(2)}\ell_2\ell_3\t \ell_1\w\ell_2\w\ell_3\w \ell_4)$ explicitly. Observe that
$$\Delta(\ell_1^{(2)}\ell_2\ell_3)=\begin{cases}
\phantom{+}
 \ell_1\t \ell_1\t\ell_2\t \ell_3
+\ell_1\t \ell_2\t\ell_1\t \ell_3
+\ell_2\t \ell_1\t\ell_1\t \ell_3\\
+\ell_1\t \ell_1\t\ell_3\t \ell_2
+\ell_1\t \ell_2\t\ell_3\t \ell_1
+\ell_2\t \ell_1\t\ell_3\t \ell_1\\
+\ell_1\t \ell_3\t\ell_1\t \ell_2
+\ell_1\t \ell_3\t\ell_2\t \ell_1
+\ell_2\t \ell_3\t\ell_1\t \ell_1\\
+\ell_3\t \ell_1\t\ell_1\t \ell_2
+\ell_3\t \ell_1\t\ell_2\t \ell_1
+\ell_3\t \ell_2\t\ell_1\t \ell_1.
\end{cases}$$ It follows that 
\begingroup\allowdisplaybreaks
\begin{align*}{}&\Gamma_{\phi_3}(\ell_1^{(2)}\ell_2\ell_3\t \ell_1\w\ell_2\w\ell_3\w \ell_4)\\
{}={}&\begin{cases}
\phantom{+}
 \ell_1^2\phi_3\w \ell_1\ell_2\phi_3\w\ell_2\ell_3\phi_3\w \ell_3\ell_4\phi_3
+\ell_1^2\phi_3\w \ell_2^2\phi_3\w\ell_1\ell_3\phi_3\w \ell_3\ell_4\phi_3\\
+\underline{\ell_2\ell_1\phi_3\w \ell_1\ell_2\phi_3\w\ell_1\ell_3\phi_3\w \ell_3\ell_4\phi_3}
+\ell_1^2\phi_3\w \ell_1\ell_2\phi_3\w\ell_3^2\phi_3\w \ell_2\ell_4\phi_3\\
+\ell_1^2\phi_3\w \ell_2^2\phi_3\w\ell_3^2\phi_3\w \ell_1\ell_4\phi_3
+\underline{\ell_2\ell_1\phi_3\w \ell_1\ell_2\phi_3\w\ell_3^2\phi_3\w \ell_1\ell_4\phi_3}\\
+\ell_1^2\phi_3\w \ell_3\ell_2\phi_3\w\ell_1\ell_3\phi_3\w \ell_2\ell_4\phi_3
+\underline{\ell_1^2\phi_3\w \ell_3\ell_2\phi_3\w\ell_2\ell_3\phi_3\w \ell_1\ell_4\phi_3}\\
+\ell_2\ell_1\phi_3\w \ell_3\ell_2\phi_3\w\ell_1\ell_3\phi_3\w \ell_1\ell_4\phi_3
+\underline{\ell_3\ell_1\phi_3\w \ell_1\ell_2\phi_3\w\ell_1\ell_3\phi_3\w \ell_2\ell_4\phi_3}\\
+\ell_3\ell_1\phi_3\w \ell_1\ell_2\phi_3\w\ell_2\ell_3\phi_3\w \ell_1\ell_4\phi_3
+\underline{\ell_3\ell_1\phi_3\w \ell_2^2\phi_3\w\ell_1\ell_3\phi_3\w \ell_1\ell_4\phi_3}.
\end{cases}\end{align*}\endgroup  
The underlined summands are zero and the last two summands which are not underlined   are equal. 
\end{example}

\ms
Remark~\ref{ff} is used in the proof of Observation~\ref{28.9}.
\begin{remark} 
Field extensions are always faithfully flat.  
Let $V$ be a vector space over a field $\kk$, $K$ be a field extension of $\kk$, and $v\in V$. If $v\t_{\kk}1$ is zero in $V\t_\kk K$, then $v$ is zero in $V$.\label{ff}\end{remark}

\begin{observation} 
Adopt Data~{\rm\ref{28.4}}.
\label{28.9} If the homomorphism $$\ts \Gamma_{\phi_3}:D_dU\t \bw^dU\to \bw^dU^*$$ of Definition~{\rm\ref{28.7}} satisfies $\Gamma_{\phi_3}(\ell^{(d)}\t \omega_U)=0$ for all $\ell\in U$ and some basis element  $\omega_U$ of $\bw^dU$, then $\Gamma_{\phi_3}$ is identically zero.
\end{observation}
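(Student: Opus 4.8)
The plan is to rephrase the statement as the vanishing of a single homogeneous polynomial of degree $d$, and to extract that polynomial from the concrete description of $\Gamma_{\phi_3}$.

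First choose a basis $x_1,\dots,x_d$ of $U$ so that $\omega_U=x_1\w\dots\w x_d$ (possible after rescaling one $x_i$, since $\bw^dU$ is one-dimensional). As $\{x^{(a)}\mid |a|=d\}$ is a basis of $D_dU$ and $\bw^dU=\kk\,\omega_U$, it suffices to show $\Gamma_{\phi_3}(x^{(a)}\t\omega_U)=0$ for every $a$ with $|a|=d$. For $\ell=\sum_ic_ix_i\in U$ the divided-power identity $\ell^{(d)}=\sum_{|a|=d}c^a\,x^{(a)}$ together with linearity of $\Gamma_{\phi_3}$ gives
$$\Gamma_{\phi_3}(\ell^{(d)}\t\omega_U)=\sum_{|a|=d}c^a\,\Gamma_{\phi_3}(x^{(a)}\t\omega_U).$$
Fixing a basis vector $\omega^{*}$ of the one-dimensional space $\bw^dU^{*}$ and writing $\Gamma_{\phi_3}(x^{(a)}\t\omega_U)=\gamma_a\,\omega^{*}$ with $\gamma_a\in\kk$, the hypothesis becomes: the homogeneous degree-$d$ polynomial $g(c):=\sum_{|a|=d}\gamma_a\,c^a\in\kk[c_1,\dots,c_d]$ vanishes at every point of $\kk^d$; the Observation asserts that then $g\equiv 0$, which is exactly what we want. (Using Definition~\ref{28.7} and the formulas of Example~\ref{28.8} one recognizes $g(c)$ as the determinant of the symmetric $d\times d$ matrix with $(j,k)$ entry $\sum_ic_i\,(x_ix_jx_k)\phi_3$, i.e.\ the value at $c$ of the ``Hessian'' of $\phi_3$; only the degree of $g$ is needed in the easy case.)

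If $|\kk|>d$ — in particular if $\kk$ is infinite — the argument ends at once: $g$ is homogeneous of degree $d$, so $\deg_{c_i}g\le d<|\kk|$ for each $i$, and a polynomial with these degree bounds which vanishes on all of $\kk^d$ must be zero (Combinatorial Nullstellensatz, or an elementary induction on the number of variables). Hence every $\gamma_a=0$ and $\Gamma_{\phi_3}$ is identically zero. This settles all fields except finitely many small ones — for the paper, $\kk=\mathbb F_2$ with $d\in\{3,4\}$ and $\kk=\mathbb F_3$ with $d=4$.

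To reach those cases I would extend scalars to an infinite field $K\supseteq\kk$. All the relevant constructions ($D_\bullet$, $\Sym_\bullet$, $\bw^\bullet$, $\bowtie$, $p_{\phi_3}$, and therefore $\Gamma_{\phi_3}$) commute with $-\t_\kk K$, so $\Gamma_{\phi_3\t 1}(x^{(a)}\t\omega_U)=\Gamma_{\phi_3}(x^{(a)}\t\omega_U)\t 1$; by Remark~\ref{ff} it is then enough to prove these vanish over $K$, i.e.\ that $g=0$ as a polynomial. The tempting shortcut — just reapply the previous paragraph over $K$ — requires knowing that the hypothesis persists over $K$, i.e.\ that $g$ vanishes at every point of $K^d$; this does \emph{not} follow formally, since over such a small $\kk$ the $\kk$-points of $\mathbb A^d$ are not Zariski dense. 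This is where the real work sits, and I expect it to be the main obstacle: one must show that a ``Hessian'' determinant $g$ of a cubic in $d\le 4$ variables over $\mathbb F_2$ (or over $\mathbb F_3$, $d=4$) vanishing at all $\mathbb F_q$-rational points is already the zero polynomial. I would exploit the total symmetry of $T_{ijk}=(x_ix_jx_k)\phi_3$ — equivalently, the explicit multilinear expansions of $\Gamma_{\phi_3}$ listed in Example~\ref{28.8} — to show that those coefficients $\gamma_a$ which $\mathbb F_q$-point evaluation fails to pin down individually (e.g.\ over $\mathbb F_2$, coefficients indexed by distinct $a$ with the same support) are nevertheless forced to be $0$; as only finitely many pairs $(\kk,d)$ occur, this becomes an explicit verification, e.g.\ in Macaulay2. (Here one may assume $\ell\phi_3\neq0$ for all $\ell\neq0$, since otherwise a whole row of the matrix defining $g$ is identically zero and $g\equiv0$ trivially.) Once $g=0$ over $K$, every $\gamma_a\t1=0$, so $\gamma_a=0$ by Remark~\ref{ff}, and $\Gamma_{\phi_3}$ is identically zero.
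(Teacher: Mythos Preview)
Your approach and the paper's coincide once $|\kk|>d$: writing $\ell=\sum_i c_ix_i$, both reduce the Observation to the vanishing of the degree-$d$ polynomial $g(c)=\sum_{|a|=d}\gamma_a\,c^a$, and the paper's iterated Vandermonde step (Claim~\ref{28.9.1}) is exactly the one-variable-at-a-time version of the Nullstellensatz-type fact you invoke. In that regime both arguments are complete and correct, and there is no substantive difference between them.

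Your diagnosis of the small-field case is accurate: knowing $g$ vanishes on $\kk^d$ does not force $g$ to vanish on $K^d$, so the ``tempting shortcut'' really is illegitimate. What you should know is that the paper takes precisely that shortcut --- it asserts, citing Remark~\ref{ff}, that ``it suffices to prove the assertion when $\kk$ has a large number of elements,'' and then runs the Vandermonde argument assuming enough distinct scalars exist. The paper offers no separate treatment of $|\kk|\le d$, no appeal to the Hessian structure of $g$, and no finite verification; so you are not missing any mechanism the paper supplies. Your proposed fix (exploit the total symmetry of $T_{ijk}$ and check the finitely many residual cases directly) is a plausible route, but since you do not actually carry it out, your argument is incomplete at exactly the same point as the paper's --- you are simply more candid about where the difficulty lies.
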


\begin{proof}
Fix a basis $\omega_U$ for $\bw^dU$. 
The map $$\ts \Gamma_{\phi_3}(-\t \omega_U):U\to \bw^dU^*$$ is a linear transformation of vector spaces over a field. 
In light of Remark~\ref{ff}, 
 it suffices to prove the assertion when $\kk$ has a large number of elements. 

The proof of Observation~\ref{28.9} is obtained by iterating the following claim.

\begin{claim-no-advance}
\label{28.9.1} Let $X$ be an element of $D_\delta U$ for some integer $\delta$ with $0\le \delta\le d$. If $\Gamma_{\phi_3}(\ell^{(d-\delta)}X\t \omega_U)=0$ for all $\ell\in U$, then $\Gamma_{\phi_3}(\ell_1^{(e_1)}\ell_2^{(e_2)}X\t \omega_U)=0$ for all $\ell_1,\ell_2$ in $U$ and all nonnegative integers $e_1$ and $e_2$ with $e_1+e_2=d-\delta$.
\end{claim-no-advance}

\ms \noindent{\it Proof of Claim~{\rm\ref{28.9.1}}.}
If $\ell_1$ and $\ell_2$ are in $U$ and $a_1, \dots, a_{d-\delta+1}$ 
 are distinct  
elements  
of $\kk$, then 
\begin{equation}\label{28.9.2}\Gamma_{\phi_3}((\ell_1+a_i\ell_2)^{(d-\delta)}X\t \omega_U)=
\sum_{j=0}^{d-\delta}a_i^j\Gamma_{\phi_3}(\ell_1^{(d-\delta-j)}\ell_2^{(j)}X\t \omega_U).\end{equation}
The hypothesis guarantees that the left side of (\ref{28.9.2}) is zero. 
It follows that product of the row vector
{\small$$\bmatrix \Gamma_{\phi_3}(\ell_1^{(d-\delta-0)}\ell_2^{(0)}X\t \omega_U)&
\Gamma_{\phi_3}(\ell_1^{(d-\delta-1)}\ell_2^{(1)}X\t \omega_U)&
\dots &\Gamma_{\phi_3}(\ell_1^{(0)}\ell_2^{(d-\delta-0)}X\t \omega_U)\endbmatrix$$}
and the Vandermonde matrix
\begin{equation}\label{VT}\bmatrix 1&1&\dots&1\\a_1&a_2&\dots &a_{d-\delta+1}\\
\vdots&\vdots&&\vdots\\
a_1^{d-\delta}&a_2^{d-\delta}&\dots&a_{d-\delta+1}^{d-\delta}
\endbmatrix\end{equation} is zero.
The  Vandermonde matrix is invertible and $\Gamma_{\phi_3}(\ell_1^{(e_1)}\ell_2^{(e_2)}X\t \omega_U)=0$ for all $\ell_1$ and $\ell_2$ in $U$ and all non-negative integers $e_i$  with $e_1+e_2=d-\delta$.
This completes the proof of Claim~\ref{28.9.1}.

\ms
Now we prove Observation~\ref{28.9}. First take $X=1$. The hypothesis ensures that $\Gamma_{\phi_3}(\ell^{(d)}\t \omega_U)=0$ for all $\ell\in U$. Apply Claim~\ref{28.9.1}  
to conclude that
\begin{equation} \begin{array}{ll}\Gamma_{\phi_3}(\ell_1^{(e_1)}\ell_2^{(e_2)}\t \omega_U)=0\quad&\text{for all $\ell_1,\ell_2\in U$ and all nonnegative}\\&\text{integers $e_1$ and $e_2$ with $e_1+e_2=d$.}\end{array}\label{28.9.3}\end{equation}

Now take $X=\ell_3^{(e_3)}$ for some $\ell_3\in U$ and some integer $e_3$ with $0\le e_3\le d$. Apply Claim \ref{28.9.1}, together with (\ref{28.9.3}), to conclude that $\Gamma_{\phi_3}(\ell_1^{(e_1)}\ell_2^{(e_2)}\ell_3^{(e_3)}\t \omega_U)=0$, for all $\ell_1,\ell_2,\ell_3\in U$ and all nonnegative integers $e_1$, $e_2$, and $e_e$ with $e_1+e_2+e_3=d$.

One finishes the proof by iterating Claim~\ref{28.9.1}.
\end{proof}

\subsection{The connection between 
 $\Gamma_{\phi_3}$ and the weak Lefschetz property.}\label{connection}

\begin{lemma}
\label{28.10} Adopt Data~{\rm\ref{28.4}} and Definition~{\rm\ref{28.7}}. 
Assume $\ell\phi_3\neq 0$ for all nonzero $\ell$ in $U$. Then
$A=A_{\phi_3}$ has the weak Lefschetz property if and only if $\Gamma_{\phi_3}$ is not identically zero. 
\end{lemma}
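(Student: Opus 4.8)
The plan is to reduce the weak Lefschetz property of $A=A_{\phi_3}$ to a single isomorphism in degree one, and then to recognize that isomorphism as the non-vanishing of $\Gamma_{\phi_3}$ on the ``diagonal'' elements $\ell^{(d)}\t\omega_U$, after which Observation~\ref{28.9} finishes the job.

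First I would pin down the Hilbert function. The hypothesis $\ell\phi_3\neq 0$ for every nonzero $\ell\in U$ says exactly that $I_1=\ann_{P_1}\phi_3=0$, so $A_1=U$ has dimension $d$; since $A$ is Artinian Gorenstein with socle degree three, the multiplication pairings $A_i\times A_{3-i}\to A_3\cong\kk$ are perfect, so $\dim A_2=\dim A_1=d$ and $\dim A_3=1$. I would also note that $I_2=\ker p_{\phi_3}$, so $p_{\phi_3}\colon P_2\to U^*$ induces an injection $\bar p_{\phi_3}\colon A_2\hookrightarrow U^*$ which, by the dimension count just made, is an isomorphism. Next, for any nonzero $\ell\in A_1$, the maps $\mu_\ell\colon A_0\to A_1$ (rank $1$ since $\ell\neq0$), $\mu_\ell\colon A_2\to A_3$ (rank $1$ since the perfect pairing $A_1\times A_2\to A_3$ supplies $a\in A_2$ with $\ell a\neq 0$), and $\mu_\ell\colon A_3\to A_4=0$ all automatically have maximal rank. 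Hence $A$ has the weak Lefschetz property if and only if some linear form $\ell$ makes $\mu_\ell\colon A_1\to A_2$ of maximal rank, i.e.\ (both spaces having dimension $d$) an isomorphism — and such an $\ell$ is perforce nonzero.

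Then I would compute the left side of this equivalence in terms of $\Gamma_{\phi_3}$. Fix a basis $x_1,\dots,x_d$ of $U$ and set $\omega_U=x_1\w\cdots\w x_d$. Unwinding the definition of $\bowtie$ — using that $\Delta(\ell^{(d)})=\ell\t\cdots\t\ell$ (here the divided-power formalism is essential: no factorial is introduced) and lifting $\omega_U$ to $x_1\t\cdots\t x_d\in T_dU$ — then applying $\bw^d\mult$ and $\bw^dp_{\phi_3}$, I get
\[
\Gamma_{\phi_3}(\ell^{(d)}\t\omega_U)=(\ell x_1\phi_3)\w(\ell x_2\phi_3)\w\cdots\w(\ell x_d\phi_3)\in\bw^dU^*,
\]
exactly as in the $d=3$ and $d=4$ computations of Example~\ref{28.8}. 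But the composite $A_1=U\xrightarrow{\ \mu_\ell\ }A_2\xrightarrow{\ \bar p_{\phi_3}\ }U^*$ carries $x_i$ to $p_{\phi_3}(\ell x_i)=\ell x_i\phi_3$, so the displayed element of the one-dimensional space $\bw^dU^*$ is (up to the fixed generator $x_1^*\w\cdots\w x_d^*$) the determinant of this composite. Since $\bar p_{\phi_3}$ is an isomorphism, the composite is an isomorphism precisely when $\mu_\ell\colon A_1\to A_2$ is; equivalently, $\Gamma_{\phi_3}(\ell^{(d)}\t\omega_U)\neq0$ precisely when $\mu_\ell\colon A_1\to A_2$ is an isomorphism.

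Combining the two paragraphs: $A$ has the weak Lefschetz property $\iff$ there is an $\ell\in U$ with $\mu_\ell\colon A_1\to A_2$ an isomorphism $\iff$ there is an $\ell\in U$ with $\Gamma_{\phi_3}(\ell^{(d)}\t\omega_U)\neq0$ $\iff$ $\Gamma_{\phi_3}$ is not identically zero; the last equivalence is Observation~\ref{28.9} together with its trivial converse. The step I expect to demand the most care is the explicit evaluation of $\Gamma_{\phi_3}(\ell^{(d)}\t\omega_U)$: one must track the passage from $\bw^dU$ back to $T_dU$, the symmetry of $\Delta(\ell^{(d)})$ under $S_d$, and the fact that no factorials intervene; but Example~\ref{28.8} already does this for $d=3,4$, and the general case proceeds identically. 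The only other point needing attention is that $\bar p_{\phi_3}$ is genuinely an isomorphism, which is just the standard Gorenstein duality for the Artinian ring $A$.
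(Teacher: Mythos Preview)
Your proposal is correct and follows essentially the same route as the paper's own proof: both reduce the weak Lefschetz property to the injectivity (equivalently, bijectivity) of $\mu_\ell\colon A_1\to A_2$, then identify this with the linear independence of $\ell x_1\phi_3,\dots,\ell x_d\phi_3$ in $U^*$, i.e.\ with the nonvanishing of $\Gamma_{\phi_3}(\ell^{(d)}\t\omega_U)$, and finish with Observation~\ref{28.9}. Your version is slightly more explicit than the paper's (you spell out the Hilbert function via Gorenstein duality, verify the other graded pieces of $\mu_\ell$ automatically have maximal rank, and name the isomorphism $\bar p_{\phi_3}\colon A_2\to U^*$), but the substance is the same.
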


\begin{remark}The hypothesis $\ell\phi_3\neq 0$ for all nonzero $\ell$ in $U$ is harmless. It is equivalent to asserting that the degree one component of the ideal $I$ is zero. Consequently, it is also equivalent to the hypothesis that the embedding dimension of $A$ is equal to the vector space dimension of $U$.
\end{remark}

\begin{proof} 
Recall from Observation~\ref{28.9} that 
$$\Gamma_{\phi_3}(\ell^{(d)}\t -)\text{ is zero for all $\ell\in U$}\iff \Gamma_{\phi_3} \text{ is
identically zero.}$$
Let $x_1,\dots,x_d$ be a basis for $U$,  $\ell$ be an element of $U$, and $\mu_\ell$ represent the homomorphism ``multiplication by $\ell$''. Observe that \begingroup\allowdisplaybreaks
\begin{align*} &\text{$\ell$ is a weak Lefschetz element in $A$}\\ 
{}\iff{}&\mu_\ell: A_1\to A_2 \text{ is injective}\\
{}\iff{}&\mu_\ell: P_1\to A_2 \text{ is injective,}&&\text{because $I_1=0$,}\\
{}\iff{}&\begin{cases}\ell(\sum a_ix_i)(\phi_3)=0,\text{ with $a_i$ in }\kk,\\
\text{only if all $a_i$ are zero,} \end{cases}\\
{}\iff{}&\begin{cases}
\ell x_1(\phi_3), \ell x_2(\phi_3),\dots,\ell x_d(\phi_3) \text{ are linearly}\\\text{independent in $U^*$,}\end{cases}\\
{}\iff{}&\Gamma_{\phi_3}(\ell^{(d)}\t x_1\w x_2\w\dots \w x_d)\neq 0.\\
\end{align*}\endgroup
\vskip-35pt
\end{proof}

Theorem~\ref{main}, which is the main result of this paper, is an immediate consequence of Lemma~\ref{ML}, which is an immediate consequence of Lemma~\ref{28.10}.

\begin{lemma}
\label{ML} Adopt Data~{\rm\ref{28.4}} with $d=4$. Assume
\begin{enumerate}
[\rm(a)]
\item\label{ML.a} either the characteristic of $\kk$ is different than two; or else, the characteristic of $\kk$ is equal to two, but there does not exist a basis $x^*$, $y^*$, $z^*$, $w^*$ for $U^*$ with $\phi_3=x^{*(3)}+y^*z^*w^*$, and \item\label{ML.b} $\ell\phi_3\neq 0$ for all nonzero $\ell\in U$.\end{enumerate}
Then $\Gamma_{\phi_3}$ is not identically zero.\end{lemma}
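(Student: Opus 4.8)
The plan is to derive Lemma~\ref{ML} from the criterion already established, a normal-form reduction for $\phi_3$, and a finite case analysis. By the equivalences in the proof of Lemma~\ref{28.10} (whose standing hypothesis is precisely~(\ref{ML.b})), for a basis $x_1,x_2,x_3,x_4$ of $U$ and an element $\ell\in U$ one has $\Gamma_{\phi_3}(\ell^{(4)}\t x_1\w x_2\w x_3\w x_4)\neq0$ if and only if $\ell$ is a weak Lefschetz element of $A=A_{\phi_3}$; and, writing the vectors $\ell x_i\phi_3\in U^*$ in the dual basis, this says exactly that the symmetric $4\times4$ matrix $\bigl(\ell x_ix_j\phi_3\bigr)_{i,j}$ --- the characteristic-free (``divided power'') analogue of the Hessian of $F$ evaluated at $\ell$ --- is nonsingular. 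Hence, to prove that $\Gamma_{\phi_3}$ is not identically zero it suffices to produce, after some choice of basis, a single $\ell$ for which that matrix is nonsingular, i.e.\ a single weak Lefschetz element. Two standing reductions are available: by Remark~\ref{ff} (and the argument in the proof of Observation~\ref{28.9}) we may enlarge $\kk$ whenever it is convenient to have more scalars; and the conclusion, together with hypotheses~(\ref{ML.a}) and~(\ref{ML.b}), is unchanged when $\phi_3$ is replaced by $\a g\phi_3$ for a unit $\a\in\kk$ and $g\in\operatorname{GL}(U)$, so we may normalize $\phi_3$ under $\operatorname{GL}(U)$ together with rescaling.

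Using this freedom, Section~\ref{convenient} reduces to the case where $\phi_3$ has one of the shapes
\[
\a\phi_3 = x^{*(3)}+x^*\phi_{2,0}+\phi_{3,0}
\qquad\text{or}\qquad
\a\phi_3 = x^{*(2)}y^*+x^*\phi_{2,0}+\phi_{3,0},
\]
with $\phi_{i,0}\in D_i(\kk y^*\p\kk z^*\p\kk w^*)$; these are the forms~(\ref{form1}) and~(\ref{form2}). The argument branches on which form holds and on the rank $r$ of the quadratic form $\phi_{2,0}$ in $y^*,z^*,w^*$. For form~(\ref{form1}) with $r=0$ we have $\phi_{2,0}=0$, the coordinate $x$ decouples from $y^*,z^*,w^*$ (apart from the summand $x^{*(3)}$), and the statement reduces to the three-variable version of the theorem applied to $\phi_{3,0}$: this is the case $r=0$ of Lemma~\ref{case3a}, which invokes Lemma~\ref{lemma3}. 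For form~(\ref{form1}) with $r=3$ the quadratic $\phi_{2,0}$ is nondegenerate, one normalizes it and absorbs what one can of $\phi_{3,0}$, and an explicit working $\ell$ is readily found. The four substantive cases --- form~(\ref{form1}) with $r=2$ and with $r=1$, handled in Sections~\ref{cubic,r=2} and~\ref{cubic,r=1}, and the two cases of form~(\ref{form2}), handled in Sections~\ref{nocubic,and} and~\ref{nocubic,other} --- each proceed the same way: reduce $(\phi_{2,0},\phi_{3,0})$ to a manageable normal form, write down an explicit candidate $\ell$ (its coefficients depending, if necessary, on the remaining parameters of $\phi_3$), compute the four vectors $\ell x_i\phi_3\in U^*$, and check their independence by evaluating the $4\times4$ determinant above. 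Hypothesis~(\ref{ML.a}) enters in exactly one place: there is a single normal form of type~(\ref{form2}) for which, in characteristic two, every nonzero $\ell$ fails and the algebra is isomorphic to the ring~(\ref{1.1.1}) --- with inverse system~(\ref{evil}), the ring shown to lack the weak Lefschetz property in the discussion following Theorem~\ref{main}. That normal form is precisely the one excluded by~(\ref{ML.a}), and for every other normal form --- in particular whenever $\operatorname{char}\kk\neq 2$ --- a valid $\ell$ is produced.

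I expect the main obstacle to be the bookkeeping in the low-rank cases of form~(\ref{form1}) and in form~(\ref{form2}): these are the configurations nearest the exceptional inverse system $x^{*(3)}+y^*z^*w^*$, so the set of candidate weak Lefschetz elements is very thin and the determinant condition collapses to the nonvanishing of a single polynomial in the coefficients of $\phi_{2,0}$ and $\phi_{3,0}$. One must (i) arrange the reduction so that only a short, explicit list of normal forms remains to be tested, (ii) verify for each that the relevant determinant is a nonzero scalar --- hence that the outcome does not depend on $\operatorname{char}\kk$ --- and (iii) correctly single out the one normal form, in characteristic two, for which that determinant vanishes identically, confirming it produces exactly the ring~(\ref{1.1.1}). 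The remaining ingredients --- the easy case $r=3$, the reduction of $r=0$ to three variables, and the explicit $4\times4$ determinant evaluations in the other cases --- are routine once a candidate $\ell$ has been written down.
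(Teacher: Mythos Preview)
Your overall architecture matches the paper's: normalize $\phi_3$ via Section~\ref{convenient} into form~(\ref{form1}) or~(\ref{form2}), branch on the rank $r$ of $p_{\phi_{2,0}}$, dispose of $r=0$ by reduction to three variables and of $r=3$ directly (Lemma~\ref{case3a}), and treat the remaining cases in Sections~\ref{cubic,r=2}--\ref{nocubic,other}. Tactically, however, the paper does not do what you propose inside the substantive cases. You plan to exhibit an explicit $\ell$ and verify that the $4\times 4$ matrix $(\ell x_ix_j\phi_3)$ is nonsingular. The paper never produces such an $\ell$. Instead, in each of Propositions~\ref{58.2}, \ref{5.2}, \ref{6.1}, \ref{NYW} it assumes $\Gamma_{\phi_3}\equiv 0$, evaluates $\Gamma_{\phi_3}$ on well-chosen \emph{mixed} monomials of $D_4U$ (for example $x^{(3)}y$, $x^{(2)}z^{(2)}$, $xz^{(2)}w$), each evaluation producing a short polynomial relation among the parameters of $\phi_3$, and then combines these relations to force either $\ell\phi_3=0$ for some nonzero $\ell$ (contradicting~(\ref{ML.b})) or, in one branch, the exceptional form. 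The advantage of mixed monomials over pure powers $\ell^{(4)}$ is that most wedge summands vanish, so each computation is a line or two; by contrast, writing $\ell$ with undetermined coefficients and expanding the full determinant yields a quartic in those coefficients with all the parameters entangled, and showing it is not identically zero is no easier than what the paper does.

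There is also a concrete error in where you locate hypothesis~(\ref{ML.a}). The exceptional inverse system $x^{*(3)}+y^*z^*w^*$ is of form~(\ref{form1}), not~(\ref{form2}): it has $\phi_{2,0}=0$, hence $r=0$, and this is precisely where Lemma~\ref{case3a} invokes the three-variable Lemma~\ref{lemma3}, whose hypothesis excludes $\phi_{3,0}=y^*z^*w^*$ in characteristic two. The same exception also resurfaces at the end of the $r=2$ analysis: Proposition~\ref{58.2} concludes that if $\Gamma_{\phi_3}\equiv 0$ there, then $2=0$ in $P_0$ and $\phi_3$ is, after a change of basis, $\a X^{*(3)}+Y^*Z^*W^*$. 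So~(\ref{ML.a}) is consumed in the form-(\ref{form1}) branches, not in form~(\ref{form2}). Finally, your expectation in (ii) that the determinant reduces to a ``nonzero scalar'' is not realistic and is inconsistent with your own remark that $\ell$ may depend on the remaining parameters: the $\operatorname{GL}(U)$-orbits of cubics in four variables are not finite, the paper's arguments are genuinely parametric throughout, and the characteristic-two obstruction appears as an explicit factor of $2$ in one of the derived relations (see~(\ref{second}) and Claim~\ref{58.2.15}(\ref{58.c})), not as a vanishing scalar.
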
  

The proof of Lemma~\ref{ML} involves multiple cases and comprises the majority of this paper. The official proof is given in (\ref{proof of ML}).

\section{Put the Macaulay inverse system into a convenient form.}\label{convenient}

Ultimately, we prove a statement about an element $\phi_3$ of $D_3U^*$, where $U$ is a four-dimensional vector space. Our proof depends on the form of $\phi_3$. There are four main cases. Two of the cases involve $\phi_3$ as described in (\ref{61.1.a}) of Lemma~\ref{61.1}. (These two cases are distinguished by the rank $r$ of the homomorphism $p_{\phi_{2,0}}$; see Lemma~\ref{55.Claim 3}.) These two cases are treated in Propositions~\ref{58.2} and \ref{5.2}. The other two cases involve $\phi_3$ as described in (\ref{61.1.b}) of Lemma~\ref{61.1}. These two cases are separated in \ref{Two Cases} and are treated in Propositions~\ref{6.1} and \ref{NYW}.

In most characteristics, all $\phi_3$ can be put in the form of (\ref{61.1.a}) of Lemma~\ref{61.1}; however, form (\ref{61.1.b}) of Lemma~\ref{61.1} is required in characteristic three.

\begin{lemma}
\label{61.1}Let $\kk$ be a field, $U$ be a $d$-dimensional vector space 
over $\kk$, and $\phi_3$ be a nonzero element of $D_3U^*$. Then there exists a unit $\a$ of $\kk$ and a basis $x_1^*,\dots,x_d^*$ for $U^*$ 
such that
\begin{enumerate}[\rm(a)]
\item\label{61.1.a}
$\a \phi_3=x_1^{*(3)}+x_1^*\phi_{2,0}+\phi_{3,0}$, or 
\item\label{61.1.b} $\a \phi_3=x_1^{*(2)}x_2^*+x_1^*\phi_{2,0}+\phi_{3,0}$,  
or
\item\label{61.1.c}$\kk$ has characteristic two and  
$$\phi_3=\sum_{1\le i<j<k\le d}\a_{i,j,k}x_i^*x_j^*x_k^*$$ for some $\a_{i,j,k}$ in $\kk$,
\end{enumerate} 
where $\phi_{i,0}$ is an element of $$D_i(\bigoplus_{2\le j\le d}\kk x_j^*).$$
\end{lemma}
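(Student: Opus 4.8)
The plan is to classify the nonzero element $\phi_3\in D_3U^*$ according to the structure of the ternary cubic (or its divided-power analogue) that it represents, proceeding by a sequence of linear changes of basis for $U^*$. First I would pick any basis and examine $\phi_3$ as a degree-three element; the key invariant to look at is whether, after rescaling by some unit $\a$, there is a linear form that appears ``to the third power'' in $\a\phi_3$ in a usable way. More precisely, I would try to choose the first basis vector $x_1^*$ so that the coefficient of $x_1^{*(3)}$ in $\a\phi_3$ is $1$; if that can be done, a completion-of-the-cube / change-of-variables argument (replacing $x_j^*$ by $x_j^*$ plus multiples of $x_1^*$ for $j\ge 2$, i.e.\ the divided-power version of Tschirnhaus) lets me absorb all the terms that are ``quadratic in the remaining variables times $x_1^*$'' only partially, but certainly lets me write $\a\phi_3=x_1^{*(3)}+x_1^*\phi_{2,0}+\phi_{3,0}$ with $\phi_{2,0},\phi_{3,0}$ involving only $x_2^*,\dots,x_d^*$; that is case (\ref{61.1.a}). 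The point is that once the pure cube $x_1^{*(3)}$ is present with coefficient one, every remaining monomial either is divisible by $x_1^*$ exactly once or twice (the $x_1^{*(2)}x_j^*$ terms can be killed by the substitution $x_1^*\mapsto x_1^*$ — wait, rather by $x_j^*\mapsto x_j^*-c x_1^*$ type moves, which in characteristic $\neq 3$ clears the $x_1^{*(2)}x_j^*$ coefficients, and in characteristic $3$ these terms are harmless to leave inside $x_1^*\phi_{2,0}$) or is divisible by $x_1^*$ zero times, giving the three pieces.

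If no linear form can be made to appear as a pure third power after scaling — equivalently, the ``coefficient vector'' of cubes is everywhere zero in every basis — then I would look for a linear form $x_1^*$ and a second one $x_2^*$ so that the monomial $x_1^{*(2)}x_2^*$ appears with a nonzero coefficient. Generically one expects that if some $x_1^{*(2)}x_j^*$ coefficient is nonzero I can relabel and rescale to make it $1$; then again a change of basis in the variables $x_2^*,\dots,x_d^*$ (leaving $x_1^*$ fixed, and adjusting $x_2^*$) sweeps the remaining monomials into $x_1^*\phi_{2,0}+\phi_{3,0}$, yielding case (\ref{61.1.b}). The remaining possibility is that for the chosen $x_1^*$ (indeed for every linear form) both the $x_1^{*(3)}$ and all $x_1^{*(2)}x_j^*$ coefficients vanish, i.e.\ $x_1^*$ divides $\phi_3$ at most ``once'' in every monomial — but since this has to hold for all linear forms simultaneously, $\phi_3$ must be a sum of genuinely squarefree monomials $x_i^*x_j^*x_k^*$ with $i<j<k$, with no repeated index possible. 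The only obstruction to absorbing a repeated index is a characteristic-$2$ phenomenon (in characteristic $\neq 2$, the existence of a squarefree cubic still allows one to produce a pure cube or an $x_1^{*(2)}x_2^*$ term by a generic linear substitution, because $2$ is invertible and the relevant coefficient one extracts by substitution is a nonzero multiple of $2$), so this final case is exactly case (\ref{61.1.c}), forced to live in characteristic two.

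The main obstacle, and the step that requires the most care, is the characteristic bookkeeping in the substitutions: in a divided power algebra one must track the multinomial coefficients that appear when one expands $(x_1^*+c\,x_j^*)^{(n)}$ and similar expressions, and it is precisely the vanishing of $2$ or $3$ in small characteristic that decides which of (\ref{61.1.a}), (\ref{61.1.b}), (\ref{61.1.c}) one lands in. Concretely, the argument to justify case (\ref{61.1.c}) is: assume no basis change makes a cube coefficient nonzero; evaluate $\phi_3$ on suitable symmetric powers of generic linear combinations of the $x_i$; the resulting scalar is a polynomial (over the prime field) in the coefficients whose coefficients are binomial numbers like $\binom{3}{1}=3$, $\binom{3}{2}=3$, $3!=6$, $2$; requiring all of these to vanish identically forces either the corresponding coefficient $\a_{i,j,k}$ to vanish or the characteristic to be $2$ (for the terms controlled by $2$) or $3$. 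A short case analysis then shows that if the characteristic is $3$ one can always reach form (\ref{61.1.b}), and only characteristic $2$ can force (\ref{61.1.c}). I would organize this as: (i) reduce to showing that if $\phi_3$ is not in the form (\ref{61.1.a}) or (\ref{61.1.b}) for any basis, then characteristic $2$ and $\phi_3$ is squarefree; (ii) prove that by evaluating against generic linear forms and extracting coefficients, using that a Vandermonde-type / generic-point argument lets me read off individual monomial coefficients; (iii) assemble.
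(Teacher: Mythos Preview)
Your overall plan matches the paper's, but the execution of case~(\ref{61.1.a}) contains a genuine error. The substitution $x_j^*\mapsto x_j^*-cx_1^*$ does \emph{not} kill the $x_1^{*(2)}x_j^*$ term in any characteristic: writing $x_j^*=\tilde x_j^*+c\tilde x_1^*$ gives $x_1^{*(2)}x_j^*=\tilde x_1^{*(2)}\tilde x_j^*+3c\,\tilde x_1^{*(3)}$, and the offending monomial survives. The correct move is the opposite one---shift $x_1^*$, not $x_j^*$. If $\phi_{1,0}$ is the coefficient of $x_1^{*(2)}$, set $x_1^*=X^*-\phi_{1,0}$; then the divided-power identity
\[
(X^*-\phi_{1,0})^{(3)}=X^{*(3)}-X^{*(2)}\phi_{1,0}+X^*\phi_{1,0}^{(2)}-\phi_{1,0}^{(3)}
\]
makes the $X^{*(2)}$ contribution from the cube cancel the one from $x_1^{*(2)}\phi_{1,0}$ exactly, in \emph{every} characteristic. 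So there is no characteristic-$3$ obstruction, and your fallback claim that in characteristic~$3$ the $x_1^{*(2)}x_j^*$ terms are ``harmless to leave inside $x_1^*\phi_{2,0}$'' is also wrong: such a monomial involves $x_1^*$ to the second divided power and cannot be written as $x_1^*$ times an element of $D_2(\bigoplus_{j\ge2}\kk x_j^*)$.

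For case~(\ref{61.1.c}) you are working much harder than necessary. The paper does not argue over all bases or invoke generic linear forms; it simply observes that if, in the \emph{given} basis, every cube and every squared-times-linear coefficient vanishes, then $\phi_3$ is already squarefree there. If in addition the characteristic is not~$2$ and some monomial $x_1^*x_2^*x_3^*$ appears, the single substitution $x_2^*\mapsto y_2^*+x_1^*$ produces a term $x_1^*\cdot x_1^*\cdot x_3^*=2x_1^{*(2)}x_3^*$, which is nonzero, and one is back in case~(\ref{61.1.b}). No Vandermonde or case analysis by characteristic is needed.
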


\begin{Remark}
The case (\ref{61.1.c}) is not very interesting when $d=4$; see (\ref{proof of ML}).
\end{Remark}

\begin{proof}
 Begin with an arbitrary basis 
$y_1^*,\dots,y_d^*$ for $U^*$.

\begin{claim-no-advance}
\label{5.Claim 1}  
After a  change of basis, 
\begingroup\allowdisplaybreaks\begin{align}\label{5.form}\a\phi_3={}&x_1^{*(3)}+x_1^{*(2)}\phi_{1,0}+x_1^{*}\phi_{2,0}+\phi_{3,0}, \text{ with $\a\neq 0$, or}\\
\label{5.form2}\phi_3={}&\phantom{x_1^{*(3)}+{}}x_1^{*(2)}\phi_{1,0}+x_1^{*}\phi_{2,0}+\phi_{3,0},\text{ with $\phi_{1,0}\neq 0$, or}\\
\label{5.form3}\phi_3={}&\sum_{1\le i<j<k\le d}\a_{i,j,k}x_i^*x_j^*x_k^*,
\end{align}\endgroup 
with $\a,\a_{i,j,k}$ in $\kk$ and $\phi_{i,0}\in D_i(\bigoplus_{2\le j\le d}\kk x_j^*)$.\end{claim-no-advance}

\ms\noindent{\bf Proof of Claim~\ref{5.Claim 1}.} 
Write $\phi_3$ as 
$$\phi_3=\sum\limits_{e_1+\dots+e_d=3}\a_{e_1,\dots,e_d}y_1^{*(e_1)}
y_2^{*(e_2)}
\cdots
y_{d-1}^{*(e_{d-1})}y_d^{*(e_d)},$$ with $\a_{e_1\dots,e_d}\in \kk$.
If any of the parameters \begin{equation}\label{5.10.2}
\a_{0,\dots,0,3,0,\dots,0} 
\end{equation}
 is nonzero, then $\phi_3$ has the form of (\ref{5.form}). 

If the parameters of (\ref{5.10.2}) are zero; but any of the parameters
\begin{equation}\label{5.10.3}
\a_{0,\dots,0,2,0,\dots,0,1,0\dots,0}\end{equation}
are nonzero, then $\phi_3$ has the form of (\ref{5.form2}). 
If all of the parameters in (\ref{5.10.2}) and (\ref{5.10.3}) are zero, then $\phi_3$ has the form of (\ref{5.form3}). This completes the proof of Claim~\ref{5.Claim 1}.

\ms

Three observations are needed to complete the proof of Lemma~\ref{61.1}. First, if $\phi_3$ has the form of (\ref{5.form}), then the change of basis $x_1^*=X^*-\phi_{1,0}$ puts $\phi_3$ into the form of (\ref{61.1.a}) because 
$$(X^*-\phi_{1,0})^{(3)}=X^{*(3)}-X^{*(2)}\phi_{1,0}+X^*\phi_{1,0}^{(2)}-\phi_{1,0}^{(3)}.$$
Second, if $\phi_3$ has the form of
(\ref{5.form2}), then one may change the basis of $U^*$ again and choose the new ``$x_2^*$'' to equal the old $\phi_{1,0}$.
Third, 
if the characteristic of $\kk$ is not two, then any nonzero element of $D_3U^*$ of form (\ref{5.form3}) can be transformed into an element of $D_3U^*$ of form (\ref{5.form2}). In particular,  an element of form (\ref{5.form3}) in which
$x_1^*x_2^*x_3^*$ actually appears 
 becomes an element of form (\ref{5.form2})
if one uses the basis $x_1^*, y_2^*, x_3^*,\dots, x_d^*$  
for $U^*$ with $x_2^*=y_2^*+x_1^*$ because
$$x_1^*x_1^*=2x_1^{*(2)}$$ and this is a unit times $x_1^{*(2)}$ when the characteristic of $\kk$ is 
 not two. \end{proof}

Lemma~\ref{55.Claim 3} ensures that we can record ``$\phi_{2,0}$'' from Lemma~\ref{61.1}.(\ref{61.1.a}) in an efficient manner.
\begin{lemma}
\label{55.Claim 3}
Let $U_0$ be a $d_0$-dimensional vector space over the field $\kk$ and $\phi_{2,0}$ be an element of $D_2U_0^*$. Let $p_{\phi_{2,0}}:U_0\to U_0^*$ be the homomorphism defined by $p_{\phi_{2,0}}(\ell)=\ell\phi_{2,0}$ and let $r$  be the rank of $p_{\phi_{2,0}}$. Then there is a basis $\ell_1, \dots,\ell_{d_0}$ for $U_0$ and corresponding dual basis $\ell_1^*, \dots,\ell_{d_0}^*$ for $U_0^*$ such that
$\phi_{2,0}\in D_2(\kk\ell_1^*\p\dots\p\kk\ell_r^*)$. In particular, if $r=1$, then $\phi_{2,0}=a\ell_1^{*(2)}$ for some nonzero element $a$ in $\kk$; and if $r=2$, then \begin{equation}\label{3.2.1}\phi_{2,0}=a\ell_1^{*(2)}+b\ell_1^*\ell_2^*+c\ell_2^{*(2)},\end{equation}
for some elements $a,b,c$ of $\kk$ with $ac-b^2$ not equal to zero.\end{lemma}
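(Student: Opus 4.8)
The plan is to recognize $p_{\phi_{2,0}}$ as the linear map attached to a symmetric bilinear form and then to split off its radical. Define $B\colon U_0\times U_0\to\kk$ by $B(\ell,\ell')=\phi_{2,0}(\ell\ell')$; this is a symmetric bilinear form, and by the definition of the $\Sym_\bullet U_0$-module structure on $D_\bullet U_0^*$ one has $p_{\phi_{2,0}}(\ell)(\ell')=(\ell\phi_{2,0})(\ell')=\phi_{2,0}(\ell\ell')=B(\ell,\ell')$ for all $\ell,\ell'\in U_0$. Thus $p_{\phi_{2,0}}\colon U_0\to U_0^*$ is exactly the map induced by $B$, its kernel $K:=\ker p_{\phi_{2,0}}$ is the radical of $B$, and rank-nullity gives $\dim K=d_0-r$.

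First I would pick any vector-space complement $W$ of $K$ in $U_0$, so that $U_0=W\p K$ with $\dim W=r$; choose a basis $\ell_1,\dots,\ell_r$ of $W$ and extend it by a basis $\ell_{r+1},\dots,\ell_{d_0}$ of $K$, and let $\ell_1^*,\dots,\ell_{d_0}^*$ be the dual basis of $U_0^*$. Expand $\phi_{2,0}$ in the basis of $D_2U_0^*$ dual to the degree-two monomials in $\ell_1,\dots,\ell_{d_0}$; in divided-power notation this basis is $\{\ell_i^{*(2)}\}_i\cup\{\ell_i^*\ell_j^*\mid i<j\}$, so write $\phi_{2,0}=\sum_i b_i\,\ell_i^{*(2)}+\sum_{i<j}c_{ij}\,\ell_i^*\ell_j^*$. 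Pairing against $\ell_i^2$ and against $\ell_i\ell_j$ gives $b_i=\phi_{2,0}(\ell_i^2)=B(\ell_i,\ell_i)$ and $c_{ij}=\phi_{2,0}(\ell_i\ell_j)=B(\ell_i,\ell_j)$. Since each $\ell_j$ with $j>r$ lies in the radical of $B$, every coefficient $b_j$ or $c_{ij}$ involving an index larger than $r$ is zero, which is precisely the assertion $\phi_{2,0}\in D_2(\kk\ell_1^*\p\dots\p\kk\ell_r^*)$.

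For the two final assertions I would note that the $r\times r$ Gram matrix $(B(\ell_i,\ell_j))_{1\le i,j\le r}$ of the restriction $B|_W$ is invertible: if $\ell\in W$ has $B(\ell,\ell_i)=0$ for $1\le i\le r$ then, since $B$ also annihilates $K$, we get $\ell\in W\cap K=0$. When $r=1$ this says $a:=b_1=B(\ell_1,\ell_1)\neq 0$ and $\phi_{2,0}=a\,\ell_1^{*(2)}$. When $r=2$, setting $a=B(\ell_1,\ell_1)$, $b=B(\ell_1,\ell_2)$, $c=B(\ell_2,\ell_2)$, invertibility of the $2\times 2$ Gram matrix is exactly the condition $ac-b^2\neq 0$, giving (\ref{3.2.1}).

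I do not expect a genuine obstacle here: this is the standard procedure of splitting off the radical of a symmetric bilinear form, valid over any field. The only points needing attention are the divided-power bookkeeping — checking that $\{\ell_i^{*(2)}\}_i\cup\{\ell_i^*\ell_j^*\mid i<j\}$ is indeed the basis of $D_2U_0^*$ dual to the degree-two monomials, so that the coefficients of $\phi_{2,0}$ really are the scalars $B(\ell_i,\ell_j)$ — and the observation that no characteristic-two pathology intervenes, because $B$ is here an honest symmetric bilinear form (its diagonal values $\phi_{2,0}(\ell^2)$ may be nonzero), not the alternating polarization of a quadratic form.
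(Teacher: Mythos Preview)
Your proof is correct and follows essentially the same approach as the paper: choose a basis $\ell_1,\dots,\ell_{d_0}$ of $U_0$ adapted to the kernel of $p_{\phi_{2,0}}$ (the last $d_0-r$ vectors in the kernel, the first $r$ spanning a complement), expand $\phi_{2,0}$ in the corresponding divided-power basis of $D_2U_0^*$, and use $\ell_h\phi_{2,0}=0$ for $h>r$ to kill all coefficients involving $\ell_h^*$. Your framing via the symmetric bilinear form $B(\ell,\ell')=\phi_{2,0}(\ell\ell')$ and its radical is a bit more conceptual than the paper's direct computation, but the argument is the same.
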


\begin{proof}
 Let $\ell_1,\dots,\ell_{d_0}$ be a basis for  $U_0$ such that $\ell_1\phi_{2,0},\dots,\ell_r\phi_{2,0}$ is a basis for the image of $p_{\phi_{2,0}}$ and $\ell_{r+1},\dots,\ell_{d_0}$ are in $\ker p_{\phi_{2,0}}$. Let $\ell_1^*, \dots,\ell_{d_0}^*$ be the corresponding dual basis for $U_0^*$. Write $\phi_{2,0}$ in terms of the basis $$\{\ell_1^{*(i_1)}\dots\ell_{d_0}^{*(i_{d_0})}\mid \sum_j i_j=2\}$$ for $D_2U_0^*$. The hypothesis that $\ell_h\phi_{2,0}=0$ for $r+1\le h\le 3$ ensures that  $\phi_{2,0}$ is in $D_2(\kk\ell_1^*\p\dots\p\kk\ell_r^*)$.  Furthermore, if $r=1$, then the coefficient of $\ell_1^{*(2)}$ can not be zero; and if $r=2$, then $\ell_1\phi_{2,0}$ and $\ell_2\phi_{2,0}$ must be linearly independent (hence, $\phi_{2,0}$ must have the form of (\ref{3.2.1}) with $ac-b^2\neq 0$.) \end{proof}

Lemma~\ref{3.3} is redundant in the sense that the assertion is a consequence of Lemma~\ref{55.Claim 3} when $r=1$. On the other hand, the constructive nature of the argument makes this statement a valuable  addition to one's tool bag.

\begin{lemma}
\label{3.3} 
Let $\kk$ be a field, $U_0^*$ be a two-dimensional vector space over $\kk$, and $\phi_{2,0}\in D_2U_0^*$. If $\dim \{\ell\phi_{2,0}\mid \ell\in U_0\}$ is at most one, then there is a basis $z^*,w^*$ for $U_0^*$ such that $\phi_{2,0}=az^{*(2)}$ for some $a\in \kk$.\end{lemma}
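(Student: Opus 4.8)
The plan is to compute in explicit coordinates and reduce the statement to an elementary ``completing the square'' over the field $\kk$. First I would fix an arbitrary basis $y_1^*,y_2^*$ for $U_0^*$, let $y_1,y_2$ be the dual basis for $U_0$, and write
\[
\phi_{2,0}=a\,y_1^{*(2)}+b\,y_1^*y_2^*+c\,y_2^{*(2)},\qquad a,b,c\in\kk .
\]
Using the $\Sym_\bullet U_0$-module structure on $D_\bullet U_0^*$ recorded in the Conventions, one checks that $y_1\phi_{2,0}=a\,y_1^*+b\,y_2^*$ and $y_2\phi_{2,0}=b\,y_1^*+c\,y_2^*$, so that for $\ell=sy_1+ty_2$ one has $\ell\phi_{2,0}=(sa+tb)y_1^*+(sb+tc)y_2^*$. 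Hence $\{\ell\phi_{2,0}\mid\ell\in U_0\}$ is the column space of the symmetric matrix $\left(\begin{smallmatrix}a&b\\ b&c\end{smallmatrix}\right)$, and the hypothesis that this subspace has dimension at most one is equivalent to the single scalar identity $ac-b^2=0$.

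Next I would split into two cases. If $a=0$, then $ac-b^2=0$ forces $b^2=0$, hence $b=0$ because $\kk$, being a field, has no nonzero nilpotents; thus $\phi_{2,0}=c\,y_2^{*(2)}$ and the basis $z^*=y_2^*$, $w^*=y_1^*$ works. If $a\ne 0$, I would complete the square by setting $z^*=y_1^*+\tfrac ba\,y_2^*$ and $w^*=y_2^*$; this is a basis of $U_0^*$ since the change-of-basis matrix is unitriangular. Expanding in the divided power algebra gives $a\,z^{*(2)}=a\,y_1^{*(2)}+b\,y_1^*y_2^*+\tfrac{b^2}{a}\,y_2^{*(2)}$, and the relation $c=b^2/a$ (which is exactly $ac-b^2=0$) shows $a\,z^{*(2)}=\phi_{2,0}$. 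Since the possibility $\phi_{2,0}=0$ is subsumed in the case $a=0$, this is exhaustive.

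There is no genuine obstacle here; the only points requiring a little care are the bookkeeping of the action of $U_0$ on $D_2U_0^*$ and the remark that $b^2=0$ forces $b=0$ over a field, which is what lets the argument run in every characteristic (in particular no hypothesis on the characteristic of $\kk$ is needed). As the statement itself advertises, the value of this proof is its constructive character: it exhibits the explicit linear form $z^*=y_1^*+\tfrac ba\,y_2^*$, which can be reused directly elsewhere, whereas the bare existence assertion already follows from the $r=1$ case of Lemma~\ref{55.Claim 3}.
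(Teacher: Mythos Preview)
Your proof is correct and follows essentially the same route as the paper's: write $\phi_{2,0}$ in coordinates, deduce $ac-b^2=0$ from the rank-one hypothesis, and complete the square when the leading coefficient is a unit. The only cosmetic difference is in the case split---the paper separates ``all coefficients zero'' from ``some coefficient nonzero, WLOG $a\neq0$'', whereas you split on $a=0$ versus $a\neq0$ and observe that $a=0$ forces $b=0$---but the substance is identical.
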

\begin{proof} 
Let $Z^*,W^*$ be a basis for $U_0^*$ and $Z,W$ be the corresponding basis for $U_0$. Write $\phi_{2,0}=aZ^{*(2)}+bZ^*W^*+cW^{*(2)}$ for some $a,b,c\in \kk$.
The hypothesis ensures that $$Z\phi_{2,0}=aZ^*+bW^*\quad\text{and}\quad W\phi_{2,0}=bZ^*+cW^*$$ are linearly dependent. It follows that $ac=b^2$.

If $a$, $b$, and $c$ are all zero, then the conclusion  holds automatically. Henceforth, we assume that at least one of the parameters $a$, $b$, or $c$ is nonzero; so, in particular, $a$ or $c$ is nonzero.  Without loss of generality, we assume $a\neq 0$. In this case, 
$$\ts aZ^{*(2)}+bZ^*W^*+cW^{*(2)}=a(Z^{*(2)}+\frac baZ^*W^*+\frac {b^2}{a^2}W^{*(2)})=a(Z^*+\frac ba W^*)^{(2)}$$because $\frac{b^2}{a^2}=\frac ca$. At this point, we rename $z^*= Z^*+\frac ba W^*$ and $w^*=W^*$. The claim is  established.
\end{proof}

\begin{chunk}
{\bf The proof of Lemma~\ref{ML}.}
\label{proof of ML}There are many cases depending upon the form of $\phi_3$ in the sense of Lemma~\ref{61.1}. 

If $\phi_3$ has the form of Lemma~\ref{61.1}.(\ref{61.1.a}), then let $r$ be the rank of $p_{\phi_{2,0}}$ as described in Lemma~\ref{55.Claim 3}. Of course, $0\le r\le 3$. If $r=2$, then Lemma~\ref{ML} is established in Proposition~\ref{58.2}; if $r=1$, then Lemma~\ref{ML} is established in Proposition~\ref{5.2}; and if $r$ is either $0$ or $3$, then Lemma~\ref{ML} is established in Lemma~\ref{case3a}.

If $\phi_3$ has the form of Lemma~\ref{61.1}.(\ref{61.1.b}), then there are two cases as described in \ref{Two Cases}. Case \ref{Case1} of \ref{Two Cases} 
is established in Proposition~\ref{NYW} and Case \ref{Case2}  is established in Proposition~\ref{6.1}.

If $\phi_3$ has form of Lemma~\ref{61.1}.(\ref{61.1.c}), then $\kk$ has characteristic two and $$\phi_3=ay^*z^*w^*+bx^*z^*w^*+cx^*y^*w^*+dx^*y^*z^*$$ for some parameters $a,b,c,d$ from $\kk$. If $\ell=ax^*+by^*+cz^*+dw^*$, then $$\ell\phi_3=2(abz^*w^*+acy^*w^*+ady^*z^*+bcx^*w^*+bdx^*z^*+cdx^*y^*)=0.$$ Thus, $\ell\phi_3=0$ for
some nonzero $\ell\in U$. In this case, Lemma~\ref{ML} makes no assertion about $\Gamma_{\phi_3}$. 
\qed
\end{chunk}

\section{The Macaulay inverse system has a cubic term and $r=2$.}\label{cubic,r=2}

We prove Lemma~\ref{ML} when $\phi_3$ has the form of Lemma~\ref{61.1}.(\ref{61.1.a}), with $r=2$, in the sense of Lemma~\ref{55.Claim 3}. This means that, in the language of Data~\ref{28.4},  there is a basis $x^*$, $y^*$, $z^*$, $w^*$ for $U^*$ so that $\phi_3=x^{*(3)}+\phi_{2,0}x^*+\phi_{3,0}$ with $\phi_{2,0}\in D_2 \kk(z^*,w^*)$, $\phi_{3,0}\in D_3 \kk(y^*,z^*,w^*)$, and $z\phi_{2,0}$ and $w\phi_{2,0}$ linearly independent. In particular, the basis elements $x^{*(2)}y^*$,  $x^{*(2)}z^*$, $x^{*(2)}w^*$, $x^*y^{*(2)}$,  $x^*y^*z^*$, and $x^*y^*w^*$ of $D_3U^*$ all appear in $\phi_3$ with coefficient zero.\begin{proposition}
\label{58.2} 
Let $P_0$ be a domain, $U$ be a free $P_0$-module of rank four with dual module $U^*=\Hom_{P_0}(U,P_0)$, $x,y,z,w$ be a basis for $U$ with dual basis $x^*,y^*,z^*,w^*$ for $U^*$. Let $\phi_3\in D_3U^*$ have the form
\begin{equation}\label{58.2.1}\phi_3=\begin{cases}\phantom{+}x^{*(3)}+ax^*z^{*(2)}+bx^*z^*w^*+cx^*w^{*(2)}+dy^*z^{*(2)}\\+ey^*z^*w^*+
fy^*w^{*(2)}+gy^{*(2)}z^*+hy^{*(2)}w^*+iy^{*(3)}\\+jz^{*(3)}+kz^{*(2)}w^*+lz^*w^{*(2)}+mw^{*(3)},\end{cases}\end{equation} for elements $a,\dots,m$ of $P_0$. Assume that 
\begin{enumerate}[\rm(a)]\item$ac-b^2\neq 0$ in $P_0$,
\item$\ell\phi_3\neq 0$ for all nonzero $\ell$ in $U$, and \item $\Gamma_{\phi_3}=0$. \end{enumerate}
 Then $2$ is equal to $0$ in $P_0$, and, after a change of basis, $\phi_3=\a X^{*(3)}+Y^*Z^*W^*$ for some basis $X^*$, $Y^*$, $Z^*$ of $U^*$ and some nonzero $\a\in P_0$.\end{proposition}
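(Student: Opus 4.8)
The plan is to exploit the hypothesis $\Gamma_{\phi_3}=0$ as a system of polynomial equations in the coefficients $a,\dots,m$, and to show that these equations force $2=0$ in $P_0$ together with enough vanishing among the coefficients to allow a normalizing change of basis. By Observation~\ref{28.9} and Lemma~\ref{55.Claim 3}, the hypothesis $\Gamma_{\phi_3}=0$ is equivalent to $\Gamma_{\phi_3}(\ell^{(4)}\t x\w y\w z\w w)=0$ for \emph{every} $\ell\in U$; writing $\ell=sx+ty+uz+vw$ with $s,t,u,v$ new indeterminates, this is a single identity in $\bw^4U^*\cong P_0[s,t,u,v]$ whose coefficients (as a polynomial in $s,t,u,v$) must all vanish. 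First I would compute, for each pair of basis vectors $p,q\in\{x,y,z,w\}$, the element $pq\phi_3\in U^*$ using the module action recalled in the Conventions; since $\phi_3$ has the special shape (\ref{58.2.1}) with many zero coefficients, several of these are short. In particular $x^2\phi_3=x^*$ (only the $x^{*(3)}$ term contributes, modulo the $ac-b^2$ convention), which is the key simplification: the $s^4$-coefficient of the determinant then has the single term coming from $x^2\phi_3\w xy\phi_3\w xz\phi_3\w xw\phi_3$, and because $x^2\phi_3=x^*$ this reduces to a $3\times 3$ determinant in the $y^*,z^*,w^*$ coordinates of $xy\phi_3$, $xz\phi_3$, $xw\phi_3$.

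Next I would extract the low-degree-in-$t$ coefficients. The coefficient of $s^3t$ (and of other monomials where $\ell$ is close to $x$) should, after using $x^2\phi_3=x^*$ and $xz\phi_3=a z^*+b w^*$, $xw\phi_3=b z^*+c w^*$, $xy\phi_3=gz^*+hw^*$ (reading off (\ref{58.2.1})), produce equations that are linear or quadratic in the ``upper'' coefficients $a,b,c,d,e,f,g,h,i,j,k,l,m$. I expect the combination of: (i) the $s^4$ coefficient, which is a nonzero multiple of $gc - \text{(something)}\cdot h\cdots$ but in fact vanishes identically because the relevant columns of $p_{\phi_3}$ restricted to the $x^*$-free part land in $\kk z^*\p\kk w^*$ — so this one is automatically $0$ and carries no information; and (ii) the coefficients of $s^3t, s^3u, s^3v$, which will be the first genuine constraints. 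Iterating down in the degree of $s$, using at each stage the $ac-b^2\neq 0$ hypothesis (and the domain hypothesis on $P_0$) to cancel, I would drive all of $g,h,i,j,k,l,m$ and then $d,e,f$ to zero, or to expressions in $a,b,c$ times a factor of $2$; the characteristic-two conclusion will emerge as the requirement that a certain coefficient (morally the ``$2$'' appearing in the $\ell_1\ell_2\phi_3\w\ell_2\ell_3\phi_3\w\ell_1\ell_3\phi_3$-type terms in Example~\ref{28.8}) be a zero divisor, hence zero since $P_0$ is a domain. Once $2=0$ and the cubic-in-$y,z,w$ part $\phi_{3,0}$ and the mixed terms are killed, $\phi_3$ has the shape $x^{*(3)}+x^*\phi_{2,0}$ with $\phi_{2,0}=az^{*(2)}+bz^*w^*+cw^{*(2)}$, $ac-b^2\neq0$; in characteristic two one may complete the binary quadratic form $\phi_{2,0}$ (or rather absorb it) — but more directly, I would recognize that $x^{*(3)}+x^*\phi_{2,0}$ with $\phi_{2,0}$ nondegenerate is not yet of the desired form, so a further genuine constraint from $\Gamma_{\phi_3}=0$ must still be in play, forcing instead that $\phi_3$ itself is projectively equivalent to $x^{*(2)}y^*$-free data realizing $\a X^{*(3)}+Y^*Z^*W^*$; concretely, after $2=0$ one diagonalizes using that in characteristic two $z^*w^*$ is the ``hyperbolic'' form and rewrites $x^{*(3)}+x^*(z^*w^*)=x^*(x^{*(2)}+z^*w^*)$, from which a linear change on $z^*,w^*$ (shearing in $x^*$) yields $X^*Z^*W^*$-type terms, and a final rescaling produces $\a X^{*(3)}+Y^*Z^*W^*$ with $Y^*=x^*$.

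The main obstacle, I expect, is the bookkeeping in the second step: $\Gamma_{\phi_3}(\ell^{(4)}\t\omega_U)$ expands (via the formulas of Example~\ref{28.8}, summed with multinomial weights against $s,t,u,v$) into a large polynomial, and one must choose the right monomials in $s,t,u,v$ whose coefficients give a \emph{triangular} system in $a,\dots,m$ so that the vanishing can be read off one coefficient at a time rather than solving everything simultaneously. Using $x^2\phi_3=x^*$ to collapse every wedge containing the $\ell=x$ ``direction'' down to a $3\times3$ determinant is what makes this tractable, and the $ac-b^2\neq0$ hypothesis is exactly what guarantees the $3\times3$ Gram-type determinant of $z^*,w^*$-vectors is nonzero so that it can be used as a pivot. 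I would also need Lemma~\ref{3.3} or the $r=2$ normal form of Lemma~\ref{55.Claim 3} only insofar as it has already been applied to reach (\ref{58.2.1}); no new structural input beyond the determinantal identities and the domain hypothesis should be required. If the direct expansion proves too unwieldy, a fallback is to first specialize $\ell$ to the two-parameter family $sx+uz+vw$ (setting $t=0$), handle the binary form $\phi_{2,0}$ and the pure $z,w$ cubic terms $j,k,l,m$ with a three-variable sub-argument (invoking the philosophy of Lemma~\ref{lemma3}/Section~\ref{3var} referenced in the introduction), then reintroduce the $y$-direction to kill $d,e,f,g,h,i$.
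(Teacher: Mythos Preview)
Your overall strategy---evaluate $\Gamma_{\phi_3}$ on enough elements of $D_4U$ to force relations among $a,\dots,m$---is exactly the paper's approach, and the observation that $x^2\phi_3=x^*$ while $xy\phi_3=0$ is the right simplifying lever. (Note: $xy\phi_3=0$, not $gz^*+hw^*$; there are no terms in (\ref{58.2.1}) containing both $x^*$ and $y^*$. This error propagates but is easily fixed.)

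The genuine gap is in your endgame. You anticipate ``driv[ing] all of $g,h,i,j,k,l,m$ and then $d,e,f$ to zero'' and landing on $\phi_3=x^{*(3)}+x^*\phi_{2,0}$. But that form annihilates $y$, i.e.\ $y\phi_3=0$, contradicting hypothesis (b). You notice the tension but resolve it incorrectly: your proposed rewrite $x^{*(3)}+x^*z^*w^*=x^*(x^{*(2)}+z^*w^*)$ followed by a change of variables cannot produce $\alpha X^{*(3)}+Y^*Z^*W^*$ with four linearly independent $X^*,Y^*,Z^*,W^*$, since your expression involves only three basis vectors and $Y^*=x^*$ leaves $y$ still killing $\phi_3$. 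The quadratic form $x^{*(2)}+z^*w^*$ has rank three, not two, so it does not factor as a product of two linear forms.

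What actually happens is that $e$ (the coefficient of $y^*z^*w^*$) does \emph{not} vanish; on the contrary, one shows $e\neq0$, and then the relations force $a=c=d=f=g=h=i=j=k=l=m=0$ and $2=0$. The surviving form is
\[
\phi_3=x^{*(3)}+bx^*z^*w^*+ey^*z^*w^*=x^{*(3)}+(bx^*+ey^*)z^*w^*,
\]
and the new basis vector $Y^*=bx^*+ey^*$ (genuinely involving $y^*$) gives the desired shape. The argument that $e\neq0$ is a case split: if $e=0$ and $f=0$ then $y\phi_3=0$; if $e=0$ and $f\neq0$ then earlier relations force $a=b=0$, contradicting $ac-b^2\neq0$. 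So your plan needs to be revised to track $e$ separately rather than lumping it with the coefficients that die.
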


\begin{proof} 
Let $\omega_U$ and $\omega_{U^*}$ represent the bases
$x\w y\w z\w w$ and  $x^*\w y^*\w z^*\w w^*$ of $\bw^4U$ and $\bw^4U^*$, respectively.

\begin{claim-no-advance}
\label{58.2.2} The parameters $g$, $h$, and $i$ are zero.\end{claim-no-advance} 

\ms \noindent{\bf Proof of Claim \ref{58.2.2}.} We first compute 
\begin{align}\Gamma_{\phi_3}(x^{(3)}y\t \omega_U)={}&i(ac-b^2)\omega_{U^*},\notag\\
\Gamma_{\phi_3}(x^{(3)}z\t \omega_U)={}&g(ac-b^2)\omega_{U^*},\text{ and}\label{58.2.3}\\
\Gamma_{\phi_3}(x^{(3)}w\t \omega_U)={}&h(ac-b^2)\omega_{U^*}.\notag\end{align}
Let $\ell$ be an arbitrary element of $U$. The expansion of $\Gamma_{\phi_3}(x^{(3)}\ell\t \omega_U)$ has four summands; but three of the summands involve a factor of $xy\phi_3=0$; consequently,
\begingroup\allowdisplaybreaks\begin{align*}&\Gamma_{\phi_3}(x^{(3)}\ell\t \omega_U)\\{}={}&x^2\phi_3\w y\ell\phi_3\w xz\phi_3\w xw\phi_3
=x^*\w y\ell\phi_3\w (az^*+bw^*)\w (bz^*+cw^*)\\
{}={}&(ac-b^2) x^*\w y\ell\phi_3\w z^*\w w^*.\end{align*}\endgroup
Insert 
\begingroup\allowdisplaybreaks\begin{align*}
y^2\phi_3={}&(gz^*+hw^*+iy^*)\\
yz\phi_3={}&(dz^*+ew^*+gy^*)\\
yw\phi_3={}&(ez^*+fw^*+hy^*)\end{align*}\endgroup
into the calculation of $\Gamma_{\phi_3}(x^{(3)}\ell\t \omega_U)$ in order to obtain
(\ref{58.2.3}). Combine the hypotheses that $\Gamma_{\phi_3}$ is identically zero, but $ac-b^2$ is nonzero in the domain $P_0$ to conclude that $g=h=i=0$.
This completes the proof of Claim~\ref{58.2.2}.

\ms

\begin{claim-no-advance}
\label{58.2.4} The parameter $d$ is zero.\end{claim-no-advance} 

\ms \noindent{\bf Proof of Claim \ref{58.2.4}.} We first compute that if
$g=h=i=0$, then 
\begin{align}
\Gamma_{\phi_3}(x^{(2)}z^{(2)}\t \omega_U)={}&(-cd^2+2bde-ae^2)\omega_{U^*}\label{58.2.5}\\
\Gamma_{\phi_3}(z^{(4)}\t \omega_U)={}&(ae-bd)^2 \omega_{U^*}\notag\end{align}
The expansion of $\Gamma_{\phi_3}(x^{(2)}z^{(2)}\t \omega_U)$ consists of six summands; four of the summands have a factor of $xy\phi_3=0$ or $xz\phi_3\w xz\phi_3=0$. It follows that 
\begingroup\allowdisplaybreaks\begin{align*}
&\Gamma_{\phi_3}(x^{(2)}z^{(2)}\t \omega_U)\\{}={}&
x^2\phi_3\w yz\phi_3\w xz\phi_3\w zw\phi_3+
x^2\phi_3\w yz\phi_3\w z^2\phi_3\w xw\phi_3\\
{}={}&\begin{cases}\phantom{+}
 x^*\w(dz^*+ew^*)\w(az^*+bw^*)\w(bx^*+ey^*+kz^*+lw^*)\\
+x^*\w(dz^*+ew^*)\w(ax^*+dy^*+jz^*+kw^*)\w (bz^*+cw^*)\end{cases}\\
{}={}&\Big(e(bd-ae)-d(cd-eb)\Big)\omega_{U^*}\\
{}={}&(-cd^2+2bde-ae^2)\omega_{U^*}.\end{align*}\endgroup
Also, one computes  
\begingroup\allowdisplaybreaks\begin{align*}
&\Gamma_{\phi_3}(z^{(4)}\t \omega_U)\\
{}={}&
xz\phi_3\w yz\phi_3\w z^2\phi_3\w zw\phi_3\\
{}={}&(az^*+bw^*)\w (dz^*+ew^*)\w (ax^*+dy^*+jz^*+kw^*)\w(bx^*+ey^*+kz^*+lw^*)\\
{}={}&(ae-bd)(z^*\w w^*)\w (ae-bd)(x^*\w y^*)\\
{}={}&(ae-bd)^2\omega_{U^*}.
\end{align*}\endgroup
Both formulas of (\ref{58.2.5}) have been established. The hypothesis that $\Gamma_{\phi_3}=0$ ensures that
$$0=-cd^2+2bde-ae^2\quad \text{and}\quad  0=(ae-bd)^2.$$
It follows that
$$0=a(-cd^2+2bde-ae^2)+(ae-bd)^2=d^2(b^2-ac).$$
The ring $P_0$ is a domain and $b^2-ac$ is not zero. We conclude that $d=0$. This completes the proof of Claim~\ref{58.2.4}.

\ms
In Lemma~\ref{calculation} we prove that if $d=g=h=i=0$, then
\begingroup\allowdisplaybreaks\begin{align}
\Gamma_{\phi_3}(xz^{(3)}\t \omega_U)={}&-e^2j \omega_{U^*},\label{first}\\
\Gamma_{\phi_3}(xyzw\t \omega_U)={}&2e^3 \omega_{U^*},\label{second}\\
\Gamma_{\phi_3}(xz^{(2)}w\t \omega_U)={}&(e^2k-2efj) \omega_{U^*},\label{third}\\
\Gamma_{\phi_3}(x^{(2)}w^{(2)}\t \omega_U)={}&(-ce^2+2bef-af^2) \omega_{U^*},\label{fourth}\\
\Gamma_{\phi_3}(xyw^{(2)}\t \omega_U)={}&e^2f \omega_{U^*},\label{fifth}\\
\Gamma_{\phi_3}(xzw^{(2)}\t \omega_U)={}&(e^2l-jf^2) \omega_{U^*},\label{sixth}\\
\Gamma_{\phi_3}(z^{(2)}w^{(2)}\t \omega_U)={}&(-2ace^2+2abef+a^2f^2) \omega_{U^*},\label{seventh}\\
\Gamma_{\phi_3}(xw^{(3)}\t \omega_U)={}&(-f^2k+2efl-e^2m) \omega_{U^*},\text{ and}\label{eighth}\\
\Gamma_{\phi_3}(w^{(4)}\t \omega_U)={}&(bf-ce)^2 \omega_{U^*}.\label{last}
\end{align}\endgroup

\begin{claim-no-advance}
\label{58.2.15}
The following assertions hold{\rm:}
\begin{enumerate}[\rm(a)]
\item\label{58.a}$ae=0$, \item\label{58.b}$ej=0$,\item\label{58.c} $2e=0$,\item\label{58.d} 
$ek=0$,\item\label{e} $ce^2+af^2=0$, \  \item\label{f}$ef=0$,\ \item\label{g}$-f^2j+e^2\ell=0$,\ \item\label{h}$af=0$,\ \item\label{i}$-f^2k-e^2m=0$, \ \text{and}\ \item\label{j}$ce-bf=0.$
\end{enumerate}\end{claim-no-advance}

\ms\noindent{\bf Proof of Claim~\ref{58.2.15}.} In the proof of Claim~\ref{58.2.4}, we saw that $(ae-bd)^2=0$ and $d=0$. The ring $P_0$ is a domain. It follows that $ae=0$.
Assertion~(\ref{58.b}) is a consequence of (\ref{first}) and the hypothesis that $P_0$ is a domain; (\ref{58.c}) follows from (\ref{second}); (\ref{58.d}) from (\ref{third}) and (\ref{58.c}); (\ref{e}) from (\ref{fourth}) and (\ref{58.c});  (\ref{f}) from (\ref{fifth}); (\ref{g}) from (\ref{sixth}); (\ref{h}) from (\ref{seventh}) and (\ref{58.c}); (\ref{i}) from (\ref{eighth}) and (\ref{58.c}); and (\ref{j}) from (\ref{last}). This completes the proof of Claim~\ref{58.2.15}.

\begin{claim-no-advance}
\label{58.2.16} The parameter $e$ is not equal to zero.\end{claim-no-advance} 

\noindent{\bf  Proof of Claim~\ref{58.2.16}.} This proof is by contradiction: suppose $e=0$. There are two cases: either $f=0$ or $f\neq 0$.

If $f=0$, then, 
$d=e=f=g=h=i=0$, $\phi_3$ is equal to
$$x^{*(3)}+ax^*z^{*(2)}+bx^*z^*w^*+cx^*w^{*(2)}
+jz^{*(3)}+kz^{*(2)}w^*+lz^*w^{*(2)}+mw^{*(3)},$$
and $y\phi_3=0$, which violates the hypothesis that $\ell\phi_3$ is nonzero for all nonzero $\ell$ in $U$. 

If $e=0$ and $f\neq 0$, then, according to Claim~\ref{58.2.15}, $af=bf=0$.
Thus, $$a=b=0;$$ however, the ambient hypothesis  guarantees that $b^2-ac\neq 0$.  
This completes the proof of Claim~\ref{58.2.16}.

\ms 
Use the fact that $e\neq0$, together with  
Claims~\ref{58.2.15}, \ref{58.2.4}, and \ref{58.2.2} to see that $$2=a=c=d=f=g=h=i=j=k=l=m=0.$$
In this case two is equal to zero in $P_0$ and
$$\phi_3=
x^{*(3)}
+bx^*z^*w^*+ey^*z^*w^*=x^{*(3)}+(bx^*+ey^*)z^*w^*,$$ 
which has the form $\phi_3=X^{*(3)}+Y^*Z^*W^*$ for some basis $X^*$, $Y^*$, $Z^*$, $W^*$ of $U^*$. \end{proof}

\begin{lemma}
\label{calculation}
If $\phi_3$ is given in {\rm(\ref{58.2.1})} with $d = g = h = i = 0$, then
 the assertions of {\rm (\ref{first})} to {\rm(\ref{last})} all hold. 
\end{lemma}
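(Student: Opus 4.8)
The plan is to evaluate the nine expressions $\Gamma_{\phi_3}(X\t\omega_U)$ directly from the explicit formulas recorded in Example~\ref{28.8} for $\dim U=4$. The whole thing is a formal polynomial identity in the coefficients $a,b,c,e,f,j,k,l,m$ (with $d,g,h,i$ set to $0$), so it holds over any commutative ring, in particular over $P_0$; no domain hypothesis is needed for this lemma.

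First I would record the ten ``two-fold'' products $\ell\ell'\phi_3\in U^*$ for $\ell,\ell'\in\{x,y,z,w\}$, obtained by reading off from (\ref{58.2.1}) the coefficients of those monomials of $\phi_3$ divisible by $\ell\ell'$ under the $\Sym_\bullet U$-module structure on $D_\bullet U^*$. With $d=g=h=i=0$ this gives $x^2\phi_3=x^*$, $xy\phi_3=y^2\phi_3=0$, $xz\phi_3=az^*+bw^*$, $xw\phi_3=bz^*+cw^*$, $yz\phi_3=ew^*$, $yw\phi_3=ez^*+fw^*$, $z^2\phi_3=ax^*+jz^*+kw^*$, $zw\phi_3=bx^*+ey^*+kz^*+lw^*$, and $w^2\phi_3=cx^*+fy^*+lz^*+mw^*$. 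The vanishing of $xy\phi_3$ and $y^2\phi_3$ is exactly the feature that makes the remaining bookkeeping manageable.

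Next, for each monomial in (\ref{first})--(\ref{last}) I would select the matching formula of Example~\ref{28.8} after relabeling the basis: the $\ell_1^{(3)}\ell_2$ formula for $xz^{(3)}$ and for $xw^{(3)}$; the $\ell_1^{(2)}\ell_2\ell_3$ formula for $xz^{(2)}w$, $xyw^{(2)}$, and $xzw^{(2)}$; the $\ell_1^{(2)}\ell_2^{(2)}$ formula for $x^{(2)}w^{(2)}$ and $z^{(2)}w^{(2)}$; the $\ell_1\ell_2\ell_3\ell_4$ formula for $xyzw$; and the $\ell_1^{(4)}$ formula for $w^{(4)}$ --- always taking $\ell_1$ to be the variable carrying the highest divided power. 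Example~\ref{28.8} records $\Gamma_{\phi_3}(\,\cdot\,\t\ell_1\w\ell_2\w\ell_3\w\ell_4)$; since $(\ell_1,\dots,\ell_4)$ is then a permutation $\pi$ of $(x,y,z,w)$ and $\Gamma_{\phi_3}$ is linear in its second slot, the quantity I want equals $\operatorname{sgn}(\pi)$ times that expression. I then substitute the ten products into each wedge monomial. Most summands die on sight: any summand with a factor $xy\phi_3$ or $y^2\phi_3$ vanishes, and several surviving wedges vanish because three of their four factors lie in $\langle z^*,w^*\rangle$ or because a factor is repeated. The handful of survivors are $4\times4$ determinants in $x^*,y^*,z^*,w^*$, which I evaluate and express as multiples of $\omega_{U^*}=x^*\w y^*\w z^*\w w^*$ to obtain (\ref{first})--(\ref{last}).

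The argument is entirely mechanical, so the only real hazard is sign bookkeeping: keeping the abstract $\ell_i$ of Example~\ref{28.8} aligned with the concrete variables, carrying the factor $\operatorname{sgn}(\pi)$ from the reordering of $\omega_U$, and carrying the signs produced when each surviving wedge of four linear forms in $x^*,y^*,z^*,w^*$ is normalized to a multiple of $\omega_{U^*}$. I expect the $xyzw$ case and the three $\ell_1^{(2)}\ell_2\ell_3$ cases to be the fiddliest, since they carry five or six summands and involve the weight-two coefficient $2$ that is precisely the source of the characteristic-two phenomenon isolated in Proposition~\ref{58.2}. Finally, I would note that the $z^{(4)}$ and $x^{(2)}z^{(2)}$ computations already appear (with $d=0$) in the proof of Claim~\ref{58.2.4}, and that the identities (\ref{last}) and (\ref{seventh}) for the $w$-analogues follow by the same manipulation, which trims the labor a little.
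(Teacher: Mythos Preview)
Your proposal is correct and follows essentially the same route as the paper: record the ten products $\ell\ell'\phi_3$, expand each $\Gamma_{\phi_3}(X\t\omega_U)$ into its summands (the paper does this directly rather than citing Example~\ref{28.8}, but the expansions are identical), discard those containing $xy\phi_3=0$, $y^2\phi_3=0$, a repeated factor, or three factors in $P_0z^*\oplus P_0w^*$, and evaluate the survivors as $4\times4$ determinants. Your upfront tabulation of the products and your explicit attention to the permutation sign when matching Example~\ref{28.8} are minor organizational differences, not a different method.
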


\begin{proof} 
We prove (\ref{first}). The expansion of 
$\Gamma_{\phi_3}(xz^{(3)}\t \omega_U)$ consists of four summands. Three of the summands  contain a factor of $xy\phi_3=0$, or $xz\phi_3\w xz\phi_3=0$, or
\begin{equation}\label{rank2}\ts\bw^3 \kk(xw\phi_3,\ yz\phi_3,\ xz\phi_3,\ yw\phi_3)
\subseteq \bw^3(P_0 z^*\p P_0 w^*)=0.\end{equation}
It follows that 
\begingroup\allowdisplaybreaks\begin{align*}
\Gamma_{\phi_3}(xz^{(3)}\t \omega_U)={}& x^2\phi_3\w yz\phi_3\w z^2\phi_3\w zw\phi_3\\
{}={}&x^*\w ew^*\w(ax^*+jz^*+kw^*)\w (bx^*+ey^*+kz^*+lw^*)\\
{}={}&x^*\w ew^*\w jz^*\w ey^*=-e^2j\omega_{U^*}.\end{align*}\endgroup

\ms We prove  (\ref{second}). The expansion of 
$\Gamma_{\phi_3}(xyzw\t \omega_U)$ consists of twenty-four summands. Twenty-two of the summands have a factor of $$\begin{gathered}y^2\phi_3=0,\ \   xy\phi_3=0,\ \    yz\phi_3\w yz\phi_3=0,\ \    xz\phi_3\w xz\phi_3=0,\ \  xw\phi_3\w xw\phi_3=0,\  \ \text{or}\\yw\phi_3\w yw\phi_3,\ \ \text{or}\ \  
(\ref{rank2}).\end{gathered}$$
The other two summands are equal. Thus, 
\begingroup\allowdisplaybreaks\begin{align*}\Gamma_{\phi_3}(xyzw\t \omega_U)={}&2x^2\phi_3\w yw\phi_3\w yz\phi_3\w zw\phi_3 \\ 
{}={}&2x^*\w (ez^*+ fw^*)\w ew^*\w(bx^*+ey^*+kz^*+lw^*)\\
{}={}&2x^*\w ez^*\w ew^*\w ey^*\\
{}={}&2e^3\omega_{U^*}.\end{align*}\endgroup

\ms We prove  (\ref{third}).
The expansion of $\Gamma_{\phi_3}(xz^{(2)}w\t \omega_U)$ consists of twelve summands. Ten of the summands contain a factor of $$xy\phi_3=0,\quad zw\phi_3\w zw\phi_3=0,\quad xz\phi_3\w xz\phi_3=0,\quad  
xw\phi_3\w xw\phi_3=0, \quad\text{(\ref{rank2})},\quad\text{or}$$ 
\begin{equation}\label{wedge4rank3}\ts
\bw^4\kk(xz\phi_3,\ yw\phi_3,\  z^2\phi_3,\  xw\phi_3)\subseteq
\bw^4(P_0x^*\p P_0 z^*\p P_0w^*)=0.\end{equation} 
It follows that
\begingroup\allowdisplaybreaks\begin{align*}&\Gamma_{\phi_3}(xz^{(2)}w\t \omega_U)\\{}={}&x^2\phi_3\w yw\phi_3\w z^2\phi_3\w zw\phi_3+x^2\phi_3\w yz\phi_3\w z^2\phi_3\w w^2\phi_3\\
{}={}&\begin{cases}\phantom{+} x^*\w (ez^*+fw^*)\w(ax^*+jz^*+kw^*)\w (bx^*+ey^*+kz^*+lw^*)\\
+x^*\w ew^*\w (ax^*+jz^*+kw^*)\w (cx^*+fy^*+lz^*+mw^*)\end{cases}
\\{}={}&(e^2k-2efj)\omega_{U^*}.\end{align*}\endgroup

\ms We prove  (\ref{fourth}).
The expansion of $\Gamma_{\phi_3}(x^{(2)}w^{(2)}\t \omega_U)$ consists of six summands. Four of the summands have a factor of $xy\phi_3=0$ or $xw\phi_3\w xw\phi_3=0$. Thus,
 \begingroup\allowdisplaybreaks\begin{align*}&\Gamma_{\phi_3}(x^{(2)}w^{(2)}\t \omega_U)\\{}={}&
x^2\phi_3\w yw\phi_3\w xz\phi_3\w w^2\phi_3+
x^2\phi_3\w yw\phi_3\w zw\phi_3\w xw\phi_3\\
{}={}&\begin{cases}\phantom{+}
 x^*\w (ez^*+fw^*)\w(az^*+bw^*)\w (cx^*+fy^*+lz^*+mw^*)\\
+x^*\w (ez^*+fw^*)\w(bx^*+ey^*+kz^*+lw^*)\w(bz^*+cw^*)\end{cases}\\
{}={}&\Big(f(eb-fa)-e(ec-bf)\Big)\omega_{U^*}=(-ce^2+2bef-af^2) \omega_{U^*}.\end{align*}\endgroup

\ms We prove  (\ref{fifth}).
The expansion of $\Gamma_{\phi_3}(xyw^{(2)}\t \omega_U)$ consists of twelve summands. Eleven  of the summands have a factor of
$$y^2\phi_3=0,\quad yw\phi_3\w yw\phi_3=0, \quad xy\phi_3=0,\quad\text{or}\quad xw\phi_3\w xw\phi_3=0.$$
It follows that
\begingroup\allowdisplaybreaks\begin{align*}\Gamma_{\phi_3}(xyw^{(2)}\t \omega_U)={}&
x^2\phi_3\w yw\phi_3\w yz\phi_3\w w^2\phi_3\\
{}={}&x^*\w (ez^*+fw^*)\w ew^*\w(cx^*+fy^*+lz^*+mw^*)\\
{}={}&e^2f\omega_{U^*}\end{align*}\endgroup

\ms We prove  (\ref{sixth}).
The expansion of $\Gamma_{\phi_3}(xzw^{(2)}\t \omega_U)$ consists of twelve summands. Ten   of the summands have a factor of
$$xy\phi_3=0, \quad zw\phi_3\w zw\phi_3=0,\quad xz\phi_3\w xz\phi_3=0, \quad xw\phi_3\w xw\phi_3=0\quad \text{or}\quad(\ref{rank2}).$$
It follows that 
\begingroup\allowdisplaybreaks\begin{align*}&\Gamma_{\phi_3}(xzw^{(2)}\t \omega_U)\\
{}={}&x^2\phi_3\w yz\phi_3\w zw\phi_3\w w^2\phi_3+x^2\phi_3\w yw\phi_3\w z^2\phi_3\w w^2\phi_3\\
{}={}&\begin{cases}\phantom{+}
 x^*\w ew^*\w (bx^*+ey^*+kz^*+lw^*)\w (cx^*+fy^*+lz^*+mw^*)\\
+x^*\w(ez^*+fw^*)\w(ax^*+jz^*+kw^*)\w(cx^*+fy^*+lz^*+mw^*)\end{cases}\\
{}={}&\Big(e(el-fk)+f(ek-jf) \Big) \omega_{U^*}=(e^2\ell-jf^2)\omega){U^*}.
\end{align*}\endgroup

\ms We prove  (\ref{seventh}).
The expansion of $\Gamma_{\phi_3}(z^{(2)}w^{(2)}\t \omega_U)$ consists of six summands. Two   of the summands have a factor of
$zw\phi_3\w zw\phi_3=0$.
It follows that 
\begingroup\allowdisplaybreaks\begin{align*}&\Gamma_{\phi_3}(z^{(2)}w^{(2)}\t \omega_U)\\
{}={}&
\begin{cases}\phantom{+}
 xz\phi_3\w yz\phi_3\w zw\phi_3\w w^2\phi_3\\
+xz\phi_3\w yw\phi_3\w z^2\phi_3\w w^2\phi_3\\
+xw\phi_3\w yz\phi_3\w z^2\phi_3\w w^2\phi_3\\
+xw\phi_3\w yw\phi_3\w z^2\phi_3\w zw\phi_3\\
\end{cases}\\
{}={}&\begin{cases}\phantom{+}
 (az^*+bw^*)\w ew^*\w(bx^*+ey^*+kz^*+lw^*)\w(cx^*+fy^*+lz^*+mw^*)\\
+(az^*+bw^*)\w (ez^*+fw^*)\w(ax^*+jz^*+kw^*)\w(cx^*+fy^*+lz^*+mw^*)\\
+(bz^*+cw^*)\w ew^*\w(ax^*+jz^*+kw^*)\w(cx^*+fy^*+lz^*+mw^*)\\
+(bz^*+cw^*)\w(ez^*+fw^*)\w(ax^*+jz^*+kw^*)\w(bx^*+ey^*+kz^*+lw^*)
\end{cases}\\
{}={}&\Big(ae(bf-ec)+af(af-be)+afbe+ae(bf-ce) \Big) \omega_{U^*}\\
{}={}&(2abef-2ace^2+a^2f^2)\omega){U^*}.
\end{align*}\endgroup

\ms We prove  (\ref{eighth}).
The expansion of $\Gamma_{\phi_3}(xw^{(3)}\t \omega_U)$ consists of four summands. Three   of the summands have a factor of
$xw\phi_3\w xw\phi_3=0$, $xy\phi_3=0$, or (\ref{rank2}).
It follows that 
\begingroup\allowdisplaybreaks\begin{align*}&\Gamma_{\phi_3}(xw^{(3)}\t \omega_U)\\
{}={}& x^2\phi_3\w yw\phi_3\w zw\phi_3\w w^2\phi_3\\
{}={}&x^*\w (ez^*+fw^*)\w (bx^*+ey^*+kz^*+lw^*)\w (cx^*+fy^*+lz^*+mw^*)\\
{}={}&\left|\begin{matrix}0&e&f\\e&k&l\\f&l&m \end{matrix}\right| \omega_{U^*}\\
{}={}&(-f^2k+2efl-e^2m) \omega_{U^*}.
\end{align*}\endgroup

\ms We prove  (\ref{last}).
Observe  that 
\begingroup\allowdisplaybreaks\begin{align*}\Gamma_{\phi_3}(w^{(4)}\t \omega_U)
={}& xw\phi_3\w yw\phi_3\w zw\phi_3\w w^2\phi_3\\
{}={}&\begin{cases}(bz^*+cw^*)\w (ez^*+fw^*)\w (bx^*+ey^*+kz^*+lw^*)\\
\w (cx^*+fy^*+lz^*+mw^*)\end{cases}\\
{}={}&(bf-ce)z^*\w w^* \w(bf-ce)x^*\w y^*\\
{}={}&(bf-ce)^2\omega_{U^*}.
\end{align*}\endgroup
\vskip-24pt\end{proof}

\section{The Macaulay inverse system has a cubic term and $r=1$.}\label{cubic,r=1}

In this section 
$\phi_3=x^{*(3)}+\phi_{2,0}x^*+\phi_{3,0}$, with $\phi_{i,0}\in D_iU_0^*$, and the rank of $$p_{\phi_{2,0}}:U_0\to U_0^*$$
is equal to one, where  $U^*=\kk x^*\p U_0^*$ and $p_{\phi_{2,0}}(\ell_0)=\ell_0(\phi_{2,0})\in U_0^*$, for $\ell_0\in U_0$. We prove that if $\ell\phi_3$ is nonzero for all nonzero $\ell$ in $U$, then $\Gamma_{\phi_3}$ is not identically zero (and, therefore, $A_{\phi_3}$ has the weak Lefschetz property by 
Lemma~\ref{28.10}.) 

According to Lemma~\ref{55.Claim 3}, the hypothesis about the rank of $p_{\phi_{2,0}}$ ensures  that there is a basis $y^*,z^*,w^*$ for $U^*$ such that $\phi_{2,0}=az^{*(2)}$ for some unit $a$ in $\kk$. In particular, the basis elements $x^{*(2)}y^*$, $x^{*(2)}z^*$, $x^{*(2)}w^*$, $x^*y^{*(2)}$, $x^*y^*z^*$, $x^*y^*w^*$, $x^*z^{*(2)}$, $x^*z^*w^*$, and $x^*w^{*(2)}$
of $D_3U^*$ appear in $\phi_3$ with coefficient zero. Thus, $\phi_3$ has the form of (\ref{59.1}).

\begin{proposition}
\label{5.2}
Let $P_0$ be a domain, $U$ be a free $P_0$-module of rank four with dual module $U^*=\Hom_{P_0}(U,P_0)$, $x,y,z,w$ be a basis for $U$ with dual basis $x^*,y^*,z^*,w^*$ for $U^*$. Let $\phi_3\in D_3U^*$ have the form

\begin{equation}
\label{59.1} \phi_3=\begin{cases}
\phantom{+}x^{*(3)}+ax^*z^{*(2)}+dy^*z^{*(2)}+ey^*z^*w^*\\+fy^*w^{*(2)}
+gy^{*(2)}z^*+hy^{*(2)}w^*+iy^{*(3)}\\+jz^{*(3)}+kz^{*(2)}w^*+lz^*w^{*(2)}+mw^{*(3)},\end{cases}\end{equation}
for parameters $a,\dots,m$ in $P_0$.
Assume that 
\begin{enumerate}[\rm(a)]\item$a\neq 0$ in $P_0$ and
\item$\ell\phi_3\neq 0$ for all nonzero $\ell$ in $U$.
\end{enumerate}
 Then $\Gamma_{\phi_3}$ is not identically zero.
\end{proposition}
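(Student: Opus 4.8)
The plan is to argue by contradiction, in close parallel with the proof of Proposition~\ref{58.2}, exploiting the fact that in form~(\ref{59.1}) the variable $x$ is almost a zerodivisor on $A_{\phi_3}$. Suppose $\Gamma_{\phi_3}$ is identically zero. By Observation~\ref{28.9} this gives $\Gamma_{\phi_3}(X\t\omega_U)=0$ for every monomial $X\in D_4U$, where $\omega_U=x\w y\w z\w w$; I will use these vanishings to produce a nonzero $\ell\in U$ with $\ell\phi_3=0$, contradicting hypothesis~(b).

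First I would record the partial products coming from~(\ref{59.1}). One has $x^{(2)}\phi_3=x^*$, $xz\phi_3=az^*$, and, crucially, $xy\phi_3=xw\phi_3=0$; the remaining six images $y^{(2)}\phi_3$, $yz\phi_3$, $yw\phi_3$, $z^{(2)}\phi_3$, $zw\phi_3$, $w^{(2)}\phi_3$ all lie in $\kk y^*\p\kk z^*\p\kk w^*$ and have explicit coefficients built from $d,\dots,m$. It follows that for a general element $\ell=\a x+\beta y+\g z+\delta w$ of $U$, the equation $\ell\phi_3=0$ forces $\a=0$ (the coefficient of $x^{*(2)}$) and then $\g=0$ (the coefficient of $x^*z^*$, using $a\neq0$ and that $P_0$ is a domain), and the remaining conditions on $(\beta,\delta)$ say exactly $\beta v_1+\delta v_2=0$, where $v_1=(i,g,h,d,e,f)$ and $v_2=(h,e,f,k,l,m)$. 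Thus hypothesis~(b) is \emph{equivalent} to the assertion that $v_1$ and $v_2$ are $P_0$-linearly independent, i.e.\ that not all fifteen $2\times2$ minors of the $6\times2$ matrix with columns $v_1,v_2$ vanish. So it suffices to show that $\Gamma_{\phi_3}\equiv0$ forces every one of these minors to be zero.

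Next I would prove a computational lemma in the style of Lemma~\ref{calculation}, evaluating $\Gamma_{\phi_3}(X\t\omega_U)$ on a carefully chosen finite list of monomials $X$ by means of the expansions in Example~\ref{28.8}. The identities $xy\phi_3=xw\phi_3=0$ kill most summands, and whenever the two ``$x$-type'' factors $x^{(2)}\phi_3=x^*$ and $xz\phi_3=az^*$ both occur the remaining two wedge factors lie in $\kk y^*\p\kk z^*\p\kk w^*$, so each surviving term collapses to an elementary $2\times2$ determinant. Already the six monomials $x^{(2)}y^{(2)}$, $x^{(2)}z^{(2)}$, $x^{(2)}w^{(2)}$, $x^{(2)}yw$, $x^{(2)}yz$, $x^{(2)}zw$ yield, after cancelling the nonzero factor $a$, that the minors $if-h^2$, $gl-e^2$, $hm-f^2$, $im-hf$ vanish and that $(il-he)+(gf-eh)=0$ and $(hl-fe)+(gm-ef)=0$. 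The remaining minors I would extract from monomials of the shapes $x\cdot(\text{cubic in }y,z,w)$, such as $xy^{(3)}$, $xy^{(2)}z$, $xy^{(2)}w$, $xyz^{(2)}$, $xyzw$, $xzw^{(2)}$, and, if still needed, quartics in $y,z,w$ such as $z^{(4)}$; each contributes a further polynomial relation among $d,\dots,m$. Combining these relations and using repeatedly that $P_0$ is a domain and $a\neq0$, one concludes that all fifteen $2\times2$ minors of the matrix vanish, so $v_1$ and $v_2$ are $P_0$-linearly dependent; a nonzero relation $\beta v_1+\delta v_2=0$ (obtained by clearing denominators of a fraction-field relation) then makes $\ell:=\beta y+\delta w$ a nonzero element of $U$ with $\ell\phi_3=0$, contradicting hypothesis~(b). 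Hence $\Gamma_{\phi_3}$ is not identically zero.

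I expect the main obstacle to be organizational rather than conceptual: there are many $4\times4$ determinants to compute, and the monomials and the order of deductions must be arranged so that the resulting polynomial identities really do force the rank of the matrix $(v_1\mid v_2)$ down over an arbitrary domain assuming only $a\neq0$. In particular one must split the ``combined'' relations such as $(il-he)+(gf-eh)=0$ by playing them against enough independent identities, and --- unlike the $r=2$ case, where one first deduces $e\neq0$ --- one should check that no auxiliary nonvanishing condition is quietly needed as a separate branch. A secondary, purely clerical nuisance is keeping track of the signs produced by reordering wedge factors and by the relabelings of $\ell_1,\dots,\ell_4$ used when applying the formulas of Example~\ref{28.8}.
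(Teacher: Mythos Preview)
Your approach is essentially the same as the paper's: assume $\Gamma_{\phi_3}\equiv 0$, exploit $xy\phi_3=xw\phi_3=0$ to simplify the wedge expansions, and deduce that $y\phi_3$ and $w\phi_3$ are linearly dependent by killing all $2\times 2$ minors of the associated $2\times 6$ matrix. Your six $x^{(2)}\cdot$ evaluations are exactly the paper's $F_0$--$F_5$ (up to sign).

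The one place where the paper is sharper than your sketch is in the ``splitting'' step you flag as the main obstacle. Rather than hunting for enough extra $\Gamma_{\phi_3}$-evaluations to separate, say, $il-he$ from $gf-eh$, the paper adds only three more evaluations, namely $xyz^{(2)}$, $xz^{(3)}$, $xz^{(2)}w$, giving cubic polynomials $F_6,F_7,F_8$ in the parameters, and then writes the \emph{square} of each remaining minor $G_i$ explicitly as a $P_0$-linear combination of $F_0,\dots,F_8$ (for instance $G_0^2=-glF_0+fgF_1-h^2F_2$ and $G_3^2=-d^2F_2-g(F_7-jF_2)$). This ``square trick'' is what makes the argument go through over an arbitrary domain with no auxiliary case split; your plan of accumulating further evaluations from $xy^{(3)}$, $xy^{(2)}z$, etc.\ would likely work too, but the bookkeeping is heavier and you would still need some syzygy-type identities to finish.
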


\begin{proof} 
The proof is by contradiction. We assume that $\Gamma_{\phi_3}$ is identically zero; we prove that $y\phi_3$ and $w\phi_3$ are linearly dependent. (Of course, this contradicts the hypothesis that $\ell\phi_3$ is nonzero whenever $\ell$ is a nonzero element of $U$).
We show that 
\begin{align*}
y\phi_3={}&gy^*z^*+hy^*w^*+iy^{*(2)}+dz^{*(2)}+ez^*w^*+fw^{*(2)}\quad\text{and}\\
w\phi_3={}&ey^*z^*+fy^*w^*+hy^{*(2)}+kz^{*(2)}+lz^*w^*+mw^{*(2)}\end{align*}
are linearly dependent by showing that the $2\times 2$ minors of \begin{equation}\label{matrix}\bmatrix g&h&i&d&e&f\\e&f&h&k&\ell&m\endbmatrix\end{equation} are all zero. 
The $2\times 2$ minors of (\ref{matrix}) are
\begingroup\allowdisplaybreaks
\begin{align*}
G_0={}& fg - eh,&
G_1={}& gh - ei,&
G_2={}& h^2  - fi,\\
G_3={}& gk- de, &
G_4={}& hk- df,&
G_5={}& ik- dh, \\
G_6={}& gl- e^2,&
G_7={}& hl- ef,&
G_8={}& il- eh,\\
G_9={}& dl- ek,&
G_{10}={}& gm- ef,&      
G_{11}={}& hm- f^2,\\
G_{12}={}& im- fh,&
G_{13}={}& dm- fk,\text{ and}&
G_{14}={}& em- fl.\\
\end{align*}\endgroup
Define $F_0,\dots,F_8$ to be the following elements of $P_0$:

\begingroup\allowdisplaybreaks
\begin{align*}
F_0={}&fi-h^2&
F_1={}&fg-2eh+il\\
F_2={}&gl-e^2,&
F_3={}&im-fh,\\
F_4={}&gm-2ef+hl,&
F_5={}&hm-f^2,\\
F_6={}&de^2-d^2f+fgj-2ehj+2dhk-ik^2-dgl+ijl,\\
F_7={}&gjl-e^2j+2dek-gk^2-d^2l,\text{ and}\\ 
F_8={}&gjm-2efj+e^2k+2dfk-hk^2+hjl-gkl-d^2m.
\end{align*}\endgroup
Let $\omega_U$ and $\omega_{U^*}$ represent the bases
$x\w y\w z\w w$ and  $x^*\w y^*\w z^*\w w^*$ of $\bw^4U$ and $\bw^4U^*$, respectively. In Lemma~\ref{l5.2} we show that
\begingroup\allowdisplaybreaks\begin{align}
\label{M26.1}\Gamma_{\phi_3}(x^{(2)}y^{(2)}\t \omega_U)={}&aF_0\omega_{U^*},\\
\label{M26.2}\Gamma_{\phi_3}(x^{(2)}yz\t \omega_U)={}&aF_1\omega_{U^*},\\\label{M26.3}\Gamma_{\phi_3}(x^{(2)}z^{(2)}\t \omega_U)={}&aF_2\omega_{U^*},\\
\label{M26.6}\Gamma_{\phi_3}(x^{(2)}yw)\t \omega_U)={}&aF_3\omega_{U^*},\\\label{M26.7}\Gamma_{\phi_3}(x^{(2)}zw\t \omega_U)={}&aF_4\omega_{U^*},\\
\label{M26.8}\Gamma_{\phi_3}(x^{(2)}w^{(2)}\t \omega_U)={}&aF_5\omega_{U^*},\\
\label{M26.4}\Gamma_{\phi_3}(xyz^{(2)}\t \omega_U)={}&F_6\omega_{U^*},\\
\label{M26.5}\Gamma_{\phi_3}(xz^{(3)}\t \omega_U)={}&F_7\omega_{U^*}, \text{ and}\\
\label{M26.9}\Gamma_{\phi_3}(xz^{(2)}w\t \omega_U)={}&F_8\omega_{U^*}.
\end{align}\endgroup
The element $a$ of the domain $P_0$ is nonzero. We have assumed that 
$\Gamma_{\phi_3}$ is identically zero. We conclude that 
$$F_0=F_1=F_2=F_3=F_4=F_5=F_6=F_7=F_8=0.$$ 
Straightforward calculation show that
\begingroup\allowdisplaybreaks
\begin{align*}
G_0^2={}&-glF_0+fgF_1-h^2F_2,&
G_1^2={}&-g^2F_0+giF_1-i^2F_2,\\
G_2={}&-F_0,&
G_3^2={}&-d^2F_2-g(F_7-jF_2),\\
G_4^2={}&-k^2F_0+f(-dF_2+jF_1-F_6),&
G_5^2={}&-d^2F_0+i(-dF_2+jF_1-F_6),\\ 
G_6={}&F_2,&
G_7^2={}&-f^2F_2+hlF_4-glF_5,\\
G_8^2={}&-glF_0+ilF_1-h^2F_2,&
G_9^2={}&-k^2F_2-l(F_7-jF_2),\\
G_{10}^2={}&-f^2F_2+gmF_4-glF_5,&
G_{11}={}&F_5,\\
G_{12}={}&F_3,&
G_{13}^2={}&-k^2F_5-m(F_8-jF_4+kF_2),\text{ and}\\
G_{14}^2={}&-m^2F_2+lm(F_4)-l^2F_5.\\
\end{align*}
\endgroup
It follows that $G_i=0$ for $0\le i\le 14$; all $2\times 2$ minors of 
(\ref{matrix}) are zero; the elements $y\phi_3$ and $w\phi_3$ of $U^*$ are linearly independent, and there exists a nonzero element $\ell$ of $\kk y\p\kk w\subseteq U$ with $\ell \phi_3=0$. This contradicts the ambient hypothesis. The proof is complete. \end{proof}
\begin{lemma}
\label{l5.2}
In the notation of Proposition~{\rm\ref{5.2}}, 
 the formulas
{\rm(\ref{M26.1})} to {\rm(\ref{M26.9})} all hold.
\end{lemma}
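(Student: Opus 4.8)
The plan is to prove Lemma~\ref{l5.2} by direct computation, exactly parallel to the proof of Lemma~\ref{calculation} in the previous section. Each formula (\ref{M26.1})--(\ref{M26.9}) asserts the value of $\Gamma_{\phi_3}(X\t \omega_U)$ for a specific weight-four monomial $X$ in the $\ell_i^{(e_i)}$, so I will expand each one using the explicit formulas recorded in Example~\ref{28.8} for $\dim U=4$. The key first step is to record the relevant ``partial derivatives'' of $\phi_3$: from (\ref{59.1}), the elements $x^2\phi_3$, $xy\phi_3$, $xz\phi_3$, $xw\phi_3$, $y^2\phi_3$, $yz\phi_3$, $yw\phi_3$, $z^2\phi_3$, $zw\phi_3$, $w^2\phi_3$ as elements of $U^*$. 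Crucially, because the coefficients of $x^{*(2)}y^*$, $x^{*(2)}z^*$, $x^{*(2)}w^*$, $x^*y^{*(2)}$, $x^*y^*z^*$, $x^*y^*w^*$, $x^*z^{*(2)}$, $x^*z^*w^*$, $x^*w^{*(2)}$ all vanish, we get $xy\phi_3 = xz\phi_3 = xw\phi_3 = 0$ and $x^2\phi_3 = x^*$. This makes most of the summands in each expansion vanish, since any term containing a wedge factor of $xy\phi_3$, $xz\phi_3$, or $xw\phi_3$ is zero.

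The second step is the systematic bookkeeping: for each of the nine monomials $X$, write down the full expansion from Example~\ref{28.8}, strike the summands containing a zero factor (a repeated wedge factor such as $z^2\phi_3\w z^2\phi_3$, or one of the vanishing $xy\phi_3,xz\phi_3,xw\phi_3$, or a wedge of too many vectors lying in a proper subspace such as $P_0x^*\p P_0z^*\p P_0w^*$), and then substitute
\begin{align*}
y^2\phi_3={}&gy^*+dz^*+ew^*, & yz\phi_3={}&gy^*+jz^*+kw^*,\\
yw\phi_3={}&hy^*+kz^*+lw^*, & z^2\phi_3={}&ax^*+jz^*+kw^*,\\
zw\phi_3={}&kz^*+lw^*, & w^2\phi_3={}&fy^*+lz^*+mw^*,
\end{align*}
(and $x^2\phi_3 = x^*$) into the surviving terms, expanding each resulting $4\times 4$ wedge as a determinant in the coordinate basis $x^*,y^*,z^*,w^*$. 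For the monomials $x^{(2)}y^{(2)}$, $x^{(2)}yz$, $x^{(2)}z^{(2)}$, $x^{(2)}yw$, $x^{(2)}zw$, $x^{(2)}w^{(2)}$, the presence of the two factors $x^{(2)}$ forces $x^2\phi_3 = x^*$ to occur and kills all but one or two summands; the common factor $a$ in (\ref{M26.1})--(\ref{M26.8}) comes from $z^2\phi_3$ contributing its $ax^*$ component (since $x^*\w x^*=0$, only the $ax^*$ piece of $z^2\phi_3$ can survive alongside another $x^*$, or conversely it is the only way a nonzero determinant arises). For the last three monomials $xyz^{(2)}$, $xz^{(3)}$, $xz^{(2)}w$ there is only one factor of $x^{(2)}$ (hence one $x^*$), more summands survive, and one gets the cubic expressions $F_6$, $F_7$, $F_8$; here the $a$ appears inside those polynomials rather than as an overall factor.

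The main obstacle is not conceptual but purely the volume and error-proneness of the computation: there are nine separate $4\times 4$ determinant expansions, several with a dozen or more summands before cancellation, and the polynomials $F_6,F_7,F_8$ each have eight or nine terms, so sign errors and dropped monomials are the real danger. To keep this manageable I would organize the write-up monomial by monomial in the same format as Lemma~\ref{calculation}: state which summands survive and why (naming the vanishing factor), display the surviving wedge products with the substitutions made, and then evaluate the determinant. I would double-check each $F_i$ by an independent expansion (for instance, expanding along the column corresponding to whichever $\ell_i^2\phi_3$ is ``simplest''), and cross-check consistency with the $G_i^2$ identities used in the proof of Proposition~\ref{5.2}, since those provide a strong algebraic sanity check on the nine formulas taken together. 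All of this can be, and in the paper surely was, verified in Macaulay2, so in practice I would present the hand computation for one or two representative cases in full and indicate that the rest are entirely analogous.
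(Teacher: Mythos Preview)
Your overall approach is exactly the one the paper uses: expand each $\Gamma_{\phi_3}(X\t\omega_U)$ via Example~\ref{28.8}, discard summands containing a vanishing factor, and evaluate the surviving wedges as determinants. However, your table of ``partial derivatives'' contains a fatal error. From (\ref{59.1}) one has $xz\phi_3 = az^*$, \emph{not} zero: the term $ax^*z^{*(2)}$ is present (indeed $a\neq 0$ is a hypothesis), so your claim that ``the coefficients of \dots $x^*z^{*(2)}$ \dots all vanish'' is wrong. (The paper's introductory sentence before (\ref{59.1}) contains a typo listing $x^*z^{*(2)}$ among the zero coefficients, but (\ref{59.1}) itself is unambiguous.) With $xz\phi_3=0$ your computation of, say, $\Gamma_{\phi_3}(x^{(2)}y^{(2)}\t\omega_U)$ would give $0$ rather than $aF_0$: the unique surviving summand in the paper is $x^2\phi_3\w y^2\phi_3\w xz\phi_3\w yw\phi_3 = x^*\w(gz^*+hw^*+iy^*)\w az^*\w(ez^*+fw^*+hy^*)$, and the factor $a$ you were trying to locate comes precisely from $xz\phi_3=az^*$, not from $z^2\phi_3$.

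Several of your other derivatives are also miscomputed. The correct values from (\ref{59.1}) are
\[
y^2\phi_3=iy^*+gz^*+hw^*,\quad yz\phi_3=gy^*+dz^*+ew^*,\quad yw\phi_3=hy^*+ez^*+fw^*,
\]
\[
z^2\phi_3=ax^*+dy^*+jz^*+kw^*,\quad zw\phi_3=ey^*+kz^*+lw^*,\quad w^2\phi_3=fy^*+lz^*+mw^*.
\]
In particular $z^2\phi_3$ has a $dy^*$ term and $zw\phi_3$ has an $ey^*$ term that you dropped. Once you use these corrected values, the argument proceeds exactly as in the paper: only $xy\phi_3$ and $xw\phi_3$ vanish, so for the six monomials beginning with $x^{(2)}$ the single surviving summand always contains $x^*\w(\cdots)\w az^*\w(\cdots)$, producing the overall factor $a$; for $xyz^{(2)}$, $xz^{(3)}$, $xz^{(2)}w$ one or two summands survive and the $a$ sits inside the $z^2\phi_3$ factor, giving the $3\times 3$ determinants that expand to $F_6,F_7,F_8$.
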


\begin{proof} 
We prove (\ref{M26.1}).
The expansion of $\Gamma_{\phi_3}(x^{(2)}y^{(2)}\t \omega_U)$ has six summands. Five of the summands have a factor of $xy\phi_3=0$ or $xw\phi_3=0$. Thus,
\begingroup\allowdisplaybreaks\begin{align*}\Gamma_{\phi_3}(x^{(2)}y^{(2)}\t \omega_U)={}&x^2\phi_3\w y^2\phi_3\w xz\phi_3\w yw\phi_3\\
{}={}&x^*\w (gz^*+hw^*+iy^*)\w az^*\w (ez^*+fw^*+hy^*)\\
{}={}&a(if-h^2)\omega_{U^*}=aF_0\omega_{U^*}. \end{align*}\endgroup

We prove  
(\ref{M26.2}). 
The expansion of $\Gamma_{\phi_3}(x^{(2)}yz\t \omega_U)$ has twelve summands. Ten of the   summands have a factor of $xy\phi_3=0$ or $xw\phi_3=0$. Thus,
\begingroup\allowdisplaybreaks\begin{align*}\Gamma_{\phi_3}(x^{(2)}yz\t \omega_U)={}&
x^2\phi_3\w y^2\phi_3\w xz\phi_3\w zw\phi_3+x^2\phi_3\w yz\phi_3\w xz\phi_3\w yw\phi_3\\
{}={}&\begin{cases}\phantom{+}
 x^*\w (gz^*+hw^*+iy^*)\w az^*\w (ey^*+kz^*+lw^*)\\
+x^*\w (dz^*+ew^*+gy^*)\w az^*\w (ez^*+fw^*+hy^*)
\end{cases}\\
{}={}&\Big(a(il-eh)+a(gf-eh)\Big)\omega_{U^*}\\
{}={}&a(fg-2eh+il)\omega_{U^*}=aF_1\omega_{U^*}. \end{align*}\endgroup

We prove
(\ref{M26.3}).
The expansion of $\Gamma_{\phi_3}(x^{(2)}z^{(2)}\t \omega_U)$ has six summands. Five  of the   summands have a factor of $xy\phi_3=0$ or $xw\phi_3=0$. Thus,
\begingroup\allowdisplaybreaks\begin{align*}\Gamma_{\phi_3}(x^{(2)}z^{(2)}\t \omega_U)={}&
x^2\phi_3\w yz\phi_3\w xz\phi_3\w zw\phi_3\\
{}={}&
 x^*\w (dz^*+ew^*+gy^*)\w az^*\w (ey^*+kz^*+lw^*)\\
{}={}&a(gl-e^2)\omega_{U^*}=aF_2\omega_{U^*}. \end{align*}\endgroup

We prove  
(\ref{M26.6}).
The expansion of $\Gamma_{\phi_3}(x^{(2)}yw\t \omega_U)$ has twelve summands. Eleven   of the   summands have a factor of $xy\phi_3=0$,  $xw\phi_3=0$, or $yw\phi_3\w yw\phi_3=0$. Thus,
\begingroup\allowdisplaybreaks\begin{align*}\Gamma_{\phi_3}(x^{(2)}yw\t \omega_U){}={}&
x^2\phi_3\w y^2\phi_3\w xz\phi_3\w w^2\phi_3\\
{}={}&
x^*\w (gz^*+hw^*+iy^*)\w az^*\w(fy^*+lz^*+mw^*)
\\
{}={}&a(im-fh)\omega_{U^*} 
= 
aF_3\omega_{U^*}. \end{align*}\endgroup

We prove 
(\ref{M26.7}).
The expansion of $\Gamma_{\phi_3}(x^{(2)}zw\t \omega_U)$ has twelve summands. Ten    of the   summands have a factor of $xy\phi_3=0$ or  $xw\phi_3=0$. Thus,
\begingroup\allowdisplaybreaks\begin{align*}\Gamma_{\phi_3}(x^{(2)}zw\t \omega_U)={}&
x^2\phi_3\w yz\phi_3\w xz\phi_3\w w^2\phi_3+x^2\phi_3\w yw\phi_3\w xz\phi_3\w zw\phi_3\\
{}={}&\begin{cases}\phantom{+}
x^*\w (dz^*+ew^*+gy^*)\w az^*\w(fy^*+lz^*+mw^*)\\
+x^*\w (ez^*+fw^*+hy^*)\w az^*\w(ey^*+kz^*+lw^*)\end{cases}\\
{}={}&\Big(a(gm-ef)+a(hl-ef)\Big)\omega_{U^*}\\
{}={}&aF_4\omega_{U^*}. \end{align*}\endgroup

We prove (\ref{M26.8}).
The expansion of $\Gamma_{\phi_3}(x^{(2)}w^{(2)}\t \omega_U)$ has six summands. Five     of the   summands have a factor of $xy\phi_3=0$ or  $xw\phi_3=0$. Thus,
\begingroup\allowdisplaybreaks\begin{align*}\Gamma_{\phi_3}(x^{(2)}w^{(2)}\t \omega_U)={}&
x^2\phi_3\w yw\phi_3\w xz\phi_3\w w^2\phi_3\\
{}={}&
x^*\w (ez^*+fw^*+hy^*)\w az^*\w(fy^*+lz^*+mw^*)\\
{}={}&a(hm-f^2)\omega_{U^*}\\
{}={}&aF_5\omega_{U^*}. \end{align*}\endgroup

We prove  
(\ref{M26.4}).
The expansion of $\Gamma_{\phi_3}(xyz^{(2)}\t \omega_U)$ has twelve summands. Ten  of the   summands have a factor of $xy\phi_3=0$,  $xw\phi_3=0$, $xz\phi_3\w xz\phi_3=0$, or $yz\phi_3\w yz\phi_3$. Thus,
\begingroup\allowdisplaybreaks\begin{align*}&\Gamma_{\phi_3}(xyz^{(2)}\t \omega_U)\\{}={}&
x^2\phi_3\w y^2\phi_3\w z^2\phi_3\w zw\phi_3+x^2\phi_3\w yz\phi_3\w z^2\phi_3\w yw\phi_3\\
{}={}&\begin{cases}\phantom{+}
x^*\w (gz^*+hw^*+iy^*)\w (ax^*+dy^*+jz^*+kw^*)\w (ey^*+kz^*+lw^*)
\\
 +x^*\w (dz^*+ew^*+gy^*)\w (ax^*+dy^*+jz^*+kw^*)\w (ez^*+fw^*+hy^*)\end{cases}\\
{}={}&\left(\det\bmatrix i&g&h\\d&j&k\\e&k&l\endbmatrix+\det \bmatrix g&d&e\\d&j&k\\h&e&f \endbmatrix\right)\omega_{U^*}\\
{}={}&(de^2-d^2f+fgj-2ehj+2dhk-ik^2-dgl+ijl)\omega_{U^*}\\
{}={}&F_6\omega_{U^*}. \end{align*}\endgroup

We prove  
(\ref{M26.5}).
The expansion of $\Gamma_{\phi_3}(xz^{(3)}\t \omega_U)$ has four summands. Three  of the   summands have a factor of $xy\phi_3=0$,  $xw\phi_3=0$, or $xz\phi_3\w xz\phi_3=0$. Thus,
\begingroup\allowdisplaybreaks\begin{align*}&\Gamma_{\phi_3}(xz^{(3)}\t \omega_U)\\{}={}&
x^2\phi_3\w yz\phi_3\w z^2\phi_3\w zw\phi_3\\
{}={}&
x^*\w (dz^*+ew^*+gy^*)\w (ax^*+dy^*+jz^*+kw^*)\w (ey^*+kz^*+lw^*)
\\
{}={}&\det\bmatrix g&d&e\\d&j&k\\e&k&l\endbmatrix\omega_{U^*}\\
{}={}&(-e^2j+2dek-gk^2-d^2l+gjl)\omega_{U^*}\\
{}={}&F_7\omega_{U^*}. \end{align*}\endgroup

We prove (\ref{M26.9}).
The expansion of $\Gamma_{\phi_3}(xz^{(2)}w\t \omega_U)$ has twelve summands. Ten   of the   summands have a factor of 
$$xy\phi_3=0,\quad  xw\phi_3=0,\quad zw\phi_3\w zw\phi_3=0,\quad\text{or}\quad xz\phi_3\w xz\phi_3=0.$$ Thus,
\begingroup\allowdisplaybreaks\begin{align*}&\Gamma_{\phi_3}(xz^{(2)}w\t \omega_U)\\
{}={}&
x^2\phi_3\w yw\phi_3\w z^2\phi_3\w zw\phi_3
+x^2\phi_3\w yz\phi_3\w z^2\phi_3\w w^2\phi_3
\\
{}={}&\begin{cases}\phantom{+}
 x^*\w (ez^*+fw^*+hy^*)\w (ax^*+dy^*+jz^*+kw^*)\w(ey^*+kz^*+lw^*)\\
+x^*\w (dz^*+ew^*+gy^*)\w (ax^*+dy^*+jz^*+kw^*)\w(fy^*+lz^*+mw^*)\end{cases}\\
{}={}&\left(
\det \bmatrix h&e&f\\d&j&k\\e&k&l\endbmatrix
+\det \bmatrix g&d&e\\d&j&k\\f&l&m\endbmatrix
\right)\omega_{U^*}\\
{}={}&
(-2efj+e^2k+2dfk-hk^2+hjl-gkl-d^2m+gjm)\omega_{U^*}\\
{}={}&F_8\omega_{U^*}. \end{align*}\endgroup
\vskip-24pt
\end{proof}

\begin{lemma}
\label{case3a} Lemma~{\rm\ref{ML}} holds when $\phi_3$ has the form of Lemma~{\rm\ref{61.1}.(\ref{61.1.a})} and $r=0$ or $r=3$.
\end{lemma}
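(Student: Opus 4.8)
The plan is to split into the two sub-cases $r=3$ and $r=0$ and, in each, collapse the four-variable problem onto the three-dimensional subspace $U_0$ complementary to $\kk x$. Write $U=\kk x\p U_0$, $U^*=\kk x^*\p U_0^*$, so that $\phi_3=x^{*(3)}+x^*\phi_{2,0}+\phi_{3,0}$ with $\phi_{i,0}\in D_iU_0^*$, and put $\omega_U=x\w y\w z\w w$, $\omega_{U_0}=y\w z\w w$. First I would record the contractions $x^2\phi_3=x^*$ and, for $\ell_0\in U_0$, $x\ell_0\phi_3=\ell_0\phi_{2,0}$ (immediate on expanding $\phi_3$ into monomials, since $x$ divides no monomial of $\phi_{3,0}$ and $x^2$ divides no monomial of $x^*\phi_{2,0}$). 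The case $r=3$ is then quick: by the first formula of Example~\ref{28.8} with $(\ell_1,\ell_2,\ell_3,\ell_4)=(x,y,z,w)$, for which the exterior factor is exactly $\omega_U$,
$$\Gamma_{\phi_3}(x^{(4)}\t\omega_U)=x^2\phi_3\w xy\phi_3\w xz\phi_3\w xw\phi_3=x^*\w y\phi_{2,0}\w z\phi_{2,0}\w w\phi_{2,0};$$
since $\rank p_{\phi_{2,0}}=3$ and the codomain $U_0^*$ of $p_{\phi_{2,0}}$ is three-dimensional, $y\phi_{2,0},z\phi_{2,0},w\phi_{2,0}$ form a basis of $U_0^*$, so this element is a nonzero multiple of $x^*\w y^*\w z^*\w w^*$ and $\Gamma_{\phi_3}$ is not identically zero. (When $r=3$ this uses neither hypothesis of Lemma~\ref{ML}.)

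For $r=0$ we have $\phi_{2,0}=0$, hence $\phi_3=x^{*(3)}+\phi_{3,0}$ with $\phi_{3,0}\in D_3U_0^*$, and the contractions above become $x^2\phi_3=x^*$, $x\ell_0\phi_3=0$, and $v\phi_3=v\phi_{3,0}$ for $\ell_0\in U_0$, $v\in\Sym_2U_0$. The core of this case is the reduction identity
\begin{equation}\label{plan-red}\Gamma_{\phi_3}(xX\t\omega_U)=x^*\w\Gamma_{\phi_{3,0}}(X\t\omega_{U_0}),\qquad X\in D_3U_0,\end{equation}
where $\Gamma_{\phi_{3,0}}\colon D_3U_0\t\bw^3U_0\to\bw^3U_0^*$ is the operator of Definition~\ref{28.7} attached to the space $U_0$ and the element $\phi_{3,0}$. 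I would prove (\ref{plan-red}) by expanding $\Gamma_{\phi_3}(xX\t\omega_U)$ directly from the map $\bowtie$: since $xX$ has $x$-degree one, each term of $\Delta(xX)\in T_4U$ arises by inserting one factor $x$ into a slot of a term of $\Delta(X)\in T_3U_0$. Inserting $x$ into the second, third, or fourth slot yields a summand with a factor $xy\phi_3$, $xz\phi_3$, or $xw\phi_3$, all zero; inserting $x$ into the first slot yields $x^2\phi_3=x^*$ wedged with the matching summand of $\Gamma_{\phi_{3,0}}(X\t\omega_{U_0})$. Summation over $\Delta(X)$ gives (\ref{plan-red}), and since $\eta\mapsto x^*\w\eta$ is injective on the line $\bw^3U_0^*$, it follows that $\Gamma_{\phi_3}$ is not identically zero as soon as $\Gamma_{\phi_{3,0}}$ is not.

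It remains to translate the hypotheses of Lemma~\ref{ML}. Since $x^{*(2)}$ survives in $\ell\phi_3$ for every $\ell$ with nonzero $x$-coordinate, hypothesis (\ref{ML.b}) is here equivalent to ``$\ell_0\phi_{3,0}\neq0$ for every nonzero $\ell_0\in U_0$'', the embedding-dimension-three hypothesis for $\phi_{3,0}$. A basis change of $U_0^*$ carrying $\phi_{3,0}$ to $y^*z^*w^*$ would, extended by $x^*\mapsto x^*$, carry $\phi_3$ to $x^{*(3)}+y^*z^*w^*$; so hypothesis (\ref{ML.a}) lets us assume $\phi_{3,0}$ is not equivalent to $y^*z^*w^*$, i.e. that $A_{\phi_{3,0}}$ is not the exceptional ring of the three-variable theorem. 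The three-variable version of Theorem~\ref{main} (Lemma~\ref{lemma3}) then gives that $\Gamma_{\phi_{3,0}}$ is not identically zero, and Observation~\ref{28.9} (applied to $\phi_{3,0}$) supplies an $\ell_0\in U_0$ with $\Gamma_{\phi_{3,0}}(\ell_0^{(3)}\t\omega_{U_0})\neq0$; applying (\ref{plan-red}) with $X=\ell_0^{(3)}$ completes the proof.

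The step I expect to be the main obstacle is making (\ref{plan-red}) airtight: one must unwind the definition of $\bowtie$ carefully enough to see that precisely the summands with $x$ in the first tensor slot survive and that they reassemble, with the correct sign, into $x^*\w\Gamma_{\phi_{3,0}}(X\t\omega_{U_0})$. The accompanying bookkeeping --- that the unique forbidden $d=4$ ring in characteristic two is exactly the one produced by $\phi_{3,0}$ equivalent to $y^*z^*w^*$, so that the $d=4$ and $d=3$ exceptions coincide --- is routine.
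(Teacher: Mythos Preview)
Your proof is correct and follows essentially the same approach as the paper's: for $r=3$ you evaluate $\Gamma_{\phi_3}(x^{(4)}\t\omega_U)$ exactly as the paper does, and for $r=0$ you establish the same reduction identity $\Gamma_{\phi_3}(xX\t\omega_U)=x^*\w\Gamma_{\phi_{3,0}}(X\t\omega_{U_0})$ and invoke Lemma~\ref{lemma3}. The only differences are cosmetic: the paper states the reduction identity in one line without the detailed slot-by-slot justification you give, and the paper simply takes any $\theta\in D_3U_0$ with $\Gamma_{\phi_{3,0}}(\theta\t\omega_{U_0})\neq0$ rather than passing through Observation~\ref{28.9} to obtain one of the special form $\ell_0^{(3)}$ (your detour is harmless but unnecessary, since the reduction identity already holds for arbitrary $X\in D_3U_0$).
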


\begin{proof}First we consider  the case $r=0$. In this case,
$\phi_3=x^{*(3)}+\phi_{3,0}$ with $\phi_{3,0}\in U_0^*$ and 
$\kk x^*\p U_0^*=U^*$. Furthermore, either the characteristic of $\kk$ is not two; or else, the  characteristic of $\kk$ is  two but there does not exist a basis $y^*$, $z^*$, $w^*$ for $U_0^*$ with
$\phi_{3,0}$ is  equal to $y^*z^*w^*$. 

Fix some basis $y^*,z^*,w^*$ for $U_0^*$. Let $x,y,z,w$ be the basis for $U$ which is dual to the basis $x^*,y^*,z^*,w^*$ for $U^*$.
Let \begingroup\allowdisplaybreaks
\begin{align*}&\omega_{U_0^*}\text{ be the basis }y^*\w z^*\w w^*\text{ of }\ts\bw^3U_0^*,\\ &\omega_{U_0}\text{ be the basis }y\w z\w w\text{ of }\ts\bw^3U_0,\\ 
&\omega_{U^*}\text{ be the basis }x^*\w y^*\w z^*\w w^*\text{ of }\ts\bw^4U^*,
\text{ and }\\
&\omega_{U}\text{ be the basis }x\w y\w z\w w\text{ of }\ts\bw^4U 
.\end{align*}\endgroup
According to Lemma~\ref{lemma3} 
 there is an element $\boxx\in D_3U_0$ so that $\Gamma_{\phi_{3,0}}(\boxx\t \omega_{U_0})=\omega_{U_0^*}$.
The fact that $x\phi_{3,0}=0$ and $\ell x^{*(3)}=0$ for $\ell\in U_0$ ensures that 
$$\Gamma_{\phi_3}(x\boxx\t \omega_{U})=x^2\phi_3\w \Gamma_{\phi_{3,0}\t \omega_{U_0}}(\boxx)=x^*\w \omega_{U_0^*}=\omega_{U^*}.$$
Conclude that $\Gamma_{\phi_3}\neq 0$.

Now we consider the case $r=3$. In this case,  
 $\phi_3=\a x^{*(3)}+x^*\phi_{2,0}+\phi_{3,0}$, with $\phi_{i,0}$ in $D_iU_0^*$ and $\ell_0\phi_{2,0}$ not zero  for $\ell_0\in U_0=\kk y\p\kk z\p \kk w$. It follows that $y\phi_{2,0}$, $z\phi_{2,0}$, and $w\phi_{2,0}$ are linearly independent in $U_0^*$. Hence, $y\phi_{2,0}\w z\phi_{2,0}\w w\phi_{2,0}$ is nonzero in $\bw^3U_0^*$ and
$$\Gamma_{\phi_3}(x^{4}\t \omega_U)=\a x^*\w y\phi_{2,0}\w z\phi_{2,0}\w w\phi_{2,0}$$ is nonzero in $\bw^4U^*$. \end{proof}

\section{The Macaulay inverse system does not have a cubic term, one case.}\label{nocubic,and}
In this section $\phi_3$ does not have any terms of the form $\ell^{*(3)}$ with $\ell^*\in U^*$, but $\phi_3$ can be written in the form
$$\phi_3=x^{*(2)}y^*+\phi_{2,0}x^*+\phi_{3,0},$$ with $\phi_{i,0}\in D_iU_0^*$,  
where  $U^*=\kk x^*\p U_0^*$ and $y^*$ is a nonzero element of $U_0^*$. 
(This case can be avoided in most characteristics, but is necessary in characteristic three.)
 In Propositions~\ref{6.1} and \ref{NYW} we prove that if $\ell\phi_3$ is nonzero for all nonzero $\ell$ in $U$, then $\Gamma_{\phi_3}$ is not identically zero (and, therefore, $A_{\phi_3}$ has the weak Lefschetz property by 
Lemma~\ref{28.10}.) 
The proof of Propositions~\ref{6.1} and \ref{NYW} proceed like the proof  of Proposition~\ref{5.2}: we assume that $\Gamma_{\phi_3}$ is identically zero and we exhibit a nonzero linear form $\ell$ in $U$ with $\ell\phi_3=0$. 

The hypothesis that $\Gamma_{\phi_3}$ is identically zero imposes further constraints on the form of $\phi_3$ that we identify before beginning the proof of Proposition~\ref{6.1}.

\begin{lemma}
\label{6.0}
Let $x^*$ and $y^*$ be linearly independent elements in the four dimensional vector space $U^*$ over the field $\kk$ and let
$$\phi_3=x^{*(2)}y^*+\phi_{2,0}x^*+\phi_{3,0}$$ be an element of $D_3U^*$ with $\phi_{i,0}\in D_iU_0^*$ for $U^*=\kk x^*\p U_0^*$ and $y^*\in U_0^*$. Assume that $\phi_{3,0}$ does not have any terms of the form $\ell^{*(3)}$ for $\ell^*\in U_0^*$. If $\Gamma_{\phi_3}$ is identically zero, then there exists a basis $y^*,z^*,w^*$ for $U_0^*$ such that
\begin{enumerate}
[\rm (i)]
\item
\label{6.0.i} $\phi_{2,0}$ is in the subspace of $U_0^*$ spanned by $z^{*(2)}$, $y^{*(2)}$, $y^*z^*$, and $y^*w^*$, and
\item
\label{6.0.ii} $\phi_{3,0}$ does not involve $z^*w^{*(2)}$.\end{enumerate}
\end{lemma}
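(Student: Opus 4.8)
The plan is to exploit the homomorphism $\Gamma_{\phi_3}$ exactly as in the proofs of Propositions~\ref{58.2} and \ref{5.2}: compute a handful of values $\Gamma_{\phi_3}(\theta\t\omega_U)$ for well-chosen $\theta\in D_4U$, set them equal to zero, and read off constraints on the coefficients of $\phi_{2,0}$ and $\phi_{3,0}$. First I would fix an arbitrary basis $z^*,w^*$ completing $y^*$ to a basis of $U_0^*$, and write out $\phi_{2,0}=a_1y^{*(2)}+a_2y^*z^*+a_3y^*w^*+a_4z^{*(2)}+a_5z^*w^*+a_6w^{*(2)}$ together with the general cubic $\phi_{3,0}$ in $y^*,z^*,w^*$ that has no pure cube (so $\phi_{3,0}$ uses the seven monomials $y^{*(2)}z^*,y^{*(2)}w^*,y^*z^{*(2)},y^*z^*w^*,y^*w^{*(2)},z^{*(2)}w^*,z^*w^{*(2)}$). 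The goal is to show that after a further change of basis fixing $y^*$ (i.e. replacing $z^*,w^*$ by suitable linear combinations of $z^*,w^*$, and possibly $z^*,w^*$ by combinations also involving $y^*$) one can kill the $z^*w^*$ and $w^{*(2)}$ coefficients $a_5,a_6$ of $\phi_{2,0}$ — leaving (\ref{6.0.i}) — and simultaneously kill the $z^*w^{*(2)}$ coefficient of $\phi_{3,0}$, giving (\ref{6.0.ii}).

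The key observation making this tractable is that $x^2\phi_3=y^*$, $xy\phi_3=x^*+a_1y^*+a_2z^*+a_3w^*$, and more generally $x\ell\phi_3$ for $\ell\in U_0$ lies in $x^*\cdot(\text{stuff})+U_0^*$, while $\ell_1\ell_2\phi_3$ for $\ell_1,\ell_2\in U_0$ lies entirely in $U_0^*$ and only sees $\phi_{2,0}$ and $\phi_{3,0}$. So in expansions of $\Gamma_{\phi_3}(\theta\t\omega_U)$ for $\theta$ a monomial in $y,z,w$ times possibly one $x$, most wedge summands vanish because they either repeat a factor in $\bw^2 U_0^*\subseteq U_0^*$ (one-dimensional obstructions) or land in $\bw^4 U_0^*=0$. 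I would choose $\theta$ among $x^{(3)}y$, $x^{(3)}z$, $x^{(3)}w$, $x^{(2)}z^{(2)}$, $x^{(2)}zw$, $x^{(2)}w^{(2)}$, $xz^{(3)}$, $xz^{(2)}w$, $xzw^{(2)}$, $xw^{(3)}$, $z^{(4)}$, $z^{(3)}w$, etc., following the pattern of Example~\ref{28.8}; each such value is a polynomial in $a_1,\dots,a_6$ and the cubic coefficients, and setting it to zero (using that $P_0$, or here $\kk$, behaves like a domain) yields the desired vanishing relations. Concretely, I expect the $\theta=x^{(3)}\ell$ computations (whose expansion reduces to $y^*\w xy\phi_3\w x\ell\phi_3\w$(third factor), essentially a $2\times2$ or $3\times3$ determinant in the $a_i$) to force enough relations that, after the normalizing change of basis on $U_0^*$, only $a_1,a_2,a_4$ and one more coefficient survive; then a second round with $\theta$ involving $z,w$ only handles the $\phi_{3,0}$ statement.

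The main obstacle will be organizing the change of basis so that the two conclusions (i) and (ii) can be achieved \emph{simultaneously}: a substitution that removes $a_6 w^{*(2)}$ from $\phi_{2,0}$ generally reintroduces terms into $\phi_{3,0}$ (since $\phi_{2,0}$ is multiplied by $x^*$, changing $x^*$ has a ripple effect, and changing $z^*,w^*$ alters both pieces). So the argument should be: first use the $\Gamma_{\phi_3}=0$ hypothesis to derive the algebraic relations among all coefficients; then, separately, show that those relations are exactly strong enough that the subgroup of basis changes preserving the shape $x^{*(2)}y^*+\phi_{2,0}x^*+\phi_{3,0}$ acts with an orbit containing a representative satisfying (i) and (ii). I would handle this by a careful but elementary case analysis on which of $a_5,a_6$ and the $z^*w^{*(2)}$-coefficient are nonzero, in each case writing down an explicit substitution (of the form $z^*\mapsto z^*+\lambda w^*$, $w^*\mapsto w^*$, or $w^*\mapsto w^*+\mu z^*+\nu y^*$, etc.) and verifying it clears the unwanted terms while respecting the relations already forced. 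I do not expect any of the individual $\Gamma_{\phi_3}$ computations to be hard — they are all short determinant expansions of the type already displayed in Example~\ref{28.8} and Lemma~\ref{calculation} — so the real content is bookkeeping the linear algebra of the normal form.
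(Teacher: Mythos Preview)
Your strategy is sound — compute a few values of $\Gamma_{\phi_3}$ and read off constraints — but you are making the argument far harder than it needs to be, and the ``main obstacle'' you anticipate does not exist.

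The paper's proof uses exactly \emph{two} evaluations of $\Gamma_{\phi_3}$, and neither is on your list. For part~(i), the right monomial is $\theta=x^{(4)}$: since $x^2\phi_3=y^*$, $xy\phi_3=x^*+(\text{element of }U_0^*)$, and $xz\phi_3,xw\phi_3\in U_0^*$ with no $x^*$ component, one gets directly
\[
\Gamma_{\phi_3}(x^{(4)}\t\omega_U)=(a_4a_6-a_5^2)\,\omega_{U^*},
\]
involving only the three coefficients $a_4,a_5,a_6$ of $z^{*(2)},z^*w^*,w^{*(2)}$ in $\phi_{2,0}$. Setting this to zero and invoking Lemma~\ref{3.3} on the two-dimensional subspace $\kk z^*\oplus\kk w^*$ (leaving $x^*$ and $y^*$ fixed) puts $a_4z^{*(2)}+a_5z^*w^*+a_6w^{*(2)}$ into the form $a\,z^{*(2)}$. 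The terms $a_1y^{*(2)}+a_2y^*z^*+a_3y^*w^*$ transform within their own span, so (i) is established. Your proposed $x^{(3)}\ell$ computations each have three nonvanishing summands and drag in $\phi_{3,0}$ coefficients as well, so they do not isolate the relation you need.

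For part~(ii) there is no second change of basis and no case analysis. Once (i) holds, write out $\phi_3$ as in (\ref{60.0.1}) and compute the single value $\Gamma_{\phi_3}(x^{(2)}w^{(2)}\t\omega_U)$: because now $xw\phi_3$ and $x^2\phi_3$ both lie in $\kk y^*$, five of the six summands vanish and the survivor gives $l^2\omega_{U^*}$, where $l$ is precisely the coefficient of $z^*w^{*(2)}$. Hence $l=0$ and (ii) follows. Your worry about a basis change ``reintroducing terms into $\phi_{3,0}$'' is misplaced: $x^*$ is never touched, the basis change for (i) acts only inside $\kk z^*\oplus\kk w^*$, and (ii) requires no basis change at all.
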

\begin{remark} Assertion 
(\ref{6.0.i}) can also be written 
$\phi_{2,0}$ does not involve either  $z^*w^*$ or $w^{*(2)}$ and assertion (\ref{6.0.ii}) can also be written $\phi_{3,0}$ is in the subspace of $D_3U_0^*$ spanned by $y^{*(2)}z^*$, $y^{*(2)}w^*$, $y^*z^{*(2)}$, $y^*z^*w^*$, $y^*w^{*(2)}$,  and $z^{*(2)}w^*$.  \end{remark}

\begin{Cases} 
\label{Two Cases}
Once we prove Lemma~\ref{6.0}, then there are two cases.
\begin{enumerate}[\rm C{a}se 1.]
\item\label{Case1} The case where $\phi_{2,0}$ is in the subspace of $U_0^*$ spanned by $y^{*(2)}$, $y^*z^*$, and $y^*w^*$ is treated in Proposition~\ref{NYW}. 
\item\label{Case2} The case where $\phi_{2,0}$ also involves $z^{*(2)}$ is treated in Proposition~\ref{6.1}. 
\end{enumerate}
\end{Cases}

\begin{proof} 
We prove (\ref{6.0.i}). Let $y^*$, $Z^*$, $W^*$ be any basis for $U_0^*$. We are given $a$, $b$, $c$ in $\kk$ with 
$$\phi_3=x^{*(2)}y^*+ax^*Z^{*(2)}+bx^*Z^*W^*+cx^*W^{*(2)}+\phi_3',$$ with
$\phi_3'$ in the subspace of $D_3U_0^*$ spanned by $x^*y^{*(2)}$, $x^*y^*Z^*$, $x^*y^*W^*$,  $y^{*(2)}Z^*$, $y^{*(2)}W^*$, $y^*Z^{*(2)}$, $y^*Z^*W^*$, $y^*W^{*(2)}$,  $Z^{*(2)}W^*$,  and $Z^*W^{*(2)}$.
Let $\omega_U$ and $\omega_{U^*}$ be the bases $x\w y\w Z\w W$ and $x^*\w y^*\w Z^*\w W^*$ of $U$ and $U^*$, respectively.  Observe that
\begingroup\allowdisplaybreaks\begin{align*}\Gamma_{\phi_3}(x^{(4)}\t \omega_U)={}&x^2\phi_3\w xy\phi_3\w xZ\phi_3\w xW\phi_3\\
{}={}&y^*\w (x^*+\theta)\w (\a_1y^*+aZ^*+bW^*)\w (\a_2y^*+bZ^*+cW^*)\\
{}={}&(ac-b^2)\omega_{U^*},
\end{align*}
\endgroup
for some $\theta\in U_0^*$ and some $\a_1$ and $\a_2$ in $\kk$. The hypothesis that $\Gamma_{\phi_3}$ is identically zero guarantees that $$ac-b^2=0.$$ 
Apply Lemma~\ref{3.3} and choose a new basis $z^*$, $w^*$ for $\kk Z^*\p \kk W^*$ so that
$$ax^*Z^{*(2)}+bx^*Z^*W^*+cx^*W^{*(2)}\text{ is in }\kk x^*z^{*(2)}.$$ 
 We have established (\ref{6.0.i}).

We prove (\ref{6.0.ii}). At this point 
\begin{equation}\label{60.0.1}\phi_3= 
\begin{cases} \phantom{+}
  x^{*(2)}y^*
+d x^*y^{*(2)}+e y^{*(2)}z^*+fy^{*(2)}w^*\\
+g x^*z^{*(2)}+h y^*z^{*(2)}+iz^{*(2)}w^*
+k y^*w^{*(2)}+lz^*w^{*(2)}\\
+mx^*y^*z^*+nx^*y^*w^*+py^*z^*w^*,\end{cases}\end{equation}
for parameters $d,\dots,p$ from $\kk$.
Let $\omega_U$ and $\omega_{U^*}$ be the bases $x\w y\w z\w w$ and $x^*\w y^*\w z^*\w w^*$ of $U$ and $U^*$, respectively.
The expansion of 
$\Gamma_{\phi_3}(x^{*(2)}w^{*(2)}\t \omega_U)$ involves six summands. Five of the summands have a factor of $xw\phi_3\w xw\phi_3=0$, 
$$x^2\phi_3\w xw\phi_3\subseteq\ts \bw^2(\kk y^*)=0,\text{ or }
\bw^3\kk(x^2\phi_3,xz\phi_3,xw\phi_3,w^2\phi_3)\subseteq\ts \bw^3(\kk y^*\p\kk z^*)=0.$$ Thus,
 \begin{align*}
&\Gamma_{\phi_3}(x^{*(2)}w^{*(2)}\t \omega_U)\\{}={}&x^2\phi_3\w xy\phi_3\w zw\phi_3\w w^2\phi_3\\{}={}&(y^*\w(x^*+dy^*+mz^*+nw^*)\w(iz^*+lw^*+py^*)\w(ky^*+lz^*)
\\{}={}&l^2\omega_{U^*}.\end{align*}
The hypothesis that $\Gamma_{\phi_3}$ is identically zero guarantees that $l=0$; and this completes the proof of (\ref{6.0.ii}).
\end{proof}

We treat Case~\ref{Case2} of \ref{Two Cases}. The Macaulay inverse system for this case is the $\phi_3$ of (\ref{60.0.1}) with $l$ set equal to zero. We have recorded this Macaulay inverse system as (\ref{60.2.1}).
\begin{proposition}
\label{6.1}
Let $P_0$ be a domain, $U$ be a free $P_0$-module of rank four with dual module $U^*=\Hom_{P_0}(U,P_0)$, $x,y,z,w$ be a basis for $U$ with dual basis $x^*,y^*,z^*,w^*$ for $U^*$. Let $\phi_3\in D_3U^*$ have the form
\begin{equation}\label{60.2.1}\phi_3= 
\begin{cases} \phantom{+}
 a x^{*(2)}y^*
+d x^*y^{*(2)}+e y^{*(2)}z^*+fy^{*(2)}w^*\\
+g x^*z^{*(2)}
+h y^*z^{*(2)}+iz^{*(2)}w^*
+k y^*w^{*(2)}\\
+mx^*y^*z^*+nx^*y^*w^*+py^*z^*w^*,\end{cases}\end{equation}
for parameters $a,\dots,p$ from $P_0$. Assume that 
\begin{enumerate}[\rm(a)]
\item$a\neq 0$ in $P_0$,
\item$g\neq 0$ in $P_0$, 
and 
\item$\ell\phi_3\neq 0$ for all nonzero $\ell$ in $U$. 
\end{enumerate}
 Then $\Gamma_{\phi_3}$ is not identically zero.
\end{proposition}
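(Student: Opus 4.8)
The plan is to argue exactly as in the proof of Proposition~\ref{5.2}: assume, for contradiction, that $\Gamma_{\phi_3}$ is identically zero, and deduce that $\ell\phi_3=0$ for some nonzero $\ell\in U$, contradicting hypothesis~(c). Contracting the $\phi_3$ of (\ref{60.2.1}) by $x$ and by $w$ gives
\begin{align*}
x\phi_3={}&a\,x^*y^*+d\,y^{*(2)}+m\,y^*z^*+n\,y^*w^*+g\,z^{*(2)}\quad\text{and}\\
w\phi_3={}&n\,x^*y^*+f\,y^{*(2)}+p\,y^*z^*+k\,y^*w^*+i\,z^{*(2)},
\end{align*}
so $x\phi_3$ and $w\phi_3$ both lie in the five-dimensional subspace of $D_2U^*$ spanned by $x^*y^*$, $y^{*(2)}$, $y^*z^*$, $y^*w^*$, $z^{*(2)}$, with coordinate matrix
$$M=\bmatrix a&d&m&n&g\\ n&f&p&k&i\endbmatrix.$$
If every $2\times2$ minor of $M$ vanishes then $M$ has rank at most one, so there is a nonzero pair $(\lambda,\mu)$, which may be taken in $P_0$ after clearing denominators, with $\lambda\, x\phi_3+\mu\, w\phi_3=0$; then $\lambda x+\mu w$ is a nonzero element of $U$ with $(\lambda x+\mu w)\phi_3=0$, the desired contradiction. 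Thus it suffices to show that $\Gamma_{\phi_3}\equiv 0$ forces all $2\times2$ minors of $M$ to vanish, and, since $a\neq0$, it is enough to treat the four minors pairing the first column with each of the others, namely $af-dn$, $ap-mn$, $ak-n^2$, and $ai-gn$ (alternatively, since $g\neq0$, the four through the last column).

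To obtain these relations I would first record all one- and two-step contractions $\ell_1\ell_2\phi_3$ with $\ell_1,\ell_2\in\{x,y,z,w\}$ and note the coincidences that collapse nearly every summand of a $\Gamma_{\phi_3}$-evaluation: $x^2\phi_3$, $xw\phi_3$, and $w^2\phi_3$ are all scalar multiples of $y^*$; $xz\phi_3$ and $zw\phi_3$ lie in $\kk y^*\p\kk z^*$; and $xy\phi_3$ is the only contraction with a nonzero $x^*$-component. By Observation~\ref{28.9} it is enough to evaluate $\Gamma_{\phi_3}$ on $X\t\omega_U$ for $X$ a degree-four divided-power monomial in $x,y,z,w$, and after the collapse each such evaluation reduces to a short wedge which, expanded through the formulas of Example~\ref{28.8}, equals a polynomial in $a,\dots,p$ times $\omega_{U^*}$. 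For instance
$$\Gamma_{\phi_3}(x^{(3)}y\t\omega_U)=-ag(ak-n^2)\,\omega_{U^*}\qquad\text{and}\qquad \Gamma_{\phi_3}(x^{(2)}z^{(2)}\t\omega_U)=(ai-gn)^2\,\omega_{U^*},$$
so, $P_0$ being a domain in which $a$ and $g$ are nonzero, $ak=n^2$ and $ai=gn$ follow at once. I would then compute a short further list of evaluations (candidates include $x^{(2)}y^{(2)}$, $x^{(2)}yz$, $xyz^{(2)}$, $xz^{(3)}$, $xyzw$) and, exactly in the style of Lemma~\ref{l5.2} and Claim~\ref{58.2.15}, exhibit each of the remaining minors $af-dn$ and $ap-mn$ (or a suitable unit or square multiple of it) as an explicit $P_0$-linear combination of these outputs together with the relations $ak=n^2$ and $ai=gn$ already in hand. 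As $P_0$ is a domain with $a,g$ nonzero, each such minor then vanishes, completing the argument.

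The main obstacle is this last bookkeeping step: one must choose the list of degree-four monomials so that, once the recurring factors of $a$ and $g$ are cancelled, the resulting polynomial identities provably annihilate \emph{all four} of the chosen minors, rather than merely some products or squares of combinations of them. Two points deserve care. First, several of the $\Gamma_{\phi_3}(X\t\omega_U)$ carry an overall factor of $2$ (visible already in the $\ell_1\ell_2\ell_3\ell_4$ expansion of Example~\ref{28.8}), so the list must contain, for every minor, an evaluation whose leading coefficient is $a$ or $g$ and not $2$. Second, the degenerate subcase $n=0$ should be checked separately, but there it reduces quickly: the two displayed relations already force $k=i=0$, and one further evaluation forces $f=p=0$, whence $w\phi_3=0$ outright. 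Once all four first-column minors of $M$ are shown to vanish, $x\phi_3$ and $w\phi_3$ are linearly dependent and the contradiction with hypothesis~(c) is reached.
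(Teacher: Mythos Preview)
Your proposal is correct and follows essentially the same route as the paper: assume $\Gamma_{\phi_3}\equiv 0$, show the $2\times 5$ coordinate matrix of $(x\phi_3,w\phi_3)$ has rank one, and contradict hypothesis~(c). Your two displayed evaluations, for $x^{(3)}y$ and $x^{(2)}z^{(2)}$, agree with the paper's Lemma~\ref{60.2}(a),(b), and your separate treatment of $n=0$ mirrors the paper's own case split.

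The one place where the paper's execution differs from your sketch is in the choice of remaining evaluations and target minors. Rather than your candidate list ($x^{(2)}y^{(2)}$, $x^{(2)}yz$, $xyz^{(2)}$, $xz^{(3)}$, $xyzw$) aimed at the column-1 minors $af-dn$ and $ap-mn$, the paper evaluates $z^{(4)}$, $z^{(2)}w^{(2)}$, and $xy^{(3)}$, obtaining $(im-gp)^2$, $(gk-in)^2$, and a complicated polynomial $I_0$; the first two give the column-5 minors directly, and then a separate algebraic identity (Lemma~\ref{60.D}) extracts $ag^2(dk-fn)^2$ from $I_0$ modulo the relations already in hand. The paper thus lands on the five minors $(1,4)$, $(1,5)$, $(3,5)$, $(4,5)$, $(2,4)$ rather than your $(1,2)$, $(1,3)$, $(1,4)$, $(1,5)$, and finishes with the same $n\neq 0$ versus $n=0$ split (using $I_0=-af^2g$ when $n=p=i=k=0$). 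Your warning that the ``bookkeeping step'' is the real obstacle is apt: the paper's need for Lemma~\ref{60.D} shows that one of the minors does not fall out of a single evaluation but requires a genuine syzygy among several.
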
 

\begin{proof} 
The proof is by contradiction. We assume that $\Gamma_{\phi_3}$ is identically zero; we prove that the elements 
\begin{align*}x\phi_3={}&ax^*y^*+dy^{*(2)}+my^*z^*+ny^*w^*+gz^{*(2)}\text{ and}\\
w\phi_3={}&nx^*y^*+fy^{*(2)}+py^*z^*+ky^*w^*+iz^{*(2)}\end{align*}
of $U^*$ are linearly dependent.
We do this by showing that the $2\times 2$ minors of
the matrix
\begin{equation}\label{matrix2}\bmatrix 
a&d&m&n&g\\
n&f&p&k&i\endbmatrix\end{equation} are all zero.

In Lemma~\ref{60.2} we calculate $\Gamma_{\phi_3}(-\t \omega_U)$ for five elements of $D_3U$. The assumption that $\Gamma_{\phi_3}$ is identically zero, together with the hypothesis that $ag\neq 0$ guarantees that 
$n^2-ak$, $ai-gn$, $im-gp$, $gk-in$, 
and \begin{equation} \label{I0}
I_0=- af^2g - d^2 gk - adhk + dkm^2  
+ 2dfgn + dhn^2  - 2dmnp + adp^2\end{equation}
all are zero. 
In Lemma~\ref{60.D} we show that $dk-fn$ is also zero.

The entries $a$ and $g$ of matrix~\ref{matrix2} are nonzero. We know that columns 1 and 4
 are linearly dependent, as are  columns 1 and 5, columns 3 and 5,  columns  4 and 5,  and columns 2 and 4.
If $n\neq 0$, then column 4 is a non-zero multiple of column 1 and therefore columns 1 and 2 are linearly dependent, indeed, in this case, column 1  is a basis for the column space of the matrix (\ref{matrix2}). 

On the other hand, if $n=0$, then the fact that columns 1,3,4,5 are all multiples of column 1 and $n$ is the bottom entry of column 1 forces $n=p=i=k=0$. When   $n=p=i=k=0$, then $\Gamma_{\phi_3}(xy^{(3)}\t \omega_U)=I_0\omega_{U^*}$ (from Lemma \ref{60.2}) becomes $-af^2g=0$. The parameters  $a$ and $g$ are nonzero; hence $f=0$ and once again 
the matrix (\ref{matrix2}) has rank one.

Thus, in every case,  $x\phi_3$ and $w\phi_3$ are linearly dependent and  there is a nonzero linear form $\ell=\a x+\beta w$ with $\ell \phi_3=0$, which is a contradiction.  
\end{proof}

\begin{lemma}
 Retain the notation of Proposition~{\rm\ref{6.1}}. Let $\omega_U$ and $\omega_{U^*}$ be the basis elements $x\w y\w z\w w$ and $x^*\w y^*\w z^*\w
 w^*$ of $\bw^4U$ and $\bw^4U^*$, respectively. \label{60.2} Then the following statements hold{\rm :}
 \begin{enumerate}[\rm(a)]
\item\label{6.4a}$\Gamma_{\phi_3}(x^{(3)}y\t \omega_U)=ag(n^2-ak)\omega_{U^*}$,
\item \label{6.4b}$\Gamma_{\phi_3}(x^{(2)}z^{(2)}\t \omega_U)=(ai-gn)^2\omega_{U^*}$,
\item\label{6.4c}$\Gamma_{\phi_3}(z^{(4)}\t \omega_U)=(im-gp)^2\omega_{U^*}$,
\item\label{6.4d}  $\Gamma_{\phi_3}(z^{(2)}w^{(2)}\t \omega_U)=(gk-in)^2\omega_{U^*}$,  and
\item\label{6.4e} $\Gamma_{\phi_3}(xy^{(3)}\t \omega_U)=I_0 
\omega_{U^*}$, where $I_0$ is given in {\rm(\ref{I0})}.
\end{enumerate} 
\end{lemma}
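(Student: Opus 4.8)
The proof of Lemma~\ref{60.2} is a direct computation, carried out exactly as in Lemmas~\ref{calculation} and~\ref{l5.2}: for each of the five elements $X\in D_4U$ in (\ref{6.4a})--(\ref{6.4e}) I would expand $\Gamma_{\phi_3}(X\t\omega_U)$ using the formulas of Example~\ref{28.8}, discard the summands that vanish, and evaluate the wedge products that remain.

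\textbf{Step 1 (bookkeeping).} First I would record the ten partial contractions $\ell_i\ell_j\phi_3\in U^*$ ($\ell_i,\ell_j\in\{x,y,z,w\}$) read off from (\ref{60.2.1}): $x^2\phi_3=ay^*$, $xw\phi_3=ny^*$, $w^2\phi_3=ky^*$, $xz\phi_3=my^*+gz^*$, $zw\phi_3=py^*+iz^*$, $y^2\phi_3=dx^*+ez^*+fw^*$, $z^2\phi_3=gx^*+hy^*+iw^*$, together with the full vectors $xy\phi_3=ax^*+dy^*+mz^*+nw^*$, $yz\phi_3=mx^*+ey^*+hz^*+pw^*$, $yw\phi_3=nx^*+fy^*+pz^*+kw^*$. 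Two structural facts do all the work: $x^2\phi_3$, $xw\phi_3$, $w^2\phi_3$ all lie on the line $\kk y^*$, and $xz\phi_3$, $zw\phi_3$ both lie in the plane $\kk y^*\p\kk z^*$. Hence a four-fold wedge is forced to be zero once it contains two of $x^2\phi_3,xw\phi_3,w^2\phi_3$, or three members of $\{x^2\phi_3,xw\phi_3,w^2\phi_3,xz\phi_3,zw\phi_3\}$.

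\textbf{Step 2 (the five expansions).} For each $X$ I would identify $\ell_1$ in Example~\ref{28.8} with the most repeated variable and order the remaining letters so that $\ell_1\w\ell_2\w\ell_3\w\ell_4$ equals $\omega_U=x\w y\w z\w w$ on the nose, which removes any stray overall sign (so in (\ref{6.4a}) one uses $(\ell_1,\dots,\ell_4)=(x,y,z,w)$, in (\ref{6.4b}) $(x,z,w,y)$, in (\ref{6.4c}) $(z,x,y,w)$, in (\ref{6.4d}) $(z,w,x,y)$, in (\ref{6.4e}) $(y,x,w,z)$). Then: formula (\ref{6.4a}) has three summands, two containing $x^2\phi_3\w xw\phi_3=0$, and the survivor $x^2\phi_3\w xy\phi_3\w xz\phi_3\w yw\phi_3$ expands to $ag(n^2-ak)\omega_{U^*}$; formulas (\ref{6.4b}) and (\ref{6.4d}) have four summands each, two of which die, the remaining two combining into the perfect squares $(ai-gn)^2$ and $(gk-in)^2$; formula (\ref{6.4c}) has the single summand $z^2\phi_3\w xz\phi_3\w yz\phi_3\w zw\phi_3$, and since $xz\phi_3\w zw\phi_3=(mi-gp)\,y^*\w z^*$ this collapses at once to $(im-gp)^2\omega_{U^*}$; and formula (\ref{6.4e}) has three summands, \emph{all} of which survive, each reducing --- after one factors out the $\kk y^*$- or $\kk z^*$-component supplied by its single ``small'' factor --- to an explicit $3\times 3$-sized determinant computation, the three of which add up to the polynomial $I_0$ of (\ref{I0}).

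\textbf{The main obstacle.} Parts (\ref{6.4a})--(\ref{6.4d}) are short because of the collapses just indicated; the one laborious piece is (\ref{6.4e}), where nothing collapses, three $3\times 3$-type determinants each expand into several monomials, and $I_0$ appears only after the cross terms cancel. There is no conceptual difficulty, but this is precisely where an arithmetic or orientation error is most likely, so I would re-derive the eight coefficients of $I_0$ a second way and cross-check against a \texttt{Macaulay2} evaluation, as is done throughout the paper. Finally, working over an arbitrary domain $P_0$ rather than a field is harmless: all five identities are polynomial identities in the parameters $a,\dots,p$.
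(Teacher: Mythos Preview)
Your proposal is correct and follows the same computational approach as the paper: expand via Example~\ref{28.8}, discard summands that vanish because too many factors land in $\kk y^*$ or $\kk y^*\oplus\kk z^*$, and evaluate the survivors. The only cosmetic differences are that the paper counts summands from the raw co-multiplication (so it reports four, six, six, four summands in (\ref{6.4a}), (\ref{6.4b}), (\ref{6.4d}), (\ref{6.4e}) where you, working from the already-simplified formulas of Example~\ref{28.8}, report three, four, four, three), and that in (\ref{6.4e}) the survivor containing $xz\phi_3=my^*+gz^*$ is recorded in the paper as a single $4\times 4$ determinant rather than as a $3\times3$-sized object---your description ``$3\times3$-sized'' is slightly loose there, but the arithmetic is the same.
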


\begin{proof} 
(\ref{6.4a}) The expansion of $\Gamma_{\phi_3}(x^{(3)}y\t \omega_U)$ has four summands. Three of the summands have a factor of  $xy\phi_3\w xy\phi_3=0$ 
or $x^2\phi_3\w xw\phi_3\in \bw^2P_0 y^*=0$. It follows that
\begingroup\allowdisplaybreaks\begin{align*}
&\Gamma_{\phi_3}(x^{(3)}y\t \omega_U)\\
{}={}&
 x^2\phi_3\w xy\phi_3\w xz\phi_3\w yw\phi_3
\\
{}={}&ay^*\w (ax^*+dy^*+mz^*+nw^*)\w (gz^*+my^*)\w (fy^*+kw^*+nx^*+pz^*)\\
{}={}& -ag(ak-n^2)\omega_{U^*}.
\end{align*}\endgroup

\ms\noindent (\ref{6.4b}) The expansion of $\Gamma_{\phi_3}(x^{(2)}z^{(2)}\t \omega_U)$ has six summands. Four of the summands contain a factor of
$xz\phi_3\w xz\phi_3=0$, or $x^2\phi_3\w xw\phi_3\in \bw^2\kk y^*=0$, or
$$x^2\phi_3\w xz\phi_3\w zw\phi_3\in \ts\bw^3(\kk y^*\p \kk z^*)=0.$$
It follows that \begingroup\allowdisplaybreaks\begin{align*}
&
\Gamma_{\phi_3}(x^{(2)}z^{(2)}\t \omega_U)\\
{}={}&\begin{cases}\phantom{+}
 x^2\phi_3\w xy\phi_3\w z^2\phi_3\w zw\phi_3\\
+xz\phi_3\w  xy\phi_3\w z^2\phi_3\w xw\phi_3\\
\end{cases}
  \\ 
{}={}&\begin{cases}\phantom{+}
 ay^*\w (ax^*+dy^*+mz^*+nw^*)\w (gx^*+hy^*+iw^*)\w (iz^*+py^*)\\
+(gz^*+my^*)\w  (ax^*+dy^*+mz^*+nw^*)\w  (gx^*+hy^*+iw^*)\w  ny^*\\
\end{cases}\\
{}={}&\Big(ai(ai-ng)-gn(ai-gn)\Big)\omega_{U^*}=(ai-ng)^2\omega_{U^*}.
\end{align*}\endgroup

\ms\noindent (\ref{6.4c}) One computes that
\begingroup\allowdisplaybreaks\begin{align*}
&
\Gamma_{\phi_3}(z^{(4)}\t \omega_U)\\
{}={}&xz\phi_3\w yz\phi_3\w z^2\phi_3\w zw\phi_3\\
{}={}&(gz^*+my^*)\w (ey^*+hz^*+mx^*+pw^*)\w (gx^*+hy^*+iw^*)\w (iz^*+py^*)\\
{}={}&(mi-gp)^2\omega_{U^*}.\end{align*}\endgroup

\ms\noindent (\ref{6.4d}) The expansion of $\Gamma_{\phi_3}(z^{(2)}w^{(2)}\t \omega_U)$ has six summands. Four of the summands have a factor of $zw\phi_3\w zw\phi_3=0$, or
 $$\ts xz\phi_3\w zw\phi_3\w w^2\phi_3\in \bw^3(\kk y^*\p \kk z^*)=0, \quad\text{ or }\quad xw\phi_3\w w^2\phi_3\in \bw^2\kk y^*=0.$$ It follows that 
\begingroup\allowdisplaybreaks\begin{align*}
&
\Gamma_{\phi_3}(z^{(2)}w^{(2)}\t \omega_U)\\
{}={}&xz\phi_3\w yw\phi_3\w z^2\phi_3\w w^2\phi_3+xw\phi_3\w yw\phi_3\w z^2\phi_3\w zw\phi_3\\
{}={}&\begin{cases}\phantom{+}
 (gz^*+my^*)\w (fy^*+kw^*+nx^*+pz^*)\w (gx^*+hy^*+iw^*)\w ky^*\\
+ny^*\w (fy^*+kw^*+nx^*+pz^*)\w (gx^*+hy^*+iw^*)\w (iz^*+py^*)\\
\end{cases}
\\
{}={}&
 -kg(ni-gk)\omega_{U^*}
+ni(ni-gk)\omega_{U^*}
\\
{}={}&(ni-gk)^2\omega_{U^*}.
\end{align*}\endgroup

\ms\noindent (\ref{6.4e}) The expansion of $\Gamma_{\phi_3}(xy^{(3)}\t \omega_U)$ has four summands. One of the summands has a factor of $xy\phi_3\w xy\phi_3=0$. Thus,
\begingroup\allowdisplaybreaks\begin{align*}&\Gamma_{\phi_3}(xy^{(3)}\t \omega_U)\\{}={}& \begin{cases}\phantom{+}
 x^2\phi_3\w y^2\phi_3 \w yz\phi_3\w yw\phi_3\\
+xy\phi_3\w y^2\phi_3 \w xz\phi_3\w yw\phi_3\\
+xy\phi_3\w y^2\phi_3 \w yz\phi_3\w xw\phi_3\end{cases}\\
{}={}& \begin{cases}\phantom{+}
 \begin{cases}ay^*\w (dx^*+ez^*+fw^*)  \w (ey^*+hz^*+mx^*+pw^*)\\\w (fy^*+kw^*+nx^*+pz^*)\end{cases}\\
+\begin{cases}(ax^*+dy^*+mz^*+nw^*)\w (dx^*+ez^*+fw^*) \w (gz^*+my^*)\\
\w (fy^*+kw^*+nx^*+pz^*)\end{cases}\\
+\begin{cases}(ax^*+dy^*+mz^*+nw^*)\w (dx^*+ez^*+fw^*)\\ \w (ey^*+hz^*+mx^*+pw^*)\w ny^*\end{cases}\end{cases}\\
{}={}&\left( 
-a\det\bmatrix d&e&f\\m&h&p\\n&p&k\endbmatrix 
+\det\bmatrix a&d&m&n\\d&0&e&f\\0&m&g&0\\n&f&p&k\endbmatrix
+n\det\bmatrix a&m&n\\d&e&f\\m&h&p\endbmatrix\right)\omega_{U^*}\\
{}={}&(-af^2g-d^2gk-adhk+dkm^2+2dfgn+dhn^2-2dmnp+adp^2)\omega_{U^*}
\\{}={}& I_0\omega_{U^*}.\end{align*}\endgroup
\vskip-24pt\end{proof}

\begin{lemma}
\label{60.D} The parameters of Proposition~{\rm\ref{6.1}} satisfy
$$ag^2(dk-fn)^2\in (n^2-ak,ai-gn,im-gp).$$\vskip-24pt\end{lemma}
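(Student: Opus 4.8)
The plan is to exhibit the element $ag^2(dk-fn)^2$ explicitly as a $P_0$-linear combination of the three generators $n^2-ak$, $ai-gn$, $im-gp$, so that Lemma~\ref{60.D} reduces to a finite polynomial identity that can be checked by hand or with Macaulay2. The natural organizing object is the $2\times5$ matrix
$$M=\bmatrix a&d&m&n&g\\ n&f&p&k&i\endbmatrix$$
of (\ref{matrix2}). Write $p_{ij}$ for the minor of $M$ on columns $i$ and $j$; then $n^2-ak=-p_{14}$, $ai-gn=p_{15}$, $im-gp=p_{35}$, $dk-fn=p_{24}$, and $ni-gk=p_{45}$, so what must be shown is $ag^2p_{24}^2\in(p_{14},p_{15},p_{35})$. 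Note that, once this is known, the hypotheses $a\neq0\neq g$ of Proposition~\ref{6.1} immediately give $p_{24}=dk-fn=0$ in the domain $P_0$, which is the use to which the lemma is put.

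The engine of the argument is the family of quadratic Grassmann--Pl\"ucker relations among the $2\times2$ minors of $M$, namely $p_{ij}p_{kl}-p_{ik}p_{jl}+p_{il}p_{jk}=0$ for every four-element set $i<j<k<l$ of columns; these are precisely the exterior-algebra syzygies that sit behind the pairing $\bowtie$ of Section~\ref{Gamma}. The relations for the column sets $\{1,2,4,5\}$, $\{1,3,4,5\}$, and $\{1,2,3,4\}$ rewrite the products $p_{15}p_{24}$, $p_{15}p_{34}$, $p_{13}p_{24}$, and $p_{14}p_{23}$ as combinations of products that already carry a factor of $p_{14}$ or $p_{15}$; supplemented by the one-line identity $a\,p_{45}+g\,p_{14}-n\,p_{15}=0$, which places $a\,p_{45}$ in $(p_{14},p_{15})$, these let one strip the ``off-list'' minors $p_{12},p_{13},p_{23},p_{25},p_{34},p_{45}$ out of a candidate expansion of $ag^2p_{24}^2$ at the cost of a few extra factors of $a$ and $g$. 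Pushing this reduction to completion produces the required cofactors; equivalently one reduces $ag^2(dk-fn)^2$ against a Gr\"obner basis of $(p_{14},p_{15},p_{35})$ and reads off that it vanishes.

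I expect the real work --- and the main obstacle --- to be the bookkeeping in that reduction rather than any conceptual point: the Pl\"ucker substitutions keep reintroducing off-list minors, and to make the combination close up cleanly it is natural to bring in as well the relations $gk-in=0$ and $I_0=0$ that the proof of Proposition~\ref{6.1} has already established (from $\Gamma_{\phi_3}(z^{(2)}w^{(2)}\t\omega_U)=0$ and $\Gamma_{\phi_3}(xy^{(3)}\t\omega_U)=0$; see Lemma~\ref{60.2}). Keeping accurate track of the cofactors through this chain of substitutions is the step that requires care.
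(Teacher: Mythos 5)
Your reduction of the lemma to the claim $ag^2p_{24}^2\in(p_{14},p_{15},p_{35})$ is where the argument breaks down: that ideal membership is false, even after passing to the radical, so no chain of Pl\"ucker substitutions --- which are polynomial identities and can therefore only certify true memberships --- will ever close up. Concretely, specialize $a=g=d=n=k=i=1$, $f=0$, $p=m$ (with $e,h,m$ arbitrary). Then $n^2-ak=ai-gn=im-gp=0$, and also $gk-in=0$, yet $ag^2(dk-fn)^2=1$. Hence the target element does not even vanish on the common zero locus of the three (or four) minors you list, and reducing it against a Gr\"obner basis of $(p_{14},p_{15},p_{35})$ does \emph{not} ``read off that it vanishes.''

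The ingredient you relegate to bookkeeping is in fact indispensable: at the specialization above one computes $I_0=-1$, and $I_0=0$ is exactly what rescues the statement. The paper's proof is a single explicit identity
\begin{equation*}
ag^2(dk-fn)^2=-gn^2I_0+c_1(n^2-ak)+c_2(ai-gn)+c_3(im-gp)
\end{equation*}
with explicit polynomial cofactors $c_1,c_2,c_3$; since $I_0$ has already been shown to vanish via the separate evaluation $\Gamma_{\phi_3}(xy^{(3)}\t\omega_U)=0$ of Lemma~\ref{60.2}.(\ref{6.4e}), the asserted membership holds in $P_0$. (The lemma must be read in that context; as a literal ideal membership in a polynomial ring in the parameters it fails.) Note also that $I_0$ cannot be manufactured from the $2\times2$ minors of the matrix (\ref{matrix2}): it involves the parameter $h$, which does not appear in that matrix, so it arises from a genuinely different $\Gamma_{\phi_3}$ computation rather than from the Pl\"ucker calculus. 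To repair your proposal you must promote $I_0=0$ from an optional convenience to a generator of the ideal you reduce against --- equivalently, verify the displayed identity directly.
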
 

\begin{proof}
Recall that $I_0$, from (\ref{I0}), is equal to zero. 
A straightforward calculation shows that
\begingroup\allowdisplaybreaks
\begin{align*}
&-gn^2 I_0\\
&+   (-d^2g^2k+2dfg^2n-dim^2n+dghn^2)(n^2-ak)\\ 
& + (-dkm^2n+dmn^2p)(ai-gn)\\
& + (dmn^3-adn^2p)(im-gp)
\end{align*}\endgroup
is equal to $ag^2(dk-fn)^2$.
\end{proof}

\section{The Macaulay inverse system does not have a cubic term, the other case.}\label{nocubic,other}
We treat Case~\ref{Case1} from \ref{Two Cases}. The statement is the same as  the statement of Proposition~\ref{6.1}, except now ``$g$'' is equal to zero.

\begin{proposition}
\label{NYW}
Let $P_0$ be a domain, $U$ be a free $P_0$-module of rank four with dual module $U^*=\Hom_{P_0}(U,P_0)$, $x,y,z,w$ be a basis for $U$ with dual basis $x^*,y^*,z^*,w^*$ for $U^*$. Let $\phi_3\in D_3U^*$ have the form
\begin{equation}\label{7.2.1}\phi_3= 
\begin{cases} \phantom{+}
 a x^{*(2)}y^*
+d x^*y^{*(2)}+e y^{*(2)}z^*+fy^{*(2)}w^*\\
+h y^*z^{*(2)}+iz^{*(2)}w^*
+k y^*w^{*(2)}\\
+mx^*y^*z^*+nx^*y^*w^*+py^*z^*w^*,\end{cases}\end{equation}
for parameters $a,\dots,p$ from $P_0$. Assume that 
\begin{enumerate}[\rm(a)]
\item$a\neq 0$ in $P_0$, and 
\item$\ell\phi_3\neq 0$ for all nonzero $\ell$ in $U$. 
\end{enumerate}
 Then $\Gamma_{\phi_3}$ is not identically zero.
\end{proposition}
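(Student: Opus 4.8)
The plan is to argue by contradiction, in the spirit of the proof of Proposition~\ref{6.1}: assume $\Gamma_{\phi_3}$ is identically zero and exhibit a nonzero $\ell\in U$ with $\ell\phi_3=0$, contradicting hypothesis~(b). Put $\omega_U=x\w y\w z\w w$ and $\omega_{U^*}=x^*\w y^*\w z^*\w w^*$; by Observation~\ref{28.9} the hypothesis gives $\Gamma_{\phi_3}(-\t\omega_U)=0$ on every monomial of $D_4U$. The first step is to show $i=0$: since ``$g$'' is absent from (\ref{7.2.1}), the three quadratic contractions $x^2\phi_3=ay^*$, $xz\phi_3=my^*$, $xw\phi_3=ny^*$ are scalar multiples of $y^*$, so every summand but one in the expansion of $\Gamma_{\phi_3}(x^{(2)}z^{(2)}\t\omega_U)$ drops out (a repeated wedge factor, or two $y^*$-multiples), and the survivor equals $a^2i^2\,\omega_{U^*}$; as $P_0$ is a domain and $a\neq0$, this forces $i=0$.

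With $i=0$ every term of $\phi_3$ is divisible by $y^*$, so all six quadratic contractions $x^2\phi_3$, $xz\phi_3$, $xw\phi_3$, $z^2\phi_3$, $zw\phi_3$, $w^2\phi_3$ are multiples of $y^*$, while $x\phi_3$, $z\phi_3$, $w\phi_3$ lie in the four-dimensional space $W=\kk x^*y^*\p\kk y^{*(2)}\p\kk y^*z^*\p\kk y^*w^*$, with coordinate vectors $(a,d,m,n)$, $(m,e,h,p)$, $(n,f,p,k)$ in that ordered basis. So it suffices to show that $x\phi_3$, $z\phi_3$, $w\phi_3$ are linearly dependent over $P_0$: a nonzero relation among them gives the required $\ell\in\kk x\p\kk z\p\kk w$. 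Let $B$ be the $3\times4$ matrix with those three rows, let $S=\begin{bmatrix}a&m&n\\m&h&p\\n&p&k\end{bmatrix}$ be its (symmetric) submatrix on the first, third, and fourth columns, and let $v=(\Delta_{234},\Delta_{124},-\Delta_{123})$, where $\Delta_{234}$, $\Delta_{124}$, $\Delta_{123}$ are the maximal minors of $B$ on the column sets $\{2,3,4\}$, $\{1,2,4\}$, $\{1,2,3\}$; the fourth maximal minor of $B$ is $\det S$.

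Next I would compute $\Gamma_{\phi_3}(-\t\omega_U)$ on a short list of further monomials; each expansion has only a handful of surviving summands because of the abundance of $y^*$-multiples, exactly as in Lemma~\ref{60.2}. Evaluation on $x^{(2)}y^{(2)}$ yields a unit multiple of $a\det S$, so $\det S=0$. Evaluations on $xy^{(3)}$, $zy^{(3)}$, $wy^{(3)}$ yield the three entries of the column vector $Sv$, and evaluation on $y^{(4)}$ yields $(d,e,f)\cdot v$; since $a\neq0$ and $\Gamma_{\phi_3}\equiv0$, we get $Sv=0$ and $(d,e,f)\cdot v=0$. Two cases remain. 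If $v\neq0$, then $\ell=\Delta_{234}\,x+\Delta_{124}\,z-\Delta_{123}\,w$ is a nonzero element of $U$, and, by the description of the coordinate vectors above, the $x^*y^*$-, $y^*z^*$-, $y^*w^*$-coordinates of $\ell\phi_3$ are the entries of $Sv$ and the $y^{*(2)}$-coordinate is $(d,e,f)\cdot v$, all zero; hence $\ell\phi_3=0$. If $v=0$, then $\Delta_{123}=\Delta_{124}=\Delta_{234}=0$, and with $\det S=0$ this says all four maximal minors of $B$ vanish, so $\rank B\le2$, the rows $x\phi_3,z\phi_3,w\phi_3$ are linearly dependent, and again a nonzero $\ell\in\kk x\p\kk z\p\kk w$ annihilates $\phi_3$. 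In both cases hypothesis~(b) fails, a contradiction.

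The one genuinely laborious part is the wedge-product bookkeeping that matches the six values of $\Gamma_{\phi_3}(-\t\omega_U)$ with the stated polynomials in $a,\dots,p$ — routine but sign-sensitive, of the same nature as Lemmas~\ref{calculation}, \ref{l5.2}, and~\ref{60.2}, and best isolated in its own calculational lemma as is done there. Conceptually little is needed: $g=0$ forces $i=0$, divisibility of $\phi_3$ by $y^*$ collapses almost every summand, and the endgame is just the Cramer identity that the vector $v$ of maximal minors of a $3\times4$ matrix lies in the kernel of each of its $3\times3$ submatrices, together with the trivial observation that when $v=0$ the matrix already has rank at most two. I do not expect a real obstacle beyond sign discipline and making sure the $v=0$ branch is not overlooked.
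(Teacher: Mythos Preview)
Your proof is correct and tracks the paper's closely: both reduce to $i=0$ via $\Gamma_{\phi_3}(x^{(2)}z^{(2)}\t\omega_U)=a^2i^2\,\omega_{U^*}$, then use $\Gamma_{\phi_3}(x^{(2)}y^{(2)}\t\omega_U)$ to obtain $\det S=0$ and $\Gamma_{\phi_3}(y^{(4)}\t\omega_U)$ to obtain $\det M=0$ (equivalently $(d,e,f)\cdot v=0$, since $\det M=-(d\Delta_{234}+e\Delta_{124}-f\Delta_{123})$). The endgames differ. The paper stops after these two evaluations and invokes the Pl\"ucker-type identities $(\det M[2;j])^2=\det M[j;j]\det M[2;2]-\det M[2,j;2,j]\det M$ to conclude that the remaining three maximal minors $\Delta_{234},\Delta_{124},\Delta_{123}$ of $B$ also vanish, i.e.\ that $v=0$ always. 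You instead perform three further evaluations $\Gamma_{\phi_3}(\{x,z,w\}y^{(3)}\t\omega_U)$ to get $Sv=0$ and then case-split on $v$: when $v\neq0$, the relations $Sv=0$ and $(d,e,f)\cdot v=0$ say precisely that $v^TB=0$, so $\ell=v_1x+v_2z+v_3w$ kills $\phi_3$ directly.

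Two remarks. First, your three extra evaluations are in fact redundant: the Laplace expansion of the singular $4\times4$ matrix obtained by stacking row $j$ of $B$ atop $B$ gives $(Sv)_j=(\text{column-}2\text{ entry of row }j\text{ of }B)\cdot\det S$, so $Sv=0$ already follows from $\det S=0$. Second, your closing remark about ``the Cramer identity that $v$ lies in the kernel of each $3\times3$ submatrix'' is not quite accurate as stated; what is true is the identity just mentioned, which needs $\det S=0$. Either route is valid and of comparable length; yours avoids the Pl\"ucker relations at the cost of handling a case ($v\neq0$) that the paper's identities show cannot occur.
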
 

\begin{proof} 
The proof is by contradiction. We suppose that $\Gamma_{\phi_3}$
is identically zero. We prove that there is a nonzero element $\ell$ of $U$ with $\ell\phi_3=0$. 

We first show that $i=0$. Fix the bases $$\omega_U=x\w y\w z\w w\quad\text{and}\quad \omega_{U^*}=x^*\w y^*\w z^*\w w^*$$   of $\bw^4U$ and $\bw^4U^*$, respectively. The expansion of $\Gamma_{\phi_3}(x^{(2)}z^{(2)}\t \omega_U)$ has six summands; however five of the summands have a factor of $xz\phi_z\w xz\phi_3=0$ or a factor from $$\ts\bw^2(\kk x^2\phi_3+\kk xz\phi_3+ \kk xw\phi_3)\subseteq \bw^2\kk y^*=0.$$ Thus, \begingroup\allowdisplaybreaks
\begin{align*}&\Gamma_{\phi_3}(x^{(2)}z^{(2)}\t \omega_U)\\
{}={}&
x^2\phi_3\w xy\phi_3\w  z^2\phi_3\w  zw \phi_3\\
{}={}&(ay^*)\w (ax^*+dy^*+mz^*+nw^*)\w  (hy^*+iw^*)\w  (iz^*+py^*)\\
{}={}&a^2i^2\omega_{U^*}.\end{align*}\endgroup
The parameter $a$ is not zero. We have assumed that $\Gamma_{\phi_3}$ is identically zero. We conclude that $i=0$.

Ultimately, we prove that the elements 
\begin{align*}
x\phi_3={}& ax^*y^*+dy^{*(2)}+my^*z^*+ny^*w^*\\
z\phi_3={}&mx^*y^*+ey^{*(2)}+hy^*z^*+py^*w^*\\
w\phi_3={}&nx^*y^*+fy^{*(2)}+py^*z^*+ky^*w^*\end{align*}
of $D_2U^*$ are linearly dependent. We do this by showing that the maximal minors of the matrix 
$$\bmatrix
a&d&m&n\\
m&e&h&p\\
n&f&p&k\endbmatrix$$
are zero. We view the above matrix as the submatrix of the symmetric matrix
\begin{equation}\label{M}M=\bmatrix
a&d&m&n\\
d&0&e&f\\
m&e&h&p\\
n&f&p&k\endbmatrix,\end{equation} which is obtained by deleting row 2. 
In Lemma~\ref{7.2} we prove that
$$\Gamma_{\phi_3}(x^{(2)}y^{(2)}\t\omega_U)=-a\det M[2;2]\omega_{U^*}\quad \text{and}\quad
\Gamma_{\phi_3}(y^{(4)}\t\omega_U)=\det M\omega_{U^*},$$
where \begin{equation}\begin{array}{l}\text{$M[r_1,\dots, r_s;c_1,\dots,c_t]$ is the submatrix of $M$ obtained by deleting}\\\text{rows $r_1,\allowbreak\dots,\allowbreak  r_s$ and columns $c_1,\dots,c_t$.}\end{array}\label{M[}\end{equation} 

We have assumed that $\Gamma_{\phi_3}$ is identically zero and that $a$ is nonzero. We conclude that $M[2;2]$ and $M$ both have determinant zero. Observe that \begin{align}
\label{7.1.3}(\det M[2;1])^2={}& \det M[1;1]\det M[2;2]-\det M[1,2;1,2]\det M,\\
(\det M[2;3])^2={}& \det M[3;3]\det M[2;2]-\det M[2,3;2,3]\det M,\text{ and}\notag\\
(\det M[2;4])^2={}&\det M[4;4]\det M[2;2]-\det M[2,4;2,4]\det M.\notag\end{align}
One can check these formulas by hand or  one can use the characteristic free straightening technique of  
\cite{DEP} to verify these formulas; see Remark~\ref{DEP}. At any rate,
all four maximal minors of $M$ with row 2 deleted are zero;  $x\phi_3$, $z\phi_3$, $w\phi_3$ are  linearly dependent elements of $D_2U^*$; and $\ell\phi_3=0$ for some nonzero element $\ell$ of $\kk x\p\kk z\p\kk w$. This contradiction completes the proof. 
 \end{proof}

\begin{lemma}
\label{7.2} Let $\phi_3$ be the element of $D_3U^*$ which is given in {\rm(\ref{7.2.1})} with $i$  equal to zero and $M$ be the matrix of {\rm(\ref{M})}. Adopt the convention of {(\rm\ref{M[})}. Then 
\begin{enumerate}
[\rm (a)]
\item $\Gamma_{\phi_3}(x^{(2)}y^{(2)}\t\omega_U)=-a(\det M[2;2])\,\omega_{U^*}$ and
\item $\Gamma_{\phi_3}(y^{(4)}\t\omega_U)=(\det M)\,\omega_{U^*}$.
\end{enumerate}
\end{lemma}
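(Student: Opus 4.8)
\textbf{Proof proposal for Lemma~\ref{7.2}.}

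The plan is to evaluate both sides by specializing the explicit expansions of $\Gamma_{\phi_3}$ collected in Example~\ref{28.8}, once the ten second-order contractions $\ell_i\ell_j\phi_3\in U^*$ have been recorded for the particular $\phi_3$ of (\ref{7.2.1}) with $i=0$. These contractions are read off directly from the $\Sym_\bullet U$-module structure of $D_\bullet U^*$ recalled in Section~\ref{prelims}. The two structural facts that drive everything are: first, $x^2\phi_3$, $xz\phi_3$, $xw\phi_3$, $z^2\phi_3$, $zw\phi_3$, and $w^2\phi_3$ equal, respectively, $ay^*$, $my^*$, $ny^*$, $hy^*$, $py^*$, $ky^*$, so all six lie on the line $\kk y^*$; second, the coordinate vectors (in the dual basis $x^*,y^*,z^*,w^*$) of the remaining four contractions $xy\phi_3$, $y^2\phi_3$, $yz\phi_3$, $yw\phi_3$ are exactly the four rows of $M$ — equivalently, $M$ is the Gram matrix of the quadratic form $y\phi_3\in D_2U^*$, that is $M_{ij}=x_ix_j(y\phi_3)$ with $x_1,\dots,x_4=x,y,z,w$.

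For part~(a) I would take the Example~\ref{28.8} formula for $\Gamma_{\phi_3}(\ell_1^{(2)}\ell_2^{(2)}\t\ell_1\w\ell_2\w\ell_3\w\ell_4)$ with $(\ell_1,\ell_2,\ell_3,\ell_4)=(x,y,z,w)$, so that $\ell_1\w\ell_2\w\ell_3\w\ell_4=\omega_U$. Three of its four summands each contain two of $x^2\phi_3$, $xz\phi_3$, $xw\phi_3$ among their wedge factors; since these three elements are collinear with $y^*$, those three summands vanish. Only $x^2\phi_3\w xy\phi_3\w yz\phi_3\w yw\phi_3=ay^*\w xy\phi_3\w yz\phi_3\w yw\phi_3$ survives. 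Writing this wedge as the determinant of the $4\times4$ matrix whose rows are the coordinate vectors of $ay^*,\,xy\phi_3,\,yz\phi_3,\,yw\phi_3$ and expanding along the first row $(0,a,0,0)$, the cofactor is the $3\times3$ minor of $M$ obtained by deleting row~$2$ and column~$2$, carrying the sign $(-1)^{1+2}$; hence the value is $-a(\det M[2;2])\,\omega_{U^*}$. For part~(b) I would use the Example~\ref{28.8} identity $\Gamma_{\phi_3}(\ell_1^{(4)}\t\ell_1\w\ell_2\w\ell_3\w\ell_4)=\ell_1^2\phi_3\w\ell_1\ell_2\phi_3\w\ell_1\ell_3\phi_3\w\ell_1\ell_4\phi_3$ with $\ell_1=y$, $\ell_2=x$, $\ell_3=z$, $\ell_4=w$. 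Since $y\w x\w z\w w=-\,\omega_U$, linearity gives $\Gamma_{\phi_3}(y^{(4)}\t\omega_U)=-\,y^2\phi_3\w xy\phi_3\w yz\phi_3\w yw\phi_3$; the $4\times4$ matrix whose rows are the coordinate vectors of $y^2\phi_3,\,xy\phi_3,\,yz\phi_3,\,yw\phi_3$ is precisely $M$ with rows~$1$ and~$2$ interchanged, so its determinant is $-\det M$, and the two sign reversals cancel, yielding $(\det M)\,\omega_{U^*}$.

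Everything here is mechanical. The only point requiring genuine care is the sign bookkeeping — namely, the orientation of $\omega_U$ relative to the ordering $\ell_1\w\cdots\w\ell_4$ built into the Example~\ref{28.8} formulas, and the order in which the rows $\ell_i\ell_j\phi_3$ appear inside the $4\times4$ determinants relative to the row order of $M$. I would cross-check these against the analogous sign computations already carried out in the proof of Lemma~\ref{60.2}. I do not anticipate any real difficulty.
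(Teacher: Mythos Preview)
Your proposal is correct and follows essentially the same route as the paper: in both cases one isolates the single surviving wedge summand and reads it off as a $4\times 4$ determinant. The paper counts six raw summands for part~(a) (before dropping the two that contain $xy\phi_3\w xy\phi_3$), whereas you start from the already-simplified four-term formula of Example~\ref{28.8}; the surviving term $x^2\phi_3\w xy\phi_3\w yz\phi_3\w yw\phi_3$ and its evaluation coincide. Your Gram-matrix observation $M_{ij}=x_ix_j(y\phi_3)$ is a clean way to explain why the rows of $M$ appear, and your sign bookkeeping in~(b) is right.
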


\begin{proof} 
The expansion of  $\Gamma_{\phi_3}(x^{(2)}y^{(2)}\t\omega_U)$ consists of six summands; however five of the summands have a factor of $xy\phi_3\w xy\phi_3=0$ or have a factor from
$$\ts\bw^2\kk(x^2\phi_3,\,xz\phi_3,\,xw\phi_3)\subseteq \bw^2\kk y^*=0.$$ 
Thus,
\begingroup\allowdisplaybreaks
\begin{align*}
\Gamma_{\phi_3}(x^{(2)}y^{(2)}\t \omega_U)
{}={}&
x^2\phi_3\w xy\phi_3\w yz\phi_3\w yw\phi_3\\
{}={}&\begin{cases}
(ay^*)\w (ax^*+dy^*+mz^*+nw^*)\\\w (ey^*+hz^*+mx^*+pw^*)\w (fy^*+kw^*+nx^*+pz^*)
\end{cases}
\\
{}={}&-a\det \bmatrix a&m&n\\m&h&p\\n&p&k\endbmatrix.  
\intertext{We also compute}
\Gamma_{\phi_3}(y^{(4)}\t\omega_U)
{}={}&xy\phi_3\w y^2\phi_3\w yz\phi_3\w yw\phi_3\\
{}={}&\begin{cases}(ax^*+dy^*+mz^*+nw^*)\w (dw^*+ez^*+fw^*)\\\w (ey^*+hz^*+mx^*+pw^*)\w(fy^*+kw^*+nx^*+pz^*)\end{cases}\\
{}={}&\det\bmatrix a&d&m&n\\d&0&e&f\\m&e&h&p\\n&f&p&k\endbmatrix\omega_{U^*}.\end{align*}\endgroup
\end{proof}

\begin{remark}\label{DEP}
The identities of (\ref{7.1.3}) about the minors of the $4\times 4$ symmetric matrix $M$ of (\ref{M}) actually hold in a general situation. The matrix $M$ need not be symmetric and the size of $M$ need not be $4$. 

Recall the Pl\"ucker relations on the maximal minors of a matrix. 
Let $Y$ be an $r\times c$  matrix, with $r\le c$. The entries of $Y$ are in the commutative ring $R$. Let $a_1,\dots,a_{r-1}$ and $b_1,\dots,b_{r+1}$  be integers between $1$ and $c$ and $Y(s_1,\dots, s_\ell)$ represents the matrix
whose columns are the columns  $s_1,s_2,\dots,s_\ell$  of $Y$.
 Then
\begin{equation}\label{61.2.3}\sum_{i=1}^{r+1}\det Y(a_1,\dots,a_{r-1},b_i)\det Y(b_1,\dots,\widehat{b_i},\dots b_{r+1})=0,\end{equation}(where $\widehat{b_i}$ means that column $b_i$ has been deleted.)
Let $M$ be a $4\times 4$ matrix and $Y$ be the matrix
$$Y=[\begin{matrix} M|J\end{matrix}],$$ where $$J=\bmatrix
0&0&0&1\\
0&0&1&0\\
0&1&0&0\\
1&0&0&0\endbmatrix.$$ 
Apply (\ref{61.2.3}) to the matrix $Y=[M|J]$, where $M$ is given  in (\ref{M}). Take $$(\{a_1,a_1,a_3\},
\{b_1,b_2,b_3,b_4,b_5\})$$ to be
$$(\{2,3,4\},\{1,3,4,7,8\}),\ \
 (\{1,2,4\},\{1,3,4,6,7\}), \ \ \text{and}\ \  (\{1,2,3\},\{1,3,4,5,7\})$$ to obtain 
(\ref{7.1.3}).
\end{remark}

\section{The three variables Theorem.}\label{3var}
In this section we state and prove the three variable version of the Main Theorem (Theorem \ref{main}). Our precise formulation of the three variable version (see Lemma~\ref{lemma3}) is used in the inductive part of the proof of Theorem~\ref{main}. (See the case $r=0$ in Lemma~\ref{case3a}.)
Furthermore, we prove the three variable version using the same argument as we use for the four variable version; except there are fewer cases and each calculation is more straightforward. The reader might want to read the present section as a preparation for reading the proof of Theorem~\ref{main}.

\begin{theorem}
\label{main-3}Let $\kk$ be a field 
 and $A$ be a standard graded Artinian Gorenstein $\kk$-algebra of embedding dimension three and socle degree three.
If the characteristic of $\kk$ is different than two, then $A$ has the weak Lefschetz property. 
If the characteristic of $A$ is equal to two, then  
$A$ has the weak Lefschetz property
if and only if $A$ is not isomorphic to
\begin{equation}\label{1.1.1-3}\frac{\kk[x,y,z]}{(x^2,y^2,z^2)}.\end{equation}
\end{theorem}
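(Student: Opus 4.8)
\textbf{The plan} is to follow the four-variable proof step for step, with one fewer variable. First I would pass from the weak Lefschetz property to the homomorphism $\Gamma_{\phi_3}$ via the $d=3$ version of Lemma~\ref{28.10} (whose proof is verbatim the one already given): if $\phi_3\in D_3U^*$ is a Macaulay inverse system for $A$ and $\ell\phi_3\neq 0$ for every nonzero $\ell\in U$ --- which is precisely the assertion that $A$ has embedding dimension three --- then $A$ has the weak Lefschetz property if and only if $\Gamma_{\phi_3}$ is not identically zero. Two things then remain: to see that the algebra of (\ref{1.1.1-3}) fails the weak Lefschetz property in characteristic two, and to see that every other $\phi_3$, regardless of characteristic, has $\Gamma_{\phi_3}\neq 0$. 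The first is immediate: the Macaulay inverse system of $\kk[x,y,z]/(x^2,y^2,z^2)$ is $\phi_3=x^*y^*z^*$, and in characteristic two every nonzero linear form $\ell=ax+by+cz$ of $A$ satisfies $\ell^2=a^2x^2+b^2y^2+c^2z^2=0$, so $\mu_\ell\colon A_1\to A_2$ is never injective and no $\ell$ is a weak Lefschetz element.

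For the second, I would invoke the $d=3$ case of Lemma~\ref{61.1}: after a unit multiple and change of basis, $\phi_3$ has one of the forms (\ref{61.1.a})--(\ref{61.1.c}). Form (\ref{61.1.c}) with $d=3$ only allows the monomial $x^*y^*z^*$, so there $\phi_3=\a\,x^*y^*z^*$ with $\a\neq 0$, i.e.\ $A$ is the excluded algebra; hence this form does not occur once (\ref{1.1.1-3}) is set aside. For form (\ref{61.1.a}), write $\phi_3=x^{*(3)}+x^*\phi_{2,0}+\phi_{3,0}$ with $\phi_{2,0},\phi_{3,0}$ in the divided power algebra on $y^*,z^*$, and let $r$ be the rank of $p_{\phi_{2,0}}$, so $0\le r\le 2$. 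When $r=2$, the forms $xy\phi_3=y\phi_{2,0}$ and $xz\phi_3=z\phi_{2,0}$ span $\kk y^*\p\kk z^*$ while $x^2\phi_3=x^*$, so $\Gamma_{\phi_3}(x^{(3)}\t x\w y\w z)=x^*\w y\phi_{2,0}\w z\phi_{2,0}\neq 0$, exactly as in the case $r=3$ of Lemma~\ref{case3a}. When $r=1$, $\phi_{2,0}=ay^{*(2)}$ with $a\neq 0$, and I would argue as in Proposition~\ref{5.2}: supposing $\Gamma_{\phi_3}\equiv 0$, a short list of evaluations $\Gamma_{\phi_3}(X\t\omega_U)$ forces a handful of $2\times 2$ minors to vanish, making two of $y\phi_3,z\phi_3,w\phi_3$ dependent and contradicting the embedding-dimension hypothesis. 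When $r=0$, $\phi_3=x^{*(3)}+\phi_{3,0}$ with $\phi_{3,0}$ a binary cubic of embedding dimension two; here I would use the two-variable base case to produce $\theta\in D_2(\kk y\p\kk z)$ with $\Gamma_{\phi_{3,0}}(\theta\t y\w z)=y^*\w z^*$, whence $\Gamma_{\phi_3}(x\theta\t x\w y\w z)=x^2\phi_3\w\Gamma_{\phi_{3,0}}(\theta\t y\w z)=x^*\w y^*\w z^*\neq 0$, exactly as in the case $r=0$ of Lemma~\ref{case3a}. For form (\ref{61.1.b}), $\phi_3=x^{*(2)}y^*+x^*\phi_{2,0}+\phi_{3,0}$; as in Lemma~\ref{6.0} I would first use the vanishing of $\Gamma_{\phi_3}$ on a few elements to normalize $\phi_{2,0}$ and $\phi_{3,0}$, split into two sub-cases, and treat them as in Propositions~\ref{6.1} and~\ref{NYW}: supposing $\Gamma_{\phi_3}\equiv 0$, evaluate $\Gamma_{\phi_3}$ on enough elements (each value a small determinant in the coefficients), deduce that the maximal minors of an appropriate matrix vanish --- in one sub-case using the Pl\"ucker-relation identities as in Remark~\ref{DEP} --- and conclude that two of $x\phi_3,z\phi_3,w\phi_3$ are dependent, again impossible. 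In forms (\ref{61.1.a}) and (\ref{61.1.b}) the conclusion $\Gamma_{\phi_3}\neq 0$ holds in every characteristic, so those $A$ always have the weak Lefschetz property, which shows (\ref{1.1.1-3}) is indeed the only exception.

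The two-variable base case is short and is handled the same way: a nonzero binary cubic of embedding dimension two is, after a unit multiple and change of basis, either $y^{*(3)}+ay^*z^{*(2)}+bz^{*(3)}$ or $y^{*(2)}z^*+ay^*z^{*(2)}+bz^{*(3)}$ (form (\ref{61.1.c}) cannot occur when $d=2$), and in each case a single evaluation of $\Gamma$ on $y^{(2)}$ or on $yz$ yields a nonzero multiple of $y^*\w z^*$, using only that $a\neq 0$, or that $b\neq 0$, or the presence of the mixed term.

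\textbf{The hard part}, just as in four variables, will be the bookkeeping in the cases of form (\ref{61.1.a}) with $r=1$ and of form (\ref{61.1.b}): one must choose which few elements $X\in D_3U$ to feed to $\Gamma_{\phi_3}$ so that the resulting polynomial identities in the coefficients are strong enough, and then verify the identity expressing each relevant minor as a $P_0$-combination of those $\Gamma_{\phi_3}$-values --- the role played by the explicit lists in Propositions~\ref{5.2}, \ref{6.1}, \ref{NYW} and by the straightening/Pl\"ucker argument of Remark~\ref{DEP}. Everything here collapses one wedge-degree sooner than in four variables, so each individual computation is lighter, but the same care is needed to be sure the chosen evaluations actually pin down the minors.
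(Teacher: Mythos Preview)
Your plan is correct and follows the same strategy as the paper's own Lemma~\ref{lemma3}: reduce to $\Gamma_{\phi_3}$ via Lemma~\ref{28.10}, put $\phi_3$ into one of the forms of Lemma~\ref{61.1}, and in each case evaluate $\Gamma_{\phi_3}$ on a few well-chosen elements to force a nonzero $\ell$ with $\ell\phi_3=0$. The paper's execution is, however, more streamlined than your sketch. For form~(\ref{61.1.a}) it does not split on $r=\rank p_{\phi_{2,0}}$ at the outset; instead it computes $\Gamma_{\phi_3}(x^{(3)}\otimes\omega_U)=a(df-e^2)\omega_{U^*}$, which already disposes of $r=2$ and, under the contradiction hypothesis $\Gamma_{\phi_3}\equiv 0$, lets Lemma~\ref{3.3} normalize $\phi_{2,0}$ to a single term $dx^*y^{*(2)}$; the remaining split is simply $d\neq 0$ versus $d=0$, and the $d=0$ case is handled by the three direct evaluations $\Gamma_{\phi_3}(xy^{(2)})$, $\Gamma_{\phi_3}(xz^{(2)})$, $\Gamma_{\phi_3}(xyz)$ rather than by inducting on a two-variable base case (though your inductive step, once unwrapped, is exactly that computation). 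For form~(\ref{61.1.b}) there is no split into two sub-cases: the first evaluation $\Gamma_{\phi_3}(x^{(3)}\otimes\omega_U)=-b^2f\omega_{U^*}$ already kills the $x^*z^{*(2)}$ coefficient, so only the Proposition~\ref{NYW} analogue survives, and the paper finishes with a single $3\times 3$ symmetric matrix and the Pl\"ucker identities. Finally, watch your variable count: there is no $w$ in three variables, so in your $r=1$ case the correct conclusion is $z\phi_3=0$ outright (not a dependence among three elements), and in form~(\ref{61.1.b}) it is $x\phi_3$ and $z\phi_3$ that become dependent.
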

\begin{proof}
It is clear that the ring  $A$ of (\ref{1.1.1-3}) does not satisfy the weak Lefschetz property. Indeed, if $\ell$ is a nonzero linear form of $A$, then $\ell^2=0$ in $A$; so, in particular, $\ell:A_1\to A_2$ is not injective.
The Macaulay inverse system for $A$ is $\phi_3=x^*y^*z^*$.

To complete the proof of Theorem~\ref{main-3},  we adopt Data~\ref{28.4}, with $d=3$, and use the method of the proof of Theorem~\ref{main}. Let $U=A_1$ and $\phi_3\in D_3U^*$ be a Macaulay inverse system for $A$. Assume that $\ell \phi_3$ is nonzero for all $\ell$ in $U$. Assume also that either the characteristic of $\kk$ is not two or that the characteristic of $\kk$ equals two but there does not equal a basis $x^*,y^*,z^*$ for $U^*$ with $\phi_3=x^*y^*z^*$. In Lemma~\ref{lemma3}, we prove that $\Gamma_{\phi_3}\neq 0$. Apply
Lemma~\ref{28.10} to complete the proof.   
\end{proof}

\begin{lemma}
\label{lemma3} Let $\kk$ be a field, $U$ be a three-dimensional vector space over $\kk$, and $\phi_3$ be an element of $D_3U^*$. Assume
\begin{enumerate}[\rm(a)]
\item  either the characteristic of $\kk$ is not two, or   the characteristic of $\kk$ is equal to two but $\phi_3\neq x^*y^*z^*$for any basis $x^*$, $y^*$, $z^*$ of $U^*$; and
\item $\ell\phi_3$ is nonzero for every nonzero $\ell$ in $U$.
\end{enumerate}
Then $\Gamma_{\phi_3}$ is not identically zero.
\end{lemma}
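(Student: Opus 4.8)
The plan is to imitate the proof of Lemma~\ref{ML} (carried out in~\ref{proof of ML}) in the easier case $d=3$. Neither the hypotheses nor the conclusion of Lemma~\ref{lemma3} is affected by replacing $\phi_3$ with a nonzero scalar multiple or by a change of basis of $U^*$, so Lemma~\ref{61.1} with $d=3$ lets me assume that $\phi_3$ equals $x^{*(3)}+x^*\phi_{2,0}+\phi_{3,0}$ (form~(a)), or $x^{*(2)}y^*+x^*\phi_{2,0}+\phi_{3,0}$ (form~(b)), or --- only when $\kk$ has characteristic two --- $a\,x^*y^*z^*$ (form~(c)), where $x^*,y^*,z^*$ is a basis of $U^*$ and $\phi_{i,0}\in D_i(\kk y^*\p\kk z^*)$. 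Form~(c) is disposed of immediately: hypothesis~(b) forces $a\neq0$, and then $\phi_3=(ax^*)y^*z^*$ has exactly the shape forbidden by hypothesis~(a); so form~(c) never arises, and this is where the characteristic-two exception of Theorem~\ref{main-3} gets absorbed. (Heuristically, ``$\Gamma_{\phi_3}$ identically zero'' is the characteristic-free incarnation of ``the Hessian of the ternary cubic dual to $\phi_3$ vanishes identically'', which should force that cubic to be a cone, i.e.\ $\ell\phi_3=0$ for some $\ell$; the single exception is the triangle $x^*y^*z^*$ in characteristic two, whose Hessian is $2x^*y^*z^*$.)

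For forms~(a) and~(b) I would argue by contradiction, assuming $\Gamma_{\phi_3}$ is identically zero and producing a nonzero $\ell\in U$ with $\ell\phi_3=0$, which contradicts hypothesis~(b). Fix $\omega_U=x\w y\w z$ and $\omega_{U^*}=x^*\w y^*\w z^*$; since $\bw^3U$ and $\bw^3U^*$ are one-dimensional, each $\Gamma_{\phi_3}(\theta\t\omega_U)$ is a scalar multiple of $\omega_{U^*}$, so the whole analysis comes down to evaluating $\Gamma_{\phi_3}$ on monomials of $D_3U$ (via the formulas of Example~\ref{28.8}) and tracking coefficients. In form~(a) put $r=\rank p_{\phi_{2,0}}\in\{0,1,2\}$ (Lemma~\ref{55.Claim 3}). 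When $r=2$ the quadratic $\phi_{2,0}$ is nondegenerate on the two-dimensional $U_0=\kk y\p\kk z$, so $y\phi_{2,0}$ and $z\phi_{2,0}$ are linearly independent; since $x\cdot x^{*(3)}=x^{*(2)}$ and $x\phi_{2,0}=0$, one computes $x^2\phi_3=x^*$ and hence $\Gamma_{\phi_3}(x^{(3)}\t\omega_U)=x^*\w y\phi_{2,0}\w z\phi_{2,0}\neq0$ --- the analogue of the case $r=d$ in Lemma~\ref{case3a}, requiring no contradiction argument at all. When $r=0$ we have $\phi_3=x^{*(3)}+\phi_{3,0}$ with $\phi_{3,0}$ a binary form on $\kk y^*\p\kk z^*$; hypothesis~(b) then says $\ell_0\phi_{3,0}\neq0$ for every nonzero $\ell_0\in U_0$, and the two-variable version of Lemma~\ref{lemma3} --- obtained by one more application of Lemma~\ref{61.1}, now with $d=2$, where forms~(a),(b) always give $\Gamma\neq0$ and form~(c) is empty --- supplies $\theta_0\in D_2U_0$ with $\Gamma_{\phi_{3,0}}(\theta_0\t(y\w z))=y^*\w z^*$; then exactly as in Lemma~\ref{case3a}, $\Gamma_{\phi_3}(x\theta_0\t\omega_U)=x^*\w y^*\w z^*\neq0$. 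The subcase $r=1$ is genuinely computational and runs parallel to Proposition~\ref{5.2}: after normalizing $\phi_{2,0}=az^{*(2)}$ with $a\neq0$ (Lemma~\ref{55.Claim 3}), evaluate $\Gamma_{\phi_3}$ on a short list of $D_3U$-monomials (many wedge summands vanish because $xy\phi_3=0$ and $xz\phi_3$ lies in a one-dimensional subspace), and deduce from the resulting polynomial identities that two of $x\phi_3,y\phi_3,z\phi_3$ are linearly dependent, hence $\ell\phi_3=0$ for some nonzero $\ell$.

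Form~(b) --- which is exactly the case that is unavoidable in characteristic three --- is the remaining one, treated by the method of Propositions~\ref{6.1} and~\ref{NYW}, and I expect it to be the main obstacle. Here $\phi_3=x^{*(2)}y^*+x^*\phi_{2,0}+\phi_{3,0}$ with $\phi_{2,0},\phi_{3,0}$ supported on $y^*,z^*$. Assuming $\Gamma_{\phi_3}\equiv0$, I would first use vanishing of $\Gamma_{\phi_3}$ on $x^{(3)}$ (and possibly one or two further monomials), together with Lemma~\ref{3.3}, to bring $\phi_3$ into a reduced shape analogous to~(\ref{60.0.1}) --- in three variables this reduction in the spirit of Lemma~\ref{6.0} is lighter, there being no fourth basis vector, and the two subcases of~\ref{Two Cases} may well merge --- and then evaluate $\Gamma_{\phi_3}$ on a small set of $D_3U$-monomials to obtain polynomial relations among the coefficients of $\phi_3$. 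Feeding these through Pl\"ucker-type identities on the coefficient matrix of $\{x\phi_3,z\phi_3\}$ (or of $\{x\phi_3,y\phi_3,z\phi_3\}$), in the style of Remark~\ref{DEP} and the proofs of Propositions~\ref{6.1},~\ref{NYW}, should force all the pertinent maximal minors to vanish, yielding a nonzero $\ell$ with $\ell\phi_3=0$. The hard parts will be choosing a test set of monomials small enough to be manageable yet large enough that the ideal of relations it generates really forces the linear dependence, and confirming that no analogue of the characteristic-two exception re-enters at this stage (it cannot, having been quarantined as form~(c)); together with setting up the two-variable base case cleanly for the $r=0$ subcase, this is where the real work lies, everything else being bookkeeping and determinant evaluations strictly simpler than their four-variable counterparts.
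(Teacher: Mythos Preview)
Your proposal is correct and close in spirit to the paper's proof, but your decomposition of form~(a) differs. You pre-split by $r=\rank p_{\phi_{2,0}}\in\{0,1,2\}$, mirroring the four-variable architecture (Lemma~\ref{case3a} for the extreme ranks, a Proposition-\ref{5.2}-style computation for $r=1$, and an inductive appeal to a two-variable lemma for $r=0$). The paper instead writes $\phi_3$ with generic parameters (\ref{8.1.2}), computes $\Gamma_{\phi_3}(x^{(3)}\t\omega_U)=a(df-e^2)\omega_{U^*}$ to force $df=e^2$ under the standing assumption $\Gamma_{\phi_3}\equiv0$, applies Lemma~\ref{3.3} to make $e=f=0$ (so $r\le1$ \emph{a posteriori}), and then branches only on $d\ne0$ versus $d=0$. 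In the $d=0$ branch the paper simply evaluates $\Gamma_{\phi_3}$ on $xy^{(2)}$, $xz^{(2)}$, $xyz$ to get the three $2\times2$ minors of $\begin{bmatrix} g&h&i\\ h&i&j\end{bmatrix}$ directly---these are exactly the computations your two-variable lemma would encapsulate, so your induction is sound but the paper inlines it. Your $r=2$ case is just the contrapositive of the paper's first step. For form~(b) your sketch matches the paper's argument (Lemmas~\ref{8.4}--\ref{8.5} plus a Pl\"ucker identity on the $3\times3$ symmetric matrix~(\ref{M2})), which is lighter than the four-variable Propositions you cite: only four $\Gamma$-evaluations are needed, and there is no analogue of the case split in~\ref{Two Cases}. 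The upshot is that your route works and generalizes more transparently, while the paper's is a bit shorter because three variables allow the rank reduction to fall out of a single $\Gamma$-value rather than requiring a separate trichotomy.
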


\begin{proof}
The proof is by contradiction. We assume that $\Gamma_{\phi_3}$ is identically zero. 
There are two cases. Either $\phi_3$ has the form of (\ref{61.1.a}) or (\ref{61.1.b}) from Lemma~\ref{61.1}.

We first assume that $\phi_3$ has the form of Lemma~\ref{61.1}.(\ref{61.1.a}).
In other words, there is a basis $x^*,y^*,z^*$ for $U^*$ so that  $\phi_3=ax^{*(3)}+x^*\phi_{2,0}+\phi_{3,0}$ with $a\in \kk$, $a\neq 0$, and  $\phi_{i,0}\in D_i\kk(y^*,z^*)$. Thus, 
 there are parameters $a,b,\dots,j$, in $\kk$,  with $a\neq 0$, such that
\begin{equation}\label{8.1.2}\phi_3=\begin{cases}
\phantom{+}ax^{*(3)}+dx^*y^{*(2)}+ex^*y^*z^*+fx^*z^{*(2)}\\+gy^{*(3)}+hy^{*(2)}z^*+iy^*z^{*(2)}+jz^{*(3)}.\end{cases}
\end{equation}
Let $x,y,z$ be the basis for $U$ which is dual to the basis $x^*,y^*,z^*$ for $U^*$. Let $\omega_U$ and $\omega_{U^*}$ be the bases $x\w y\w z$ and $x^*\w y^*\w z^*$ for $\bw^3U$ and $\bw^3 U^*$, respectively.

  Observe that \begin{align*}
\Gamma_{\phi_3}(x^{(3)}\t \omega_U)={}&x^2\phi_3\w xy\phi_3\w xz\phi_3\\
{}={}&ax^2\w(dy^*+ez^*)\w(ey^*+fz^*)\\
{}={}&a(df-e^2)\omega_{U^*}.\end{align*}
The assumption that $\Gamma_{\phi_3}$ is identically zero, combined with the hypothesis that $a\neq 0$ yields that $df-e^2=0$.

Apply Lemma~\ref{3.3} and change the basis for $\kk y^*\p \kk z^*$ in order to
write 
$$dx^*y^{*(2)}+ex^*y^*z^*+fx^*z^{*(2)}$$ in the form  $dx^*Y^{*(2)}$ with $e$ and $f$ equal to zero for some new basis $Y^*,Z^*$ for  $\kk y^*\p \kk z^*$.  

In Lemma~\ref{8.3} we show that when $\phi_3$ is given in (\ref{8.1.2}) with $e=f=0$, then 
\begingroup\allowdisplaybreaks\begin{align*}
\Gamma_{\phi_3}(y^{(3)}\t \omega_U)={}&-d^2i\omega_{U^*},\\
\Gamma_{\phi_3}(x^{(2)}z\t \omega_U)={}&adj\omega_{U^*}, \text{ and}\\
\Gamma_{\phi_3}(xy^{(2)}\t \omega_U)={}&a(gi-h^2)\omega_{U^*}.\end{align*}\endgroup
Recall our assumption that $\Gamma_{\phi_3}$ is identically zero and our local hypothesis that $a\neq 0$. If $d\neq0$, then $i=j=h=0$, 
$$\phi_3=
ax^{*(3)}
+dx^*y^{*(2)}
+gy^{*(3)},
$$
(because  $e$ and $f$  continue to be zero), and  
$z\phi_3=0$, which is a contradiction. Thus, $d=0$.

In Lemma~\ref{8.3}, we show that
\begingroup\allowdisplaybreaks
\begin{align*} 
\Gamma_{\phi_3}(xy^{(2)}\t \omega_U)={}&a(gi-h^2)\omega_{U^*}, \\ 
\Gamma_{\phi_3}(xz^{(2)}\t \omega_U)={}&a(hj-i^2)\omega_{U^*}, \text{ and}\\ 
\Gamma_{\phi_3}(xyz\t \omega_U)={}&a(gj-hi)\omega_{U^*}. \end{align*}\endgroup
Continue to assume that $\Gamma_{\phi_3}=0$, 
 $a\neq 0$, but $d=e=f=0$. It follows that  
 $h^2=gi$, $hi=gj$, $hj=i^2$, and 
$$\bmatrix g&h&i\\h&i&j\endbmatrix$$ has rank at most one.
Hence some nonzero linear combination of 
$$y\phi_3=gy^{*(2)}+hy^*z^*+iz^{*(2)}\quad\text{and}\quad z\phi_3=hy^{*(2)}+iy^*z^*+jz^{*(2)}$$
 is zero, which is a contradiction. This contradiction establishes Lemma~\ref{lemma3} for $\phi_3$ described in Lemma~\ref{61.1}.(\ref{61.1.a}).

\bs Now we assume that $\phi_3$ has the form of Lemma~\ref{61.1}.(\ref{61.1.b}). In other words, there is a basis $x^*$, $y^*$, $z^*$ for $U^*$ so that 
$\phi_3= bx^{*(2)}y^*+x\phi_{2,0}+\phi_{3,0}$ with $\phi_{i,0}$ in $D_i\kk(y^*,z^*)$,  $b\in\kk$, and $b\neq 0$. Thus,   there are parameters $a,b,\dots,j$, with $b\neq 0$,  such that
\begin{equation}\label{8.2.2}\phi_3=\begin{cases}
\phantom{+}bx^{*(2)}y^*+dx^*y^{*(2)}+ex^*y^*z^*+fx^*z^{*(2)}\\+gy^{*(3)}+hy^{*(2)}z^*+iy^*z^{*(2)}+jz^{*(3)}.\end{cases}
\end{equation}
It is shown in Lemma~\ref{8.4} 
 that $$\Gamma_{\phi_3}(x^{(3)}\t \omega_{U})=-b^2f \omega_{U^*}\quad\text{and}\quad \Gamma_{\phi_3}(x^{(2)}z\t \omega_{U})=-b^2j \omega_{U^*}.$$ Combine the assumption that $\Gamma_{\phi_3}$ is identically zero with the local hypothesis that $b\neq 0$ in order to conclude that $f=j=0$.
It  is shown in Lemma~\ref{8.5}  
that when $f=0$, then 
$$\Gamma_{\phi_3}(x^{(2)}y\t \omega_U)=b(e^2-bi)\omega_{U^*}\quad\text{and}\quad
\Gamma_{\phi_3}(y^{(3)}\t \omega_U)=(\det M) \,\omega_{U^*},$$ where 
\begin{equation}\label{M2}M=\bmatrix 
b&d&e\\d&g&h\\e&h&i\endbmatrix.\end{equation}
The assumption that $\Gamma_{\phi_3}$ is identically zero, together with the local hypothesis that $b\neq 0$, yields that 
 $e^2-bi=0$ and $\det M=0$.
It follows that all three maximal minors of 
$M$ with row two deleted are zero. This conclusion follows from the technique of (\ref{61.2.3}) applied to the matrix
$$
\bmatrix 
b&d&e&0&0&1\\d&g&h&0&1&0\\e&h&i&1&0&0\endbmatrix.$$
Take columns $\{2,3\}$ to be $\{a_1,\dots,a_{r-1}\}$ and columns $\{1,3,5,6\}$
to be $\{b_1,\dots,\allowbreak b_{r+1}\}$ to see that
$$\det \bmatrix d&e\\h&i\endbmatrix 
\det \bmatrix d&h\\e&i\endbmatrix+ i\det M -\det \bmatrix g&h\\h&i\endbmatrix 
\det \bmatrix b&e\\e&i\endbmatrix=0.$$ The expressions $e^2-bi$ and $\det M$ are zero; hence, $di-he$ is also zero. Take 
$\{1,2\}$ to be $\{a_1,\dots,a_{r-1}\}$ and columns $\{1,3,4,5\}$
to be $\{b_1,\dots,b_{r+1}\}$ to see that
$$\det \bmatrix b&d\\e&h\endbmatrix 
\det \bmatrix b&e\\d&h\endbmatrix+ b\det M -\det \bmatrix b&d\\d&g\endbmatrix 
\det \bmatrix b&e\\e&i\endbmatrix=0;$$hence $bh-de=0$.
At any rate, the matrix
$$\bmatrix 
b&d&e\\e&h&i\endbmatrix$$ has rank at most one; hence
\begin{align*}x\phi_3={}&bx^*y^*+dy^{*(2)}+ey^*z^*\text{ and}\\
z\phi_3={}&ex^*y^*+hy^{*(2)}+iy^*z^*\end{align*}
are linearly dependent. This  contradicts the hypothesis that $\ell\phi_3$ is nonzero for all nonzero $\ell$ in $U$. 
\end{proof}

\begin{lemma}
\label{8.3} If $\phi_3$ is given by 
{\rm (\ref{8.1.2})},
with $e=f=0$, then
\begin{enumerate}[\rm(a)]
\item\label{8.3.a}$\Gamma_{\phi_3}(y^{(3)}\t \omega_U)=-d^2i\omega_{U^*}$,
\item\label{8.3.b}$\Gamma_{\phi_3}(x^{(2)}z\t \omega_U)=adj\omega_{U^*}$,
\item\label{8.3.c}$\Gamma_{\phi_3}(xy^{(2)}\t \omega_U)=a(gi-h^2)\omega_{U^*}$,
\item\label{8.3.d}$\Gamma_{\phi_3}(xz^{(2)}\t \omega_U)=a(hj-i^2)\omega_{U^*}$, and 
\item\label{8.3.e}$\Gamma_{\phi_3}(xyz\t \omega_U)=a(gj-hi)\omega_{U^*}$.
\end{enumerate}
\end{lemma}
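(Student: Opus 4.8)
The plan is to follow the computational template already used for Lemma~\ref{l5.2} and Lemma~\ref{calculation}: first record all double contractions $\ell\ell'\phi_3\in U^*$ with $\ell,\ell'$ running over the basis $x,y,z$ of $U$, then substitute them into the explicit $d=3$ formulas for $\Gamma_{\phi_3}$ listed in Example~\ref{28.8}. Setting $e=f=0$ makes $\phi_3$ sparse, so nearly every summand in those formulas will drop out.

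First I would compute the contractions. From $\phi_3=ax^{*(3)}+dx^*y^{*(2)}+gy^{*(3)}+hy^{*(2)}z^*+iy^*z^{*(2)}+jz^{*(3)}$ one finds $x\phi_3=ax^{*(2)}+dy^{*(2)}$, $y\phi_3=dx^*y^*+gy^{*(2)}+hy^*z^*+iz^{*(2)}$, and $z\phi_3=hy^{*(2)}+iy^*z^*+jz^{*(2)}$, and hence $x^2\phi_3=ax^*$, $xy\phi_3=dy^*$, $xz\phi_3=0$, $y^2\phi_3=dx^*+gy^*+hz^*$, $yz\phi_3=hy^*+iz^*$, $z^2\phi_3=iy^*+jz^*$. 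The two facts that collapse the expansions are $xz\phi_3=0$ and the vanishing of any wedge containing a repeated factor.

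Next, for each of (a)--(e), I would permute $\omega_U=x\w y\w z$ so that the cubed (resp.\ squared) variable sits in the first slot, apply the matching identity of Example~\ref{28.8}, discard every summand carrying $xz\phi_3$ or a repeated wedge factor, and expand the surviving $3$-fold wedge against $\omega_{U^*}=x^*\w y^*\w z^*$. For example, for (a), $\Gamma_{\phi_3}(y^{(3)}\t\omega_U)=-\,y^2\phi_3\w xy\phi_3\w yz\phi_3=-\,(dx^*+gy^*+hz^*)\w dy^*\w(hy^*+iz^*)=-d^2i\,\omega_{U^*}$; parts (b)--(e) are handled the same way, and in (e) the ``$2(\dots)$'' term of the last $d=3$ formula of Example~\ref{28.8} also vanishes since it carries the factor $xz\phi_3=0$.

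The hard part will be nothing but bookkeeping: tracking the sign picked up when $\omega_U$ is reordered into the form required by Example~\ref{28.8}, and carrying out the small $3\times3$ wedge (equivalently, determinant) expansions without slips. No conceptual ingredient is needed, and there is no splitting on the characteristic.
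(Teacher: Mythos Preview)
Your proposal is correct and follows essentially the same computation as the paper. The only cosmetic difference is that you route through the canonical formulas of Example~\ref{28.8} (hence the small sign when reordering $\omega_U$), whereas the paper applies the definition of $\Gamma_{\phi_3}$ directly to $\omega_U=x\wedge y\wedge z$, which already produces the wedges in the right order and avoids any sign bookkeeping; the surviving terms and the final determinant evaluations are identical in both write-ups.
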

\begin{proof}
(\ref{8.3.a}) We compute 
\begingroup\allowdisplaybreaks\begin{align*}
\Gamma_{\phi_3}(y^{(3)}\t \omega_U)={}&xy\phi_3\w w^2\phi_3\w yz\phi_3\\
{}={}&dy^*\w (dx^*+gy^*+hz^*)\w(hy^*+iz^*)\\
{}={}&-d^2i\omega_{U^*}.\end{align*}\endgroup

\ms\noindent (\ref{8.3.b})
The expansion of $\Gamma_{\phi_3}(x^{(2)}z\t \omega_U)$ has three summands; however two of the summands have a factor of $xz\phi_3=0$; hence
\begingroup\allowdisplaybreaks\begin{align*}\Gamma_{\phi_3}(x^{(2)}z\t \omega_U)={}&
x^2 \phi_3\w xy\phi_3\w z^2\phi_3\\
{}={}&
ax^*\w dy^*\w(iy^*+jz^*)\\
{}={}&adj\omega_{U^*}.\end{align*}\endgroup

\ms\noindent (\ref{8.3.c}) 
The expansion of $\Gamma_{\phi_3}(xy^{(2)}\t \omega_U)$ has three summands; however two of the summands have a factor of $xz\phi_3=0$ or $xy\phi_3\w xy\phi_3=0$; hence 
\begingroup\allowdisplaybreaks\begin{align*}\Gamma_{\phi_3}(xy^{(2)}\t \omega_U)={}&x^2\phi_3\w y^2\phi_3\w yz\phi_3\\
{}={}&ax^*\w (dx^*+gy^*+hz^*)\w (hy^*+iz^*)\\
{}={}&a(gi-h^2)\omega_{U^*}.
\end{align*}\endgroup

\ms\noindent (\ref{8.3.d})
The expansion of $\Gamma_{\phi_3}(xz^{(2)}\t \omega_U)$ has three summands; however two of the summands have a factor of $xz\phi_3=0$; hence 
\begingroup\allowdisplaybreaks\begin{align*}\Gamma_{\phi_3}(xz^{(2)}\t \omega_U)={}&x^2\phi_3\w yz\phi_3\w z^2\phi_3\\
{}={}&ax^*\w (hy^*+iz^*)\w(iy^*+jz^*)\\
{}={}&a(hj-i^2)\omega_{U^*}.
\end{align*}\endgroup

\ms\noindent (\ref{8.3.e})
The expansion of $\Gamma_{\phi_3}(xyz\t \omega_U)$ has six summands; however five of the summands have a factor of $xz\phi_3=0$, $yz\phi_3\w yz\phi_3=0$, or $xy\phi_3\w xy\phi_3=0$; hence 
\begingroup\allowdisplaybreaks\begin{align*}\Gamma_{\phi_3}(xyz\t \omega_U)={}&x^2\phi_3\w y^2\phi_3\w z^2\phi_3\\
{}={}&ax^*\w (dx^*+gy^*+hz^*)\w(iy^*+jz^*)\\
{}={}&a(gj-hi)\omega_{U^*}.\end{align*}
\endgroup
\end{proof}

\begin{lemma}
\label{8.4}
 If $\phi_3$ has the form of {\rm (\ref{8.2.2})},
then $$\Gamma_{\phi_3}(x^{(3)}\t \omega_{U})=-b^2f \omega_{U^*}\quad\text{and}\quad \Gamma_{\phi_3}(x^{(2)}z\t \omega_{U})=-b^2j \omega_{U^*}.$$\end{lemma}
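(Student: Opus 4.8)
The plan is to compute both quantities directly from the $d=3$ formulas recorded in Example~\ref{28.8}, using nothing beyond the module structure of $D_\bullet U^*$ over $\Sym_\bullet U$ recalled in Section~\ref{prelims} (that is, $m'(m)^* = (m/m')^*$ when $m'\mid m$, and $0$ otherwise). Throughout, $\omega_U = x\w y\w z$ and $\omega_{U^*} = x^*\w y^*\w z^*$, as in the proof of Lemma~\ref{lemma3}.

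First I would read off from (\ref{8.2.2}) the contractions of $\phi_3$ that appear, namely
\[x^2\phi_3 = by^*,\qquad xy\phi_3 = bx^*+dy^*+ez^*,\qquad xz\phi_3 = ey^*+fz^*,\]
\[z^2\phi_3 = fx^*+iy^*+jz^*,\qquad yz\phi_3 = ex^*+hy^*+iz^*;\]
each of these is immediate. The feature that makes the computation short is that $x^2\phi_3$ is a scalar multiple of $y^*$ alone, so in every wedge product below that factor annihilates all monomials containing $y^*$.

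For the first identity, Example~\ref{28.8} with $(\ell_1,\ell_2,\ell_3)=(x,y,z)$ gives $\Gamma_{\phi_3}(x^{(3)}\t\omega_U) = x^2\phi_3\w xy\phi_3\w xz\phi_3 = by^*\w(bx^*+dy^*+ez^*)\w(ey^*+fz^*)$. Only the $x^*\w z^*$ component of the product of the last two factors survives the wedge with $by^*$, giving $b^2f\,(y^*\w x^*\w z^*) = -b^2f\,\omega_{U^*}$. For the second identity I would write $\omega_U = -\,x\w z\w y$ so that Example~\ref{28.8} applies with $(\ell_1,\ell_2,\ell_3)=(x,z,y)$: $\Gamma_{\phi_3}(x^{(2)}z\t\omega_U) = -\big(x^2\phi_3\w z^2\phi_3\w xy\phi_3 + x^2\phi_3\w xz\phi_3\w yz\phi_3\big)$. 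Substituting the contractions above, the leading factor $by^*$ kills every summand involving $y^*$, so each $3\times 3$ wedge collapses to a single $2\times 2$ determinant in the coefficients; the two contributions work out to $(b^2j-bef)\,\omega_{U^*}$ and $bef\,\omega_{U^*}$, and once the cross terms cancel one is left with $-b^2j\,\omega_{U^*}$.

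There is no genuine obstacle here; the only thing to watch is the sign bookkeeping arising from $\omega_U = -\,x\w z\w y$ and from reordering $y^*\w x^*\w z^*$ and $y^*\w z^*\w x^*$ into $\omega_{U^*} = x^*\w y^*\w z^*$. This is entirely parallel to the computations in Lemma~\ref{calculation} and Lemma~\ref{8.3}, so I would present it in the same telegraphic style: name the nonvanishing wedge summand(s), substitute the contractions, and expand.
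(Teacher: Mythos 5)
Your proposal is correct, and it is essentially the same computation the paper carries out: both proofs read off the contractions $x^2\phi_3$, $xy\phi_3$, $xz\phi_3$, $z^2\phi_3$, $yz\phi_3$, discard the summand in which $xz\phi_3$ appears twice (it vanishes), and substitute; the only cosmetic difference is that you invoke Example~\ref{28.8} with $(\ell_1,\ell_2,\ell_3)=(x,z,y)$ and absorb the sign from $\omega_U=-\,x\w z\w y$, whereas the paper expands $\Delta(x^{(2)}z)$ directly for $\omega_U=x\w y\w z$, so no further comment is needed.
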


\begin{proof} We compute
\begingroup\allowdisplaybreaks
\begin{align*}\Gamma_{\phi_3}(x^{(3)}\t \omega_{U})={}&x^2\phi_3\w xy\phi_3\w xz\phi_3\\
{}={}&by^*\w (bx^*+dy^*+ez^*)\w (ey^*+fz^*)\\
{}={}&-b^2f\omega_{U^*}.
\end{align*}\endgroup The expansion of $\Gamma_{\phi_3}(x^{(2)}z\t \omega_{U})$
has three summands; however one of the summands has a factor of $xz\phi_3\w xz\phi_3=0$. Thus,   \begingroup\allowdisplaybreaks  
\begin{align*}\Gamma_{\phi_3}(x^{(2)}z\t \omega_{U})={}&
x^2\phi_3\w yz\phi_3\w xz\phi_3+x^2\phi_3\w xy\phi_3\w z^2\phi_3\\
{}={}&\begin{cases}\phantom{+}by^*\w (ex^*+hy^*+iz^*)\w(ey^*+fz^*)\\+by^*\w (bx^*+dy^*+ez^*)\w(fx^*+iy^*+jz^*)\end{cases}\\
{}={}&-bef\omega_{U^*}-b(bj-ef)\omega_{U^*}\\
{}={}&-b^2j\omega_{U^*}.\end{align*}\endgroup
\vskip-24pt\end{proof}

\begin{lemma}
\label{8.5}
 If $\phi_3$ has the form of {\rm (\ref{8.2.2})}, with $f=0$, 
then$$\Gamma_{\phi_3}(x^{(2)}y\t \omega_U)=b(e^2-bi)\omega_{U^*}\quad\text{and}\quad
\Gamma_{\phi_3}(y^{(3)}\t \omega_U)=(\det M)\, \omega_{U^*},$$
where $M$ is the matrix of {\rm(\ref{M2})}.
\end{lemma}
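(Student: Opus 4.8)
The plan is to read off both values of $\Gamma_{\phi_3}$ directly from the three-variable formulas recorded in Example~\ref{28.8}, after first tabulating the products $\ell\ell'\phi_3$ for the pairs of basis elements of $U$ that occur. Using the action of $\Sym_\bullet U$ on $D_\bullet U^*$ and the hypothesis $f=0$, each of the following is a one-line computation:
$$x^2\phi_3=by^*,\qquad xy\phi_3=bx^*+dy^*+ez^*,\qquad xz\phi_3=ey^*,$$
$$y^2\phi_3=dx^*+gy^*+hz^*,\qquad yz\phi_3=ex^*+hy^*+iz^*,\qquad z^2\phi_3=iy^*+jz^*.$$

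For the first identity I would apply the two-summand formula of Example~\ref{28.8} with $\ell_1=x$, $\ell_2=y$, $\ell_3=z$, so that
$$\Gamma_{\phi_3}(x^{(2)}y\t\omega_U)=x^2\phi_3\w y^2\phi_3\w xz\phi_3+x^2\phi_3\w xy\phi_3\w yz\phi_3.$$
The first summand is $by^*\w y^2\phi_3\w ey^*=0$ because $y^*$ is repeated. In the second summand the leading factor $y^*$ annihilates the $y^*$-components of the remaining factors, leaving
$$by^*\w(bx^*+ez^*)\w(ex^*+iz^*)=b(bi-e^2)\,y^*\w x^*\w z^*=b(e^2-bi)\,\omega_{U^*},$$
which is the asserted value.

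For the second identity, since $\omega_U=x\w y\w z=y\w z\w x$ (a cyclic, hence even, reordering), the cubic formula of Example~\ref{28.8} with $\ell_1=y$, $\ell_2=z$, $\ell_3=x$ gives
$$\Gamma_{\phi_3}(y^{(3)}\t\omega_U)=y^2\phi_3\w yz\phi_3\w xy\phi_3=xy\phi_3\w y^2\phi_3\w yz\phi_3.$$
Plugging in the three vectors from the table, the matrix of their coordinates relative to $x^*,y^*,z^*$ is exactly the symmetric matrix $M$ of (\ref{M2}); hence the wedge equals $(\det M)\,\omega_{U^*}$.

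The argument has no genuine obstacle: it is entirely parallel to, and shorter than, the computations in Lemmas~\ref{8.3} and~\ref{8.4}. The only thing to watch is the sign incurred when a wedge of basis covectors is reordered to match the fixed basis $\omega_{U^*}=x^*\w y^*\w z^*$ (as in $y^*\w x^*\w z^*=-\omega_{U^*}$), and, relatedly, the recognition of the $3\times 3$ determinant in the second computation.
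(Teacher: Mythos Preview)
Your proof is correct and follows essentially the same approach as the paper. The only cosmetic difference is that for $\Gamma_{\phi_3}(y^{(3)}\t\omega_U)$ you reorder $\omega_U$ cyclically so as to invoke the $\ell_1^{(3)}$ formula of Example~\ref{28.8}, whereas the paper reads off $xy\phi_3\w y^2\phi_3\w yz\phi_3$ directly from the definition via $\Delta(y^{(3)})=y\t y\t y$; likewise the paper counts three summands for $x^{(2)}y$ (including the one with repeated factor $xy\phi_3\w xy\phi_3$) before discarding two, while you start from the two-summand form in Example~\ref{28.8}.
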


\begin{proof} The expansion of
 $\Gamma_{\phi_3}(x^{(2)}y\t \omega_U)$ has three summands. Two of the summands have a factor of $xy\phi_3\w xy\phi_3=0$ or $x^2\phi_3\w xz\phi_3\in \bw^2\kk y^*=0$. Thus,
\begin{align*}\Gamma_{\phi_3}(x^{(2)}y\t \omega_U)={}
&x^2\phi_3\w xy\phi_3\w yz\phi_3\\
{}={}&by^*\w(bx^*+dy^*+ez^*)\w (ex^*+hy^*+iz^*)\\
{}={}&b(e^2-bi)\omega_{U^*}.\end{align*}We compute
\begin{align*}\Gamma_{\phi_3}(y^{(3)}\t \omega_U)={}&
xy\phi_3\w y^2\phi_3\w yz\phi_3\\
{}={}&
(bx^*+dy^*+ez^*)\w(dx^*+gy^*+hz^*)\w (ex^*+hy^*+iz^*)\\ 
{}={}&
\left|\begin{matrix} b&d&e\\d&g&h\\e&h&i\end{matrix}\right|\omega_{U^*}.\end{align*}
\vskip-24pt\end{proof}


\begin{thebibliography}{99}





\bibitem{AA}N.~Abdallah, N.~Altafi, P.~De Poi, L.~Fiorindo, A.~Iarrobino, P.~Marques, E.~Mezzetti, R.~Mir\'o-Roig, and L.~Nicklasson {\em Hilbert functions and Jordan type of Perazzo Artinian algebras}, {\tt arXiv:2303.16768}




\bibitem{ABW}  K.~Akin, D.~Buchsbaum, and J.~Weyman, {\em Schur functors and Schur complexes}, Adv. in Math. {\bf 44} (1982),  207--278.

\bibitem{B} R.~B{\o}gvad, {\em 
Gorenstein rings with transcendental Poincar\'e series},
Math. Scand. {\bf 53} (1983),  5--15.


\bibitem{BK11}  H.~Brenner and A.~Kaid, {\em A note on the weak Lefschetz property of monomial complete intersections in positive characteristic}, Collect. Math. {\bf 62} (2011),  85--93.   

\bibitem{BFP22} D.~Bricalli, F.~Favale, and G.~Pirola, {\em A theorem of Gordan and Noether via Gorenstein rings}, {\tt arXiv:2201.07550}  

\bibitem{DEP} C.~De Concini, D.~Eisenbud, and C.~Procesi, {\em Young diagrams and determinantal varieties}, Invent. Math. {\bf56} (1980),  129--165.


\bibitem{Eis} D.~Eisenbud, {\em Commutative algebra with a view toward algebraic geometry} Graduate Texts in Mathematics, {\bf 150}. Springer-Verlag, New York, 1995. 

\bibitem{EKK1} S.~El Khoury and A.~Kustin, {\em Artinian Gorenstein algebras with linear resolutions}, J. Algebra {\bf 420} (2014), 402--474. 

\bibitem{EKK2} S.~El Khoury and A.~Kustin, {\em The explicit minimal resolution constructed from a Macaulay inverse system}, J. Algebra {\bf 440} (2015), 145--186. 


\bibitem{EKK3}
S.~El Khoury and A.~Kustin, {\em The structure of Gorenstein-linear resolutions of Artinian algebras}, J. Algebra {\bf 453} (2016), 492--560. 



\bibitem{qp} S.~El Khoury and A.~Kustin, {\em Quadratically presented grade three Gorenstein ideals}, J. Algebra {\bf 622} (2023), 258--290. 



\bibitem{FMM23} L.~Fiorindo, E.~Mezzetti, R.~Mir\o-Roig, {\em
Perazzo 3-folds and the weak Lefschetz property}, 
J. Algebra {\bf 626} (2023), 56--81. 




\bibitem{GZ19}R.~Gondim and G.~Zappal\`a, {\em On mixed Hessians and the Lefschetz properties}, J. Pure Appl. Algebra {\bf 223} (2019),  4268--4282. 

\bibitem{GN76} P.~Gordan and M.~Noether, {\em
\"Uber die algebraischen Formen, deren Hesse'sche Determinante identisch verschwindet}, 
Math. Ann. {\bf 10} (1876),  547--568. 

\bibitem{M2}D.~Grayson and M.~Stillman,
          {\em Macaulay2, a software system for research in algebraic geometry},
          {Available at \tt{https://math.uiuc.edu/Macaulay2/}}

\bibitem{GL} T.~Gulliksen and G.~Levin, {\em Homology of Local Rings}, Queen’s Papers in Pure and Applied Mathematics, vol.20, Queen’s University, Kingston, Ont., 1969.




\bibitem{H51} O.~Hesse, {\em \"Uber die Bedingung, unter welche eine 
homogene ganze Function von n unabh\"angigen
Variabeln durch Line\"are Substitutionen von n andern unabh\"angigen Variabeln auf eine homogene
Function sich zur\"uckf\"uhren l\"a{\ss}t, die eine Variable weniger enth\"alt}, J. Reine Angew. Math. {\bf 42} (1851), 117--124.




\bibitem{H59} O. Hesse, {\em Zur Theorie der ganzen homogenen Functionen}, J. Reine Angew. Math. 56 (1859) 263--269.


\bibitem{K08} A.~Kustin, {\em An explicit, characteristic-free, equivariant homology equivalence between Koszul complexes}, Comm. Algebra {\bf 36} (2008),  3263--3316.

\bibitem{KSV18}  A.~Kustin, L.~{\c S}ega, and A.~Vraciu, {\rm Poincar\'e series of compressed local Artinian rings with odd top socle degree}, J. Algebra {\bf 505} (2018), 383--419.


\bibitem{KV14} A.~Kustin and A.~Vraciu, {\em The weak Lefschetz property for monomial complete intersection in positive characteristic}, Trans. Amer. Math. Soc. {\bf 366} (2014),  4571--4601.




\bibitem{MW09}  T.~Maeno and J.~Watanabe, {\em Lefschetz elements of Artinian Gorenstein algebras and Hessians of homogeneous polynomials}, Illinois J. Math. {\bf 53} (2009),  591--603.

\bibitem{Nor}  D.~G.~Northcott, {\em Multilinear algebra}, Cambridge University Press, Cambridge, 1984. 

\bibitem{Per00} U.~Perazzo, {\em Sulle variet\`a cubiche la cui hessiana svanisce identicamente}, G.
Mat. Battaglini {\bf 38} (1900), 337--354.


\bibitem{RRR91}L.~Reid, L.~Roberts, and M.~Roitman, {\em On complete intersections and their Hilbert functions}
Canad. Math. Bull. {\bf 34} (1991),  525--535.



\bibitem{RS14} M.~Rossi and L.~{\c S}ega, {\em
Poincar\'e series of modules over compressed Gorenstein local rings}, 
Adv. Math. {\bf 259} (2014), 421--447.



\bibitem{S80}R.~Stanley, {\em
Weyl groups, the hard Lefschetz theorem, and the Sperner property},
SIAM J. Algebraic Discrete Methods {\bf 1} (1980),  168--184. 




\bibitem{W87}  J.~Watanabe, {\em The Dilworth number of Artinian rings and finite posets with rank function}, Commutative algebra and combinatorics (Kyoto, 1985), 303--312, Adv. Stud. Pure Math., {\bf 11}, North-Holland, Amsterdam, 1987.

\bibitem{W00}J.~Watanabe, {\em A remark on the Hessian of homogeneous polynomials}, The
Curves Seminar at Queen’s, Volume XIII, Queen’s Papers in Pure and
Appl. Math., no. 119, pp. 171--178, 2000. 

\bibitem{WdeB20}J.~Watanabe and M.~de Bondt,   {\em
On the theory of Gordan-Noether on homogeneous forms with zero Hessian},  Polynomial rings and affine algebraic geometry, 73--107,
Springer Proc. Math. Stat., {\bf 319}, Springer, Cham, (2020).  




\end{thebibliography}
\end{document}